\def\blackbox#1#2#3#4#5{
  \pgfgetlastxy{\llx}{\lly}
  \path #1;
  \pgfgetlastxy{\w}{\h}
  \pgfmathsetlengthmacro{\urx}{\llx+\w}
  \pgfmathsetlengthmacro{\ury}{\lly+\h}
  \draw (\llx,\lly) rectangle (\urx,\ury);
  \pgfmathsetlengthmacro{\xave}{(\llx+\urx)/2}
  \pgfmathsetlengthmacro{\yave}{\ury-8}
  \node at (\xave,\yave) {#4};
  \pgfmathsetlengthmacro{\ydiff}{\ury-\lly}
  \pgfmathsetlengthmacro{\lstep}{\ydiff/(#2+1)}
  \pgfmathsetlengthmacro{\rstep}{\ydiff/(#3+1)}
  \ifnum #2=0{}\else{ 
   \foreach \l in {1,...,#2}{
    \draw [->] ($(\llx,\lly)+(-#5/2,0)+\l*(0,\lstep)$) -- ($(\llx,\lly)+(#5/2,0)+\l*(0,\lstep)$);}}\fi
  \ifnum #3=0{}\else{
   \foreach \r in {1,...,#3}{
    \draw [->] ($(\urx,\ury)+(-#5/2,0)-\r*(0,\rstep)$) -- ($(\urx,\ury)+(#5/2,0)-\r*(0,\rstep)$);}}\fi
}
\def\dashblackbox#1#2#3#4#5{
  \pgfgetlastxy{\llx}{\lly}
  \path #1;
  \pgfgetlastxy{\w}{\h}
  \pgfmathsetlengthmacro{\urx}{\llx+\w}
  \pgfmathsetlengthmacro{\ury}{\lly+\h}
  \draw [dashed] (\llx,\lly) rectangle (\urx,\ury);
  \pgfmathsetlengthmacro{\xave}{(\llx+\urx)/2}
  \pgfmathsetlengthmacro{\yave}{\ury-8}
  \node at (\xave,\yave) {#4};
  \pgfmathsetlengthmacro{\ydiff}{\ury-\lly}
  \pgfmathsetlengthmacro{\lstep}{\ydiff/(#2+1)}
  \pgfmathsetlengthmacro{\rstep}{\ydiff/(#3+1)}
  \ifnum #2=0{}\else{ 
   \foreach \l in {1,...,#2}{
    \draw [->] ($(\llx,\lly)+(-#5/2,0)+\l*(0,\lstep)$) -- ($(\llx,\lly)+(#5/2,0)+\l*(0,\lstep)$);}}\fi
  \ifnum #3=0{}\else{
   \foreach \r in {1,...,#3}{
    \draw [->] ($(\urx,\ury)+(-#5/2,0)-\r*(0,\rstep)$) -- ($(\urx,\ury)+(#5/2,0)-\r*(0,\rstep)$);}}\fi
}
\def\directarc#1#2{
  \path #1;
  \pgfgetlastxy{\lx}{\ly}
  \path #2;
  \pgfgetlastxy{\rx}{\ry}
  \pgfmathsetlengthmacro{\xave}{(\lx+\rx)/2}
  \draw #1 .. controls (\xave,\ly) and (\xave,\ry) .. #2;
}
\def\loopright#1#2#3{
  \path #1;
  \pgfgetlastxy{\ux}{\uy}
  \path #2;
  \pgfgetlastxy{\lx}{\ly}
  \pgfmathsetlengthmacro{\maxx}{max(\ux,\lx)}
  \pgfmathsetlengthmacro{\farx}{\maxx+#3}
  \draw #1 .. controls (\farx,\uy) and (\farx,\ly) .. #2;
}
\def\loopleft#1#2#3{
  \path #1;
  \pgfgetlastxy{\ux}{\uy}
  \path #2;
  \pgfgetlastxy{\lx}{\ly}
  \pgfmathsetlengthmacro{\minx}{min(\ux,\lx)}
  \pgfmathsetlengthmacro{\farx}{\minx-#3}
  \draw #1 .. controls (\farx,\uy) and (\farx,\ly) .. #2;
}
\newcommand{\comment}[1]{}
\def\tn{\textnormal}
\def\mc{\mathcal}
\def\NN{{\mathbb N}}
\def\Hom{\tn{Hom}}
\def\Aut{\tn{Aut}}
\def\Ob{\tn{Ob}}
\def\SEL*{\tn{SEL*}}
\def\hsp{\hspace{.3in}}
\def\singleton{{\{*\}}}
\def\LoopSchema{{\parbox{.5in}{\fbox{\xymatrix{\LMO{s}\ar@(l,u)[]^f}}}}}
\def\to{\rightarrow}
\def\taking{\colon}
\def\pls{``\!+\!"}
\def\inj{\hookrightarrow}
\def\too{\longrightarrow}
\def\ss{\subseteq}
\def\iso{\cong}
\def\|{{\;|\;}}
\def\m1{{-1}}
\def\ol{\overline}
\def\ul{\underline}
\def\qeq{\mathop{=}^?}
\newcommand{\LMO}[1]{\stackrel{#1}{\bullet}}
\def\ullimit{\ar@{}[rd]|(.3)*+{\lrcorner}}
\def\urlimit{\ar@{}[ld]|(.3)*+{\llcorner}}
\def\lllimit{\ar@{}[ru]|(.3)*+{\urcorner}}
\def\lrlimit{\ar@{}[lu]|(.25)*+{\ulcorner}}
\def\ulhlimit{\ar@{}[rd]|(.3)*+{\diamond}}
\def\urhlimit{\ar@{}[ld]|(.3)*+{\diamond}}
\def\llhlimit{\ar@{}[ru]|(.3)*+{\diamond}}
\def\lrhlimit{\ar@{}[lu]|(.3)*+{\diamond}}
\newcommand{\clabel}[1]{\ar@{}[rd]|(.5)*+{#1}}
\newcommand{\TriRight}[7]{\xymatrix{#1\ar[dr]_{#2}\ar[rr]^{#3}&&#4\ar[dl]^{#5}\\&#6\ar@{}[u] |{\Longrightarrow}\ar@{}[u]|>>>>{#7}}}
\newcommand{\TriLeft}[7]{\xymatrix{#1\ar[dr]_{#2}\ar[rr]^{#3}&&#4\ar[dl]^{#5}\\&#6\ar@{}[u] |{\Longleftarrow}\ar@{}[u]|>>>>{#7}}}
\newcommand{\TriIso}[7]{\xymatrix{#1\ar[dr]_{#2}\ar[rr]^{#3}&&#4\ar[dl]^{#5}\\&#6\ar@{}[u] |{\Longleftrightarrow}\ar@{}[u]|>>>>{#7}}}
\newcommand{\arr}[1]{\ar@<.5ex>[#1]\ar@<-.5ex>[#1]}
\newcommand{\arrr}[1]{\ar@<.7ex>[#1]\ar@<0ex>[#1]\ar@<-.7ex>[#1]}
\newcommand{\arrrr}[1]{\ar@<.9ex>[#1]\ar@<.3ex>[#1]\ar@<-.3ex>[#1]\ar@<-.9ex>[#1]}
\newcommand{\arrrrr}[1]{\ar@<1ex>[#1]\ar@<.5ex>[#1]\ar[#1]\ar@<-.5ex>[#1]\ar@<-1ex>[#1]}
\newcommand{\To}[1]{\xrightarrow{#1}}
\newcommand{\Too}[1]{\xrightarrow{\ \ #1\ \ }}
\newcommand{\From}[1]{\xleftarrow{#1}}
\def\qeq{\stackrel{?}{=}}
\newcommand{\Adjoint}[4]{\xymatrix@1{{#2}\ar@<.5ex>[r]^-{#1} &{#3} \ar@<.5ex>[l]^-{#4}}}
\def\id{\tn{id}}
\def\List{\tn{List}}
\def\Sets{{\bf Sets}}
\def\Set{{\bf Set}}
\def\bhline{\Xhline{2\arrayrulewidth}}
\def\bbhline{\Xhline{2.5\arrayrulewidth}}
\def\colim{\mathop{\tn{colim}}}
\def\mcE{\mc{E}}
\def\mcO{\mc{O}}
\def\mcP{\mc{P}}
\def\mcT{\mc{T}}
\def\mcW{\mc{W}}
\def\monOb{{\blacktriangle}}
\def\Fin{{\bf Fin}}
\newcommand{\inp}[1]{{\tt in}(#1)}
\newcommand{\outp}[1]{{\tt out}(#1)}
\def\vset{{\tt vset}}
\newcommand{\vLst}[1]{\ol{#1}}
\newcommand{\Strm}[1]{{\tn{Strm}(#1)}}
\newcommand{\Hist}{\tn{Hist}}
\newcommand{\strst}[1]{\big|_{[1,#1]}} 
\newcommand{\Del}[1]{DN_{#1}}
\newcommand{\Dem}[1]{{Dm_{#1}}}
\newcommand{\Sup}[1]{{Sp_{#1}}}
\newcommand{\inDem}[1]{{in\Dem{#1}}}
\newcommand{\inSup}[1]{{in\Sup{#1}}}
\newcommand{\vinSup}[1]{\vLst{\inSup{#1}}}
\newcommand{\vinDem}[1]{\vLst{\inDem{#1}}}
\newcommand{\vSup}[1]{\vLst{\Sup{#1}}}
\newcommand{\vDem}[1]{\vLst{\Dem{#1}}}
\newcommand{\vDel}[1]{\vLst{\Del{#1}}}
\newcommand{\vinp}[1]{\vLst{\inp{#1}}}
\newcommand{\voutp}[1]{\vLst{\outp{#1}}}
\def\zipwith{\;\raisebox{3pt}{${}_\varcurlyvee$}\;}
\def\tbzipwith{\!\;\raisebox{2pt}{${}_\varcurlyvee$}\;\!}
\newcommand{\ffootnote}[2]{\hspace{#1}\footnote{#2}}
\newcommand{\disc}[1]{{\ul{#1}}}
\let\c@figure\c@equation\makeatother 
\newtheorem{theorem}[subsubsection]{Theorem}
\newtheorem{lemma}[subsubsection]{Lemma}
\newtheorem{proposition}[subsubsection]{Proposition}
\theoremstyle{remark}
\newtheorem{remark}[subsubsection]{Remark}
\newtheorem{example}[subsubsection]{Example}
\newtheorem{warning}[subsubsection]{Warning}
\newtheorem{question}[subsubsection]{Question}
\newtheorem{guess}[subsubsection]{Guess}
\theoremstyle{definition}
\newtheorem{definition}[subsubsection]{Definition}
\newtheorem{ann}[subsubsection]{Announcement}
\newenvironment{announcement}{\begin{ann}}{\hspace*{\fill}$\lozenge$\end{ann}}
\begin{document}

\title[The operad of temporal wiring diagrams]{The operad of temporal wiring diagrams: formalizing a graphical language for discrete-time processes}

\author{Dylan Rupel}

\address{Department of Mathematics, Northeastern University, Boston, MA 02115}

\email{d.rupel@neu.edu}

\author{David I. Spivak}

\address{Department of Mathematics, Massachusetts Institute of Technology, Cambridge MA 02139}

\email{dspivak@math.mit.edu}

\thanks{Spivak acknowledges support by ONR grant N000141310260.}

\begin{abstract}

We investigate the hierarchical structure of processes using the mathematical theory of operads. Information or material enters a given process as a stream of inputs, and the process converts it to a stream of outputs. Output streams can then be supplied to other processes in an organized manner, and the resulting system of interconnected processes can itself be considered a macro process. To model the inherent structure in this kind of system, we define an operad $\mcW$ of black boxes and directed wiring diagrams, and we define a $\mcW$-algebra $\mcP$ of processes (which we call propagators, after \cite{RS}). Previous operadic models of wiring diagrams (e.g. \cite{Sp2}) use undirected wires without length, useful for modeling static systems of constraints, whereas we use directed wires with length, useful for modeling dynamic flows of information. We give multiple examples throughout to ground the ideas.

\end{abstract}

\maketitle

\tableofcontents

\section{Introduction}

Managing processes is inherently a hierarchical and self-similar affair. Consider the case of preparing a batch of cookies, or if one prefers, the structurally similar case of manufacturing a pharmaceutical drug. To make cookies, one generally follows a recipe, which specifies a process that is undertaken by subdividing it as a sequence of major steps. These steps can be performed in series or in parallel. The notion of self-similarity arises when we realize that each of these major steps can itself be viewed as a process, and thus it can also be subdivided into smaller steps. For example, procuring the materials necessary to make  cookies involves getting oneself to the appropriate store, selecting the necessary materials, paying for them, etc., and each of these steps is itself a simpler process. 

Perhaps every such hierarchy of nesting processes must touch ground at the level of atomic detail. Hoping that the description of processes within processes would not continue {\em ad infinitum} may have led humanity to investigate matter and motion at the smallest level possible. This investigation into atomic and quantum physics has yielded tremendous technological advances, such as the invention of the microchip. 

Working on the smallest possible scale is not always effective, however. It appears that the planning and execution of processes benefits immensely from hierarchical chunking. To write a recipe for cookies at the level of atomic detail would be expensive and useless. Still, when executing our recipe, the decision to add salt will initiate an unconscious procedure, by which signals are sent from the brain to the muscles of the arm, on to individual cells, and so on until actual atoms move through space and ``salt has been added". Every player in the larger cookie-making endeavor understands the current demand (e.g. to add salt) as a procedure that makes sense at his own level of granularity. The decision to add salt is seen as a mundane (low-level) job in the context of planning to please ones girlfriend by baking cookies; however this same decision is seen as an abstract (high-level) concept in the context of its underlying performance as atomic movements.

For designing complex processes, such as those found in manufacturing automobiles or in large-scale computer programming, the architect and engineers must be able to change levels of abstraction with ease. In fact, different engineers working on the same project are often thinking about the same basic structures, but in different terms. They are most effective when they can chunk the basic structures as they see fit. 

A person who studies a supply chain in terms of the function played by each {\em chain member} should be able to converse coherently with a person who studies the same supply chain in terms of the contracts and negotiations that exist at each {\em chain link}. These are two radically different viewpoints on the same system, and it is useful to be able to switch fluidly between them. Similarly, an engineer designing a system's hardware must be able to converse with an engineer working on the system's software. Otherwise, small perturbations made by one of them will be unexpected by the other, and this can lead to major problems.

The same types of issues emerge whether one is concerned with manufacturers in a supply chain, neurons in a functional brain region, modules in a computer program, or steps in a recipe. In each case, what we call {\em propagators} (after \cite{RS}) are being arranged into a system that is itself a propagator at a higher level. The goal of this paper is to provide a mathematical basis for thinking about this kind of problem. We offer a formalism that describes the hierarchical and self-similar nature of a certain kind of wiring diagram. 

A similar kind of wiring diagram was described in \cite{Sp2}, the main difference being that the present one is built for time-based processes whereas the one in \cite{Sp2} was built for static relations. In the present work we take the notion of time (or one may say distance) seriously. We go through considerable effort to integrate a notion of time and distance into the fundamental architecture of our description, by emphasizing that communication channels have a length, i.e. that communication takes time.

Design choices such as these greatly affect the behavior of our model, and ours was certainly not the only viable choice. We hope that the basic idea we propose will be a basis upon which future engineers and mathematicians will improve. For the time being, we may at least say that the set of rules we propose for our wiring diagrams roughly conform with the IDEF0 standard set by the National Institute of Standards and Technology \cite{NIST}. The main differences are that in our formalism, 
\begin{itemize}
\item wires can split but not merge (each merging must occur within a particular box), 
\item feedback loops are allowed, 
\item the so-called control and mechanism arrows are subsumed into input and output arrows, and 
\item the rules for and meaning of hierarchical composition is made explicit. 
\end{itemize}

The basic picture to have in mind for our wiring diagrams is the following:
\begin{align}\label{dia:cookies}
 \begin{tikzpicture}
  \path (0,0);
  \blackbox{(7.5,3)}{5}{2}{}{0.5}
  \draw (-0.25,0.5) node[left] {eggs};
  \draw (-0.25,1) node[left] {milk};
  \draw (-0.25,1.5) node[left] {salt};
  \draw (-0.25,2) node[left] {sugar};
  \draw (-0.25,2.5) node[left] {flour};
  \draw (3,2.5) node{dry mix};
  \draw (3,1.4) node{wet mix};
  \draw (3,.4) node{egg yolks};
  \draw (7.75,1) node[right] {egg yolks};
  \draw (7.75,2) node[right] {cookie batter};
  \path (1,1.75);
  \blackbox{(1,1)}{3}{1}{}{0.5}
  \path (1,0.25);
  \blackbox{(1,1)}{2}{2}{}{0.5}
  \path (5.5,1.25);
  \blackbox{(1,1)}{2}{1}{}{0.5}
  \directarc{(0.25,0.5)}{(0.75,0.58)}
  \directarc{(0.25,1)}{(0.75,0.92)}
  \directarc{(0.25,1.5)}{(0.75,2)}
  \directarc{(0.25,2)}{(0.75,2.25)}
  \directarc{(0.25,2.5)}{(0.75,2.5)}
  \directarc{(2.25,0.58)}{(7.25,1)}
  \directarc{(2.25,0.92)}{(5.25,1.58)}
  \directarc{(2.25,2.25)}{(5.25,1.92)}
  \directarc{(6.75,1.75)}{(7.25,2)}
 \end{tikzpicture}
\end{align}
In this picture we see an exterior box, some interior boxes, and a collection of directed wires. These directed wires transport some type of product from the export region of some box to the import region of some box. In (\ref{dia:cookies}) we have a supply chain involving three propagators, one of whom imports flour, sugar, and salt and exports dry mixture, and another of whom imports eggs and milk and exports egg yolks and wet mixture. The dry mixture and the wet mixture are then transported to a third propagator who exports cookie batter. The whole system itself constitutes a propagator that takes five ingredients and produces cookie batter and egg yolks. 

The formalism we offer in this paper is based on a mathematical structure called an {\em operad} (more precisely, a {\em symmetric colored operad}), chosen because they capture the self-similar nature of wiring diagrams. The rough idea is that if we have a wiring diagram and we insert wiring diagrams into each of its interior boxes, the result is a new wiring diagram.
\begin{align}
 \begin{tikzpicture}
  \path (0,0);
  \blackbox{(4,3)}{4}{5}{}{0.5}
  \path (1,1.75);
  \dashblackbox{(1.75,1)}{2}{2}{}{0.5}
  \directarc{(0.25,1.8)}{(0.75,2.08)}
  \directarc{(0.25,2.4)}{(0.75,2.42)}
  \directarc{(3,2.08)}{(3.75,2)}
  \directarc{(3,2.42)}{(3.75,2.5)}
  \path (1.5,2.125);
  \blackbox{(0.5,0.5)}{2}{1}{}{0.25}
  \directarc{(1.25,2.08)}{(1.375,2.2916)}
  \directarc{(1.25,2.42)}{(1.375,2.4584)}
  \directarc{(2.125,2.375)}{(2.5,2.42)}
  \filldraw[black] (1.75,1.9325) circle (2pt);
  \directarc{(1.25,2.08)}{(1.65,1.9325)}
  \directarc{(1.65,1.9325)}{(1.7,1.9325)}
  \directarc{(1.75,1.9325)}{(2.5,2.08)}
  \path (1,0.25);
  \dashblackbox{(2.25,1.25)}{3}{2}{}{0.5}
  \directarc{(0.25,0.6)}{(0.75,0.5625)}
  \directarc{(0.25,1.2)}{(0.75,0.875)}
  \directarc{(0.25,1.8)}{(0.75,1.1875)}
  \directarc{(3.5,0.6666)}{(3.75,0.5)}
  \directarc{(3.5,0.6666)}{(3.75,1)}
  \directarc{(3.5,1.0833)}{(3.75,1.5)}
  \path (2,0.375);
  \blackbox{(0.5,0.375)}{2}{1}{}{0.25}
  \directarc{(1.25,0.5625)}{(1.875,0.5)}
  \directarc{(2.625,0.5575)}{(3,0.6666)}
  \loopright{(2.375,1.125)}{(2,0.875)}{7}
  \loopleft{(2,0.875)}{(1.875,0.625)}{7}
  \path (1.75,1);
  \blackbox{(0.5,0.375)}{2}{2}{}{0.25}
  \directarc{(1.25,0.875)}{(1.625,1.125)}
  \directarc{(1.25,1.1875)}{(1.625,1.25)}
  \directarc{(2.375,1.25)}{(3,1.0833)}
  \draw [->,snake=snake] (4.5,1.5) -- (5.5,1.5);
  \path (6,0);
  \blackbox{(4,3)}{4}{5}{}{0.5}
  \path (7.5,2.125);
  \blackbox{(0.5,0.5)}{2}{1}{}{0.25}
  \directarc{(6.25,1.8)}{(7.375,2.2916)}
  \directarc{(6.25,2.4)}{(7.375,2.4584)}
  \directarc{(8.125,2.375)}{(9.75,2.5)}
  \filldraw[black] (7.75,1.9325) circle (2pt);
  \directarc{(6.25,1.8)}{(7.65,1.9325)}
  \directarc{(7.65,1.9325)}{(7.7,1.9325)}
  \directarc{(7.75,1.9325)}{(9.75,2)}
  \path (8,0.375);
  \blackbox{(0.5,0.375)}{2}{1}{}{0.25}
  \directarc{(6.25,0.6)}{(7.875,0.5)}
  \directarc{(8.625,0.5575)}{(9.75,0.5)}
  \directarc{(8.625,0.5575)}{(9.75,1)}
  \loopright{(8.375,1.125)}{(8,0.875)}{7}
  \loopleft{(8,0.875)}{(7.875,0.625)}{7}
  \path (7.75,1);
  \blackbox{(0.5,0.375)}{2}{2}{}{0.25}
  \directarc{(6.25,1.2)}{(7.625,1.125)}
  \directarc{(6.25,1.8)}{(7.625,1.25)}
  \directarc{(8.375,1.25)}{(9.75,1.5)}
 \end{tikzpicture}
\end{align}
We will make explicit what constitutes a box, what constitutes a wiring diagram (WD), and how inserting WDs into a WD constitutes a new WD. Like Russian dolls, we may have a nesting of WDs inside of WDs inside of WDs, etc. We will prove an associativity law that guarantees that no matter how deeply our Russian dolls are nested, the resulting WD is well-defined. Once all this is done, we will have an operad $\mcW$.

To make this directed wiring diagrams operad $\mcW$ useful, we will take our formalism to the next logical step and provide an {\em algebra} on $\mcW$. This algebra $\mcP$ encodes our application to process management by telling us what fits in the boxes and how to use wiring diagrams to build more complex systems out of simpler components. More precisely, the algebra $\mcP$ makes explicit 
\begin{itemize}
\item the set of things that can go in every box, namely the set of propagators, and
\item a method for taking a wiring diagram and a propagator for each of its interior boxes and producing a propagator for the exterior box. 
\end{itemize}
To prove that we have an algebra, we will show that no matter how one decides to group the various internal propagators, the behavior of the resulting system is unchanged.

Operads were invented in the 1970s by \cite{May} and \cite{BV} in order to encode the relationship between various operations they noticed taking place in the mathematical field of  algebraic topology. At the moment we are unconcerned with topological properties of our operads, but the formalism grounds the picture we are trying to get across. For more on operads, see \cite{Lei}.

\subsection{Structure of the paper}

In Section \ref{sec:operad} we discuss operads. In Section \ref{sec:operad def} we give the mathematical definition of operads and some examples. In Section \ref{sec:operad announcements} we propose the operad of interest, namely $\mcW$, the operad of directed wiring diagrams. We offer an example wiring diagram in Section \ref{sec:ground W} that will run throughout the paper and eventually output the Fibonacci sequence. In Section \ref{sec:operad requirements} we prove that $\mcW$ has the required properties so that it is indeed an operad. 

In Section \ref{sec:algebra} we discuss algebras on an operad. In Section \ref{sec:algebra def} we give the mathematical definition of algebras. In Sections \ref{sec:lists streams props} we discuss some preliminaries on lists and define our notion of historical propagators, which we will then use in \ref{sec:algebra announcements} where we propose the $\mcW$-algebra  of interest, the algebra of propagators. In Section \ref{sec:algebra requirements} we prove that $\mcP$ has the required properties so that it is indeed a $\mcW$-algebra. 

We expect the majority of readers to be most interested in the running examples sections, Sections \ref{sec:ground W} and \ref{sec:ground P}. Readers who want more details, e.g. those who may wish to write code for propagators, will need to read Sections \ref{sec:operad announcements}, \ref{sec:algebra announcements}. The proof that our algebra satisfies the necessary requirements is technical; we expect only the most dedicated readers to get through it. Finally, in Section \ref{sec:future work} we discuss some possibilities for future work in this area.

The remainder of the present section is devoted to our notational conventions (Section \ref{sec:notation}) and our acknowledgments (\ref{sec:acknowledgements}).

\subsection{Notation and background}\label{sec:notation}

Here we describe our notational conventions. These are only necessary for readers who want a deep understanding of the underlying mathematics. Such readers are assumed to know some basic category theory. For mathematicians we recommend \cite{Awo} or \cite{Mac}, for computer scientists we recommend \cite{Awo} or \cite{BW}, and for a general audience we recommend \cite{Sp1}. 

We will primarily be concerned only with the category of small sets, which we denote by $\Set$, and some related categories. We denote by $\Fin\ss\Set$ the full subcategory spanned by finite sets. We often use the symbol $n\in\Ob(\Fin)$ to denote a finite set, and may speak of elements $i\in n$. The cardinality of a finite set is a natural number, denoted $|n|\in\NN$. In particular, we consider $0$ to be a natural number.

Suppose given a finite set $n$ and a function $X\taking n\to\Ob(\Set)$, and let $\amalg_{i\in I}X(i)$ be the disjoint union. Then there is a canonical function $\pi_X\taking\amalg_{i\in n}X(i)\too n$ which we call the {\em component projection}. We use almost the same symbol in a different context; namely, for any function $s\taking m\to n$ we denote the {\em $s$-coordinates projection} by 
$$\pi_s\taking\prod_{i\in n}X(i)\too\prod_{j\in m}X(s(j)).$$
In particular, if $i\in n$ is an element, we consider it as a function $i\taking\{*\}\to n$ and write $\pi_i\taking\prod_{i\in n}X(i)\to X(i)$ for the usual $i$th coordinate projection.

A {\em pointed set} is a pair $(S,s)$ where $S\in\Ob(\Set)$ is a set and $s\in S$ is a chosen element, called the {\em base point}. In particular a pointed set cannot be empty. Given another pointed set $(T,t)$, a {\em pointed function from $(S,s)$ to $(T,t)$} consists of a function $f\taking S\to T$ such that $f(s)=t$. We denote the category of pointed sets by $\Set_*$. There is a forgetful functor $\Set_*\to\Set$ which forgets the basepoint; it has a left adjoint which adjoins a free basepoint $X\mapsto X\amalg\{*\}$. We often find it convenient not to mention basepoints; if we speak of a set $X$ as though it is pointed, we are actually speaking of $X\amalg\{*\}$. If $S,S'$ are pointed sets then the product $S\times S'$ is also naturally pointed, with basepoint $(*,*)$, again denoted simply by $*$. 

We often speak of functions $n\to\Ob(\Set_*)$, where $n$ is a finite set. Of course, $\Ob(\Set_*)$ is not itself a small set, but using the theory of Grothendieck universes \cite{Bou}, this is not a problem. It will be even less of a problem in applications.

\subsection{Acknowledgements}\label{sec:acknowledgements}

David Spivak would like to thank Sam Cho as well as the NIST community, especially Al Jones and Eswaran Subrahmanian. Special thanks go to Nat Stapleton for many valuable conversations in which substantial progress was made toward subjects quite similar to the ones we discuss here.

Dylan Rupel would like to thank Jason Isbell and Kiyoshi Igusa for many useful discussions.

\section{$\mcW$, the operad of directed wiring diagrams}\label{sec:operad}

In this section we will define the operad $\mcW$ of black boxes and directed wiring diagrams (WDs). It governs the forms that a black box can take, the rules that a WD must follow, and the formula for how the substitution of WDs into a WD yields a WD. There is no bound on the depth to which wiring diagrams can be nested. That is, we prove an associative law which roughly says that the substitution formula is well-defined for any degree of nesting, shallow or deep. 

We will use the operad $\mcW$ to discuss the hierarchical nature of processes. Each box in our operad will be filled with a process, and each wiring diagram will effectively build a complex process out of simpler ones. However, this is not strictly a matter of the operad $\mcW$ but of an {\em algebra on $\mcW$}. This algebra will be discussed in Section \ref{sec:algebra}.

The present section is organized as follows. First, in Section \ref{sec:operad def} we give the technical definition of the term {\em operad} and a few examples. In Section \ref{sec:operad announcements} we propose our operad $\mcW$ of wiring diagrams. It will include drawings that should clarify the matter. In Section \ref{sec:ground W} we present an example that will run throughout the paper and end up producing the Fibonacci sequence. This section is recommended especially to the more category-theoretically shy reader. Finally, in Section \ref{sec:operad requirements} we give a technical proof that our proposal for $\mcW$ satisfies the requirements for being a true operad, i.e. we establish the well-definedness of repeated substitution as discussed above.

\subsection{Definition and basic examples of operads}\label{sec:operad def}

Before we begin, we should give a warning about our use of the term ``operad".

\begin{warning}\label{warn:operad}

Throughout this paper, we use the word {\em operad} to mean what is generally called a {\em symmetric colored operad} or a {\em symmetric multicategory}. This abbreviated nomenclature is not new, for example it is used in \cite{Lur}. Hopefully no confusion will arise. For a full treatment of operads, multicategories, and how they fit into a larger mathematical context, see \cite{Lei}.

\end{warning}

Most of Section \ref{sec:operad def} is recycled material, taken almost verbatim from \cite{Sp2}. We repeat it here for the convenience of the reader.

\begin{definition}\label{def:operad}

An {\em operad} $\mcO$ is defined as follows: One announces some constituents (A. objects, B. morphisms, C. identities, D. compositions) and proves that they satisfy some requirements (1. identity law, 2. associativity law). Specifically, 
\begin{enumerate}[\hsp A.]
\item one announces a collection $\Ob(\mcO)$, each element of which is called an {\em object} of $\mcO$.
\item for each object $y\in\Ob(\mcO)$, finite set $n\in\Ob(\Fin)$, and $n$-indexed set of objects $x\taking n\to\Ob(\mcO)$, one announces a set $\mcO_n(x;y)\in\Ob(\Set)$. Its elements are called {\em morphisms from $x$ to $y$} in $\mcO$. 
\item for every object $x\in\Ob(\mcO)$, one announces a specified morphism denoted $\id_x\in\mcO_1(x;x)$ called {\em the identity morphism on $x$}.
\item Let $s\taking m\to n$ be a morphism in $\Fin$. Let $z\in\Ob(\mcO)$ be an object, let $y\taking n\to\Ob(\mcO)$ be an $n$-indexed set of objects, and let $x\taking m\to\Ob(\mcO)$ be an $m$-indexed set of objects. For each element $i\in n$, write $m_i:=s^\m1(i)$ for the pre-image of $s$ under $i$, and write $x_i=x|_{m_i}\taking m_i\to\Ob(\mcO)$ for the restriction of $x$ to $m_i$. Then one announces a function 
\begin{align}\label{dia:composition formula}
\circ\taking\mcO_n(y;z)\times\prod_{i\in n}\mcO_{m_i}(x_i;y(i))\too\mcO_{m}(x;z),
\end{align} 
called {\em the composition formula} for $\mcO$.
\end{enumerate}
Given an $n$-indexed set of objects $x\taking n\to\Ob(\mcO)$ and an object $y\in\Ob(\mcO)$, we sometimes abuse notation and denote the set of morphisms from $x$ to $y$ by $\mcO(x_1,\ldots,x_n;y)$.
\ffootnote{-4pt}{There are three abuses of notation when writing $\mcO(x_1,\ldots,x_n;y)$, which we will fix one by one. First, it confuses the set $n\in\Ob(\Fin)$ with its cardinality $|n|\in\NN$. But rather than writing $\mcO(x_1,\ldots,x_{|n|};y)$, it would be more consistent to write $\mcO(x(1),\ldots,x(|n|);y)$, because we have assigned subscripts another meaning in D. However, even this notation unfoundedly suggests that the set $n$ has been endowed with a linear ordering, which it has not. This may be seen as a more serious abuse, but see Remark \ref{rem:symmetry}.}
We may write $\Hom_\mcO(x_1,\ldots,x_n;y)$, in place of $\mcO(x_1,\ldots,x_n;y)$, when convenient. We can denote a morphism $\phi\in\mcO_n(x;y)$ by $\phi\taking x\to y$ or by $\phi\taking (x_1,\ldots,x_n)\to y$; we say that each $x_i$ is a {\em domain object} of $\phi$ and that $y$ is the {\em codomain object} of $\phi$. We use infix notation for the composition formula, e.g. writing $\psi\circ(\phi_1,\ldots,\phi_n)$.

These constituents (A,B,C,D) must satisfy the following requirements:
\begin{enumerate}[\hsp 1.]
\item for every $x_1,\ldots,x_n,y\in\Ob(\mcO)$ and every morphism $\phi\taking(x_1,\ldots,x_n)\to y$, we have
$$\phi\circ(\id_{x_1},\ldots,\id_{x_n})=\phi\hsp\tn{and}\hsp\id_y\circ\phi=\phi;$$
\item Let $m\To{s}n\To{t}p$ be composable morphisms in $\Fin$. Let $z\in\Ob(\mcO)$ be an object, let $y\taking p\to\Ob(\mcO)$, $x\taking n\to\Ob(\mcO)$, and $w\taking m\to\Ob(\mcO)$ respectively be a $p$-indexed, $n$-indexed, and $m$-indexed set of objects. For each $i\in p$, write $n_i=t^\m1(i)$ for the pre-image and $x_i\taking n_i\to\Ob(\mcO)$ for the restriction. Similarly, for each $k\in n$ write $m_k=s^\m1(k)$ and $w_k\taking m_k\to\Ob(\mcO)$; for each $i\in p$, write $m_{i,-}=(t\circ s)^\m1(i)$ and $w_{i,-}\taking m_{i,-}\to\Ob(\mcO)$; for each $j\in n_i$, write $m_{i,j}:=s^\m1(j)$ and $w_{i,j}\taking m_{i,j}\to\Ob(\mcO)$. Then the diagram below commutes:
$$\hspace{-1in}\xymatrix@=18pt{
&
{\hspace{.9in}\color{white}\prod\color{black}}
\save[]+<0cm,0cm>*\txt<30pc>{$
\mcO_p(y;z)\times\prod_{i\in p}\mcO_{n_i}(x_i;y(i))\times\prod_{i\in p,\ j\in n_i}\mcO_{m_{i,j}}(w_{i,j};x_i(j))
$}
\ar[rd]\ar[ld]
\restore\\
{\hspace{1in}\color{white}\prod\color{black}}
\save[]+<.4cm,0cm>*\txt<30pc>{$
\mcO_n(x;z)\times\prod_{k\in n}\mcO_{m_k}(w_k;x(k))
$}
\ar[dr]
\restore&&
{\hspace{1in}\color{white}\prod\color{black}}
\save[]+<-.3cm,0cm>*\txt<30pc>{$
\mcO_p(y;z)\times\prod_{i\in p}\mcO_{m_{i,-}}(w_{i,-};y(i))
$}
\ar[dl]
\restore\\
&\mcO_m(w;z)
}
$$

\end{enumerate}

\end{definition}

\begin{remark}\label{rem:symmetry}

In this remark we will discuss the abuse of notation in Definition \ref{def:operad} and how it relates to an action of a symmetric group on each morphism set in our definition of operad. We follow the notation of Definition \ref{def:operad}, especially following the use of subscripts in the composition formula.

Suppose that $\mcO$ is an operad, $z\in\Ob(\mcO)$ is an object, $y\taking n\to\Ob(\mcO)$ is an $n$-indexed set of objects, and $\phi\taking y\to z$ is a morphism. If we linearly order $n$, enabling us to write $\phi\taking (y(1),\ldots,y(|n|))\to z$, then changing the linear ordering amounts to finding an isomorphism of finite sets $\sigma\taking m\To{\iso} n$, where $|m|=|n|$. Let $x=y\circ\sigma$ and for each $i\in n$, note that $m_i=\sigma^\m1(\{i\})=\{\sigma^\m1(i)\}$, so $x_i=x|_{\sigma^\m1(i)}=y(i)$. Taking $\id_{x_i}\in\mcO_{m_i}(x_i;y(i))$ for each $i\in n$, and using the identity law, we find that the composition formula induces a bijection $\mcO_n(y;z)\To{\iso}\mcO_m(x;z)$, which we might denote by 
$$\sigma\taking\mcO(y(1),y(2),\ldots,y(n);z)\iso\mcO\big(y(\sigma(1)),y(\sigma(2)),\ldots,y(\sigma(n));z\big).$$
In other words, there is an induced group action of $\Aut(n)$ on $\mcO_n(y(1),\ldots,y(n);z)$, where $\Aut(n)$ is the group of permutations of an $n$-element set.

Throughout this paper, we will permit ourselves to abuse notation and speak of morphisms $\phi\taking (x_1,x_2,\ldots,x_n)\to y$ for a natural number $n\in\NN$, without mentioning the abuse inherent in choosing an order, so long as it is clear that permuting the order of indices would not change anything up to canonical isomorphism.

\end{remark}

\begin{example}\label{ex:Sets}

We define the operad of sets, denoted $\Sets$, as follows. We put $\Ob(\Sets):=\Ob(\Set)$. Given a natural number $n\in\NN$ and objects $X_1,\ldots,X_n, Y\in\Ob(\Sets)$, we define 
$$\Sets(X_1,X_2,\ldots,X_n;Y):=\Hom_\Set(X_1\times X_2\times\cdots\times X_n,Y).$$ 
For any $X\in\Ob(\Sets)$ the identity morphism $\id_X\taking X\to X$ is the same identity as that in $\Set$. 

The composition formula is as follows. Suppose given a set $Z\in\Ob(\Set)$, a finite set $n\in\Ob(\Fin)$, for each $i\in n$ a set $Y_i\in\Ob(\Set)$ and a finite set $m_i\in\Ob(\Fin)$, and for each $j\in m_i$ a set $X_{i,j}\in\Ob(\Set)$. Suppose furthermore that we have composable morphisms: a function $g\taking\prod_{i\in n}Y_i\to Z$ and for each $i\in n$ a function $f_i\taking \prod_{j\in m_i}X_{i,j}\to Y_i$. Let $m=\amalg_im_i$. We need a function $\prod_{j\in m} X_j\to Z$, which we take to be the composite 
$$\prod_{i\in n}\prod_{j\in m_i}X_{i,j}\Too{\prod_{i\in n}f_i}\prod_{i\in n}Y_i\Too{g}Z.$$

It is not hard to see that this composition formula is associative.

\end{example}

\begin{example}\label{ex:commutative operad}

The {\em commutative operad} $\mcE$ has one object, say $\Ob(\mcE)=\{\monOb\}$, and for each $n\in\NN$ it has a single $n$-ary morphism, $\mcE_n(\blacktriangle,\ldots,\blacktriangle;\blacktriangle)=\{\mu_n\}$. 

\end{example}

\subsection{The announced structure of the wiring diagrams operad $\mcW$}\label{sec:operad announcements}

To define our operad $\mcW$, we need to announce its structure, i.e. 
\begin{itemize}
\item define what constitutes an object of $\mcW$,
\item define what constitutes a morphism of $\mcW$, 
\item define the identity morphisms in $\mcW$, and
\item the formula for composing morphisms of $\mcW$.
\end{itemize}

For each of these we will first draw and describe a picture to have in mind, then give a mathematical definition. In Section \ref{sec:operad requirements} we will prove that the announced structure has the required properties.

\subsubsection{Objects are black boxes}

Each object $X$ will be drawn as a box with input arrows entering on the left of the box and output arrows leaving from the right of the box. The arrows will be called {\em wires}. All input and output wires will be drawn across the corresponding vertical wall of the box.
   \begin{align}\label{dia:object in W}
    \begin{tikzpicture}
     \draw (0,0) rectangle (1,1);
     \draw [->] (-0.25,0.25) -- (0.25,0.25);
     \draw [->] (-0.25,0.5) -- (0.25,0.5);
     \draw [->] (-0.25,0.75) -- (0.25,0.75);
     \draw (-0.8,0.5) node{$\inp{X}$};
     \draw [->] (0.75,0.33) -- (1.25,0.33);
     \draw [->] (0.75,0.67) -- (1.25,0.67);
     \draw (2,0.5) node{$\outp{X}$};
    \end{tikzpicture}
   \end{align}
Each wire is also assigned a set of values that it can carry, and this set can be written next to the wire, or the wires may be color coded. See Example \ref{ex:black box} below. As above, we often leave off the values assignment in pictures for readability reasons. 

\begin{announcement}[Objects of $\mcW$]
An object $X\in\Ob(\mcW)$ is called a {\em black box}, or {\em box} for short. It consists of a tuple $X:=(\inp{X},\outp{X},\vset)$, where 
\begin{itemize}
 \item $\inp{X}\in\Ob(\Fin)$ is a finite set, called the set of {\em input wires to $X$},
 \item $\outp{X}\in\Ob(\Fin)$ is a finite set, called the set of {\em output wires from $X$}, and
 \item $\vset(X)\taking\inp{X}\amalg\outp{X}\to\Ob(\Set_*)$ is a function, called the {\em values assignment for $X$}. For each wire $i\in\inp{X}\amalg\outp{X}$, we call $\vset(i)\in\Ob(\Set_*)$ the set of {\em values assigned to wire $i$}, and we call its basepoint element the {\em default value} on wire $i$.
\end{itemize}
\end{announcement}

\begin{example}\label{ex:black box}

We may take $X=(\{1\},\{2,3\},\vset)$, where $\vset\taking\{1,2,3\}\to\Ob(\Set_*)$ is given by $\vset(1)=\NN$, $\vset(2)=\NN$, and $\vset(3)=\{a,b,c\}$.
\ffootnote{-2pt}{
The functor $\vset$ is supposed to assign pointed sets to each wire, but no base points are specified in the description above. As discussed in Section \ref{sec:notation}, in this case we really have $\vset(1)=\NN\amalg\{*\}$, $\vset(2)=\NN\amalg\{*\}$, and $\vset(3)=\{a,b,c\}\amalg\{*\}$, where $*$ is the default value.
}
We would draw $X$ as follows.
\begin{center}
\begin{tikzpicture}
\path (0,0);
  \blackbox{(2,2)}{1}{2}{$X$}{0.5}
  \draw (-0.25,1) node[left] {$\NN$};
  \draw (2.25,1.33) node[right] {$\{a,b,c\}$};
  \draw (2.25,.66) node[right] {$\NN$};  
\end{tikzpicture}
\end{center}
  
The input wire carries natural numbers, as does one of the output wires, and the other output wire carries letters $a,b,c$.

\end{example}

\subsubsection{Morphisms are directed wiring diagrams}

Given black boxes $Y_1,\ldots,Y_n\in\Ob(\mcW)$ and a black box $Z\in\Ob(\mcW)$, we must define the set $\mcW_n(Y;Z)$ of wiring diagrams (WDs) of type $Y_1,\ldots,Y_n\to Z$. Such a wiring diagram can be taken to denote a way to wire black boxes $Y_1,\ldots,Y_n$ together to form a larger black box $Z$. A typical such wiring diagram is shown below:
 
   \begin{align}\label{dia:morphism in W}
    \begin{tikzpicture}
     \draw(2.5,3.25) node{$\psi\taking(Y_1,Y_2,Y_3)\to Z$};
     \draw (0,0) rectangle (5,3);
     \draw (-1,1.5) node{$\inp{Z}$};
     \draw [->] (-0.25,0.75) -- (0.25,0.75);
     \draw [->] (-0.25,1.5) -- (0.25,1.5);
     \draw [->] (-0.25,2.25) -- (0.25,2.25);
     \draw (6,1.5) node{$\outp{Z}$};
     \draw [->] (4.75,0.33) -- (5.25,0.33);
     \draw [->] (4.75,0.66) -- (5.25,0.66);
     \draw [->] (4.75,1) -- (5.25,1);
     \draw [->] (4.75,1.33) -- (5.25,1.33);
     \draw [->] (4.75,1.67) -- (5.25,1.67);
     \draw [->] (4.75,2) -- (5.25,2);
     \draw [->] (4.75,2.34) -- (5.25,2.34);
     \draw [->] (4.75,2.67) -- (5.25,2.67);
     \draw (1.25,2) node{$Y_1$};
     \draw (0.75,1.75) rectangle (1.75,2.25);
     \draw (3.5,1) node{$Y_2$};
     \draw (3,0.75) rectangle (4,1.25);
     \draw (3.5,2.25) node{$Y_3$};
     \draw (3,2) rectangle (4,2.5);
     \draw [->] (0.625,1.9166) -- (0.875,1.9166);
     \draw [->] (0.625,2.0834) -- (0.875,2.0834);
     \draw [->] (2.875,0.9166) -- (3.125,0.9166);
     \draw [->] (2.875,1.0834) -- (3.125,1.0834);
     \draw [->] (2.875,2.1666) -- (3.125,2.1666);
     \draw [->] (2.875,2.3334) -- (3.125,2.3334);
     \draw [->] (1.625,1.875) -- (1.875,1.875);
     \draw [->] (1.625,2) -- (1.875,2);
     \draw [->] (1.625,2.125) -- (1.875,2.125);
     \draw [->] (3.875,0.875) -- (4.125,0.875);
     \draw [->] (3.875,1) -- (4.125,1);
     \draw [->] (3.875,1.125) -- (4.125,1.125);
     \draw [->] (3.875,2.125) -- (4.125,2.125);
     \draw [->] (3.875,2.25) -- (4.125,2.25);
     \draw [->] (3.875,2.375) -- (4.125,2.375);
     \draw [->](4.125,2.125) .. controls (4.43875,2.125) and (4.43875,2.34) .. (4.65,2.34);%
     \filldraw[black] (1.5,0.9166) circle (2pt)
                    (2.33,1.5) circle (2pt)
                    (4.7,2.34) circle (2pt)
                    (2.5,0.33) circle (2pt);
     \draw [->] (0.25,0.75) .. controls (1.375,0.75) and (1.375,0.33) .. (2.445,0.33);
     \draw (2.5,0.33) -- (4.75,0.33);
     \draw [->] (0.25,1.5) .. controls (0.875,1.5) and (0.875,0.9166) .. (1.445,0.9166);
     \draw (1.5,0.9166) -- (2.875,0.9166);
     \draw (0.25,2.25) .. controls (0.4375,2.25) and (0.4375,1.9166) .. (0.625,1.9166);
     \draw (1.875,1.875) .. controls (2.0625,1.875) and (2.0625,1.5).. (2.25,1.5);
     \draw [->] (2.25,1.5) -- (2.275,1.5);
     \draw (2.4,1.5) -- (2.5,1.5);
     \draw (2.5,1.5) .. controls (2.6875,1.5) and (2.6875,1.0834) .. (2.875,1.0834);
     \draw (2.5,1.5) .. controls (3.625,1.5) and (3.625,1.67) .. (4.75,1.67);
     \draw (1.875,2) .. controls (2.6875,2) and (2.6875,1.8334) .. (3.5,1.8334);
     \draw (3.5,1.8334) .. controls (4.125,1.8334) and (4.125,2) .. (4.75,2);
     \draw (1.875,2.125) .. controls (2.375,2.125) and (2.375,2.1666) .. (2.875,2.1666);
     \draw (4.125,0.875) .. controls (4.43875,0.875) and (4.43875,0.66) .. (4.75,0.66);
     \draw (4.125,1) -- (4.75,1);
     \draw (4.125,1.125) .. controls (4.43875,1.125) and (4.43875,1.33) .. (4.75,1.33);
     \draw (4.125,2.25) .. controls (4.43875,2.25) and (4.43875,2.67) .. (4.75,2.67);
     \draw (4.125,2.375) .. controls (4.3125,2.375) and (4.3125,2.75) .. (3.5,2.75);
     \draw (3.5,2.75) .. controls (2.6875,2.75) and (2.6875,2.3334) .. (2.875,2.3334);
     \draw (3.5,2.75) .. controls (0.4375,2.75) and (0.4375,2.0834) .. (0.625,2.0834);
    \end{tikzpicture}
 \end{align}
Here $n=\ul{3}$, and for example $Y_1$ has two input wire and three outputs wires. Each wire in a WD has a specified directionality. As it travels a given wire may split into separate wires, but separate wires cannot come together. The wiring diagram also includes a finite set of delay nodes; in the above case there are four. 

One should think of a wiring diagram $\psi\taking Y_1,\ldots,Y_n\to Z$ as a rule for managing material (or information) flow between the components of an organization. Think of $\psi$ as representing this organization. The individual components of the organization are the interior black boxes (the domain objects of $\psi$) and the exterior black box (the codomain object of $\psi$). Each component supplies material to $\psi$ as well as demands material from $\psi$. For example component $Z$ supplies material on the left side of $\psi$ and demands it on the right side of $\psi$. On the other hand, each $Y_i$ supplies material on its right side and demands material on its left. Like the IDEF0 standard for functional modeling diagrams \cite{NIST}, we always adhere to this directionality.

We insist on one perhaps surprising (though seemingly necessary rule), namely that the wiring diagram cannot connect an output wire of $Z$ directly to an input wire of $Z$. Instead, each output wire of $Z$ is supplied either by an output wire of some $Y(i)$ or by a delay node. 

\begin{announcement}[Morphisms of $\mcW$]\label{ann:morphisms in W}
Let $n\in\Ob(\Fin)$ be a finite set, let $Y\taking n\to\Ob(\mcW)$ be an $n$-indexed set of black boxes, and let $Z\in\Ob(\mcW)$ be another black box. We write
\begin{align}\label{dia:early tensor}
 \inp{Y}&=\amalg_{i\in n}\inp{Y(i)},\\\nonumber
 \outp{Y}&=\amalg_{i\in n}\outp{Y(i)}.
\end{align}
We take $\vset\taking\inp{Y}\amalg\outp{Y}\to\Ob(\Set_*)$ to be the induced map.

A morphism 
$$\psi\taking Y(1),\ldots,Y(n)\to Z$$
in $\mcW_n(Y;Z)$ is called a {\em temporal wiring diagram}, a {\em wiring diagram}, or a {\em WD} for short. It consists of a tuple $(\Del{\psi},\vset,s_\psi)$ as follows.
\ffootnote{-1pt}
{A morphism $\psi\taking Y\to Z$ is in fact an isomorphism class of this data. That is, given two tuples $(\Del{\psi},\vset,s_\psi)$ and $(\Del{\psi}',\vset',s'_\psi)$ as above, with a bijection $\Del{\psi}\iso\Del{\psi}'$ making all the appropriate diagrams commute, we consider these two tuples to constitute the same morphism $\psi\taking Y\to Z$. 
}
\begin{itemize}
 \item $\Del{\psi}\in\Ob(\Fin)$ is a finite set, called the set of {\em delay nodes for $\psi$}. At this point we can define the following sets:
\begin{tabbing}
\hsp\=$\Dem{\psi}:=\outp{Z}\amalg\inp{Y}\amalg\Del{\psi}$\hspace{.5in}\=the set of {\em demand wires in $\psi$}, and\\
\>$\Sup{\psi}:=\inp{Z}\amalg\outp{Y}\amalg\Del{\psi}$\>the set of {\em supply wires in $\psi$}.
\end{tabbing}

 \item $\vset\taking\Del{\psi}\to\Ob(\Set)$ is a function, called the {\em value-set assignment for $\psi$}, such that the diagram 
  $$\xymatrix{
  \Del{\psi}\ar[r]^{\id_{\Del{\psi}}}\ar[d]_{\id_{\Del{\psi}}}\ar[dr]^\vset&\Dem{\psi}\ar[d]^\vset\\
  \Sup{\psi}\ar[r]_\vset&\Set_*
  }
  $$
 commutes (meaning that every delay node demands the same value-set that it supplies).
 \item $s_\psi\taking \Dem{\psi}\to \Sup{\psi}$ is a function, called the {\em supplier assignment for $\psi$}. The supplier assignment $s_\psi$ must satisfy two requirements: 
  \begin{enumerate}
  \item The following diagram commutes:
  $$\xymatrix{\Dem{\psi}\ar[d]_{s_\psi}\ar[dr]^{\vset}\\\Sup{\psi}\ar[r]_{\vset}&\Set_*}$$
  meaning that whenever a demand wire is assigned a supplier, the set of values assigned to these wires must be the same.
  \item If $z\in\outp{Z}$ then $s_\psi(z)\not\in\inp{Z}$. Said another way, 
  $$s_\psi|_{\outp{Z}}\ss\outp{Y}\amalg\Del{\psi},$$
  meaning that a global output cannot be directly supplied by a global input. We call this the {\em non-instantaneity requirement}.
  \end{enumerate} 
\end{itemize}

We have functions $\vset\taking\inp{Z}\amalg\outp{Z}\to\Set_*, \vset\taking\inp{Y(i)}\amalg\outp{Y(i)}\to\Set_*,$ and $\vset\taking\Del{\psi}\to\Set_*$. It should not cause confusion if we use the same symbol to denote the induced functions $\vset\taking \Dem{\psi}\to\Set_*$ and $\vset\taking \Sup{\psi}\to\Set_*$.

\end{announcement}

\begin{remark}\label{rem:tensor}

We have taken the perspective that $\mcW$ is an operad.  One might more naturally think of $\mcW$ as the underlying operad of a symmetric monoidal category whose objects are again black boxes and whose morphisms are again wiring diagrams, though now a morphism connects a single internal domain black box to the external codomain black box.  From this perspective one should merge the many isolated black boxes occurring in the domain of a multicategory wiring diagram into a single black box as the domain of the monoidal category wiring diagram.  
 
Though mathematically equivalent and though we make use of this perspective in the course of our proofs, it is somewhat unnatural to perform this grouping in applications.  For example, though it makes some sense to view ourselves writing this paper and you reading this paper as black boxes inside a single ``information conveying" wiring diagram it would be rather strange to conglomerate all of our collective inputs and outputs so that we become a single meta-information entity.  For reasons of this sort we choose to take the perspective of the underlying operad rather than of a monoidal category.

On the other hand, the notation of monoidal categories is convenient, so we introduce it here. Given a finite set $n$ and an $n$-indexed set of objects $Y\taking n\to\Ob(\mcW)$, we discussed in (\ref{dia:early tensor}) what should be seen as a {\em tensor product} 
$$\bigotimes_{i\in n} Y(i)=(\amalg_{i\in n}\inp{Y(i)},\amalg_{i\in n}\outp{Y(i)},\vset),$$ which we write simply as $Y=(\inp{Y},\outp{Y},\vset)$.

Similarly, given an $n$-indexed set of morphisms $\phi_i\taking X_i\to Y(i)$ in $\mcW$, we can form their tensor product 
$$\bigotimes_{i\in n}\phi_i\taking\bigotimes_{i\in n}X_i\to\bigotimes_{i\in n}Y(i),$$
which we write simply as $\phi\taking X\to Y$, in a similar way. That is, we form a set of delay nodes $\Del{\phi}=\amalg_{i\in n}{\Del{\phi_i}}$, supplies $\Sup{\phi}=\amalg_{i\in n}\Sup{\phi_i}$, demands $\Dem{\phi}=\amalg_{i\in n}\Dem{\phi_i}$, and a supplier assignment $s_\phi=\amalg_{i\in n}s_{\phi_i}$, all by taking the obvious disjoint unions. 

\end{remark}

\begin{example}\label{ex:cookies}

In the example below, we see a big box with three little boxes inside, and we see many wires with arrowheads placed throughout. It is a picture of a wiring diagram $\phi\taking (X_1,X_2,X_3)\to Y$. The big box can be viewed as $Y$, which has some number of input and output wires; however, when we see the big box as a {\em container} of the little boxes wired together, we are actually seeing the morphism $\phi$. 

\begin{center}
 \begin{tikzpicture}
 \draw(3.5,3.25) node{$\phi\taking(X_1,X_2,X_3)\to Y$};
  \path (0,0);
  \blackbox{(7.5,3)}{5}{2}{}{0.5}
  \draw (-0.25,0.5) node[left] {eggs};
  \draw (-0.25,1) node[left] {milk};
  \draw (-0.25,1.5) node[left] {salt};
  \draw (-0.25,2) node[left] {sugar};
  \draw (-0.25,2.5) node[left] {flour};
  \draw (3,2.5) node{dry mix};
  \draw (3,1.4) node{wet mix};
  \draw (7.75,1) node[right] {egg yolks};
  \draw (7.75,2) node[right] {cookie batter};
  \path (1,1.75);
  \blackbox{(1,1)}{3}{1}{\small$X_1$}{0.5}
  \path (1,0.25);
  \blackbox{(1,1)}{2}{2}{\small$X_2$}{0.5}
  \path (5.5,1.25);
  \blackbox{(1,1)}{2}{1}{\small$X_3$}{0.5}
  \directarc{(0.25,0.5)}{(0.75,0.58)}
  \directarc{(0.25,1)}{(0.75,0.92)}
  \directarc{(0.25,1.5)}{(0.75,2)}
  \directarc{(0.25,2)}{(0.75,2.25)}
  \directarc{(0.25,2.5)}{(0.75,2.5)}
  \directarc{(2.25,0.58)}{(7.25,1)}
  \directarc{(2.25,0.92)}{(5.25,1.58)}
  \directarc{(2.25,2.25)}{(5.25,1.92)}
  \directarc{(6.75,1.75)}{(7.25,2)}
 \end{tikzpicture}
\end{center}

We aim to explain our terminology of demand and supply, terms which interpret the organization forced on us by the mathematics. Each wire has a demand side and a supply side; when there are no feedback loops, as in the picture above, supplies are on the left side of the wire and demands are to the right, but this is not always the case. Instead, the distinction to make is whether an arrowhead is entering the big box or leaving it: those that enter the big box are supplies to $\phi$, and those that are leaving the big box are demands upon $\phi$. The five left-most arrowheads are entering the big box, so flour, sugar, etc. are being supplied. But flour, sugar, and salt are demands when they leave the big box to enter $X_1$. Counting, one finds 9 supply wires and 9 demand wires (though the equality of these numbers is just a coincidence due to the fact that no wire splits or is wasted).

\end{example}

\subsubsection{Identity morphisms are identity supplier assignments}

Let $Z=(\inp{Z},\outp{Z},\vset)$. The identity wiring diagram $\id_Z\taking Z\to Z$ might be drawn like this:

\begin{center}
\begin{tikzpicture}
\path (1,1);
  \blackbox{(1,1)}{3}{2}{$Z$}{0.5}  
\path (0,0);
  \blackbox{(3,3)}{3}{2}{$Z$}{0.5}
\directarc{(.25,.75)}{(0.75,1.25)}
\directarc{(.25,1.5)}{(0.75,1.5)}
\directarc{(.25,2.25)}{(0.75,1.75)}
\directarc{(2.75,1)}{(2.25,1.33)}
\directarc{(2.75,2)}{(2.25,1.66)}  
\end{tikzpicture}
\end{center}
Even though the interior box is of a different size than the exterior box, the way they are wired together is as straightforward as possible.

\begin{announcement}[Identity morphisms in $\mcW$]\label{ann:identity in W}

Let $Z=(\inp{Z},\outp{Z},\vset_Z)$. The identity wiring diagram $\id_Z\taking Z\to Z$ has $\Del{\id_Z}=\emptyset$ with the unique function $\vset\taking\emptyset\to\Ob(\Set)$, so that $\Dem{\id_Z}=\outp{Z}\amalg\inp{Z}$ and $\Sup{\id_Z}=\inp{Z}\amalg\outp{Z}$. The supplier assignment $s_{\id_Z}\taking\Sup{\id_Z}\to\Dem{\id_Z}$ is given by the identity function, which satisfies the non-instantaneity requirement.

\end{announcement}

\subsubsection{Composition of morphisms is achieved by removing intermediary boxes and associated arrow-heads}

We are interested in substituting a wiring diagram into each black box of a wiring diagram, to produce a more detailed wiring diagram. The basic picture to have in mind is the following:

\begin{align*}
 \begin{tikzpicture}
  \draw(1.875,3.25) node{$\phi_1$};
  \path (1,2);
  \blackbox{(1.75,1)}{2}{2}{}{0.5}
  \path (1.5,2.375);
  \blackbox{(0.5,0.5)}{2}{1}{}{0.25}
  \directarc{(1.25,2.33)}{(1.375,2.5416)}
  \directarc{(1.25,2.67)}{(1.375,2.7084)}
  \directarc{(2.125,2.625)}{(2.5,2.67)}
  \filldraw[black] (1.75,2.1825) circle (2pt);
  \directarc{(1.25,2.33)}{(1.65,2.1825)}
  \directarc{(1.65,2.1825)}{(1.7,2.1825)}
  \directarc{(1.75,2.1825)}{(2.5,2.33)}
  \draw[dashed,-stealth] (1.875,2.5) .. controls (4.325,3.75) .. (6.875,2.25);
  \draw[-stealth,dashed] (2.125,0.625) .. controls (4.625,2) .. (7.125,0.75);
  \draw(2.125,1.5) node{$\phi_2$};
  \path (1,0);
  \blackbox{(2.25,1.25)}{3}{2}{}{0.5}
  \path (2,0.125);
  \blackbox{(0.5,0.375)}{2}{1}{}{0.25}
  \directarc{(1.25,0.3125)}{(1.875,0.25)}
  \directarc{(2.625,0.3075)}{(3,0.4166)}
  \loopright{(2.375,0.875)}{(2,0.625)}{7}
  \loopleft{(2,0.625)}{(1.875,0.375)}{7}
  \path (1.75,0.75);
  \blackbox{(0.5,0.375)}{2}{2}{}{0.25}
  \directarc{(1.25,0.625)}{(1.625,0.875)}
  \directarc{(1.25,0.9375)}{(1.625,1)}
  \directarc{(2.375,1)}{(3,0.8333)}
  \draw(7,3.25) node{$\psi$};
  \path (5,0);
  \blackbox{(4,3)}{4}{5}{}{0.5}
  \path (6,1.75);
  \blackbox{(1.75,1)}{2}{2}{}{0.5}
  \directarc{(5.25,1.8)}{(5.75,2.08)}
  \directarc{(5.25,2.4)}{(5.75,2.42)}
  \directarc{(8,2.08)}{(8.75,2)}
  \directarc{(8,2.42)}{(8.75,2.5)}
  \path (6,0.25);
  \blackbox{(2.25,1.25)}{3}{2}{}{0.5}
  \directarc{(5.25,0.6)}{(5.75,0.5625)}
  \directarc{(5.25,1.2)}{(5.75,0.875)}
  \directarc{(5.25,1.8)}{(5.75,1.1875)}
  \directarc{(8.5,0.6666)}{(8.75,0.5)}
  \directarc{(8.5,0.6666)}{(8.75,1)}
  \directarc{(8.5,1.0833)}{(8.75,1.5)}
 \end{tikzpicture}
\end{align*}

\begin{align*}
 \begin{tikzpicture}
  \path (0,0);
  \blackbox{(4,3)}{4}{5}{}{0.5}
  \path (1,1.75);
  \dashblackbox{(1.75,1)}{2}{2}{}{0.5}
  \directarc{(0.25,1.8)}{(0.75,2.08)}
  \directarc{(0.25,2.4)}{(0.75,2.42)}
  \directarc{(3,2.08)}{(3.75,2)}
  \directarc{(3,2.42)}{(3.75,2.5)}
  \path (1.5,2.125);
  \blackbox{(0.5,0.5)}{2}{1}{}{0.25}
  \directarc{(1.25,2.08)}{(1.375,2.2916)}
  \directarc{(1.25,2.42)}{(1.375,2.4584)}
  \directarc{(2.125,2.375)}{(2.5,2.42)}
  \filldraw[black] (1.75,1.9325) circle (2pt);
  \directarc{(1.25,2.08)}{(1.65,1.9325)}
  \directarc{(1.65,1.9325)}{(1.7,1.9325)}
  \directarc{(1.75,1.9325)}{(2.5,2.08)}
  \path (1,0.25);
  \dashblackbox{(2.25,1.25)}{3}{2}{}{0.5}
  \directarc{(0.25,0.6)}{(0.75,0.5625)}
  \directarc{(0.25,1.2)}{(0.75,0.875)}
  \directarc{(0.25,1.8)}{(0.75,1.1875)}
  \directarc{(3.5,0.6666)}{(3.75,0.5)}
  \directarc{(3.5,0.6666)}{(3.75,1)}
  \directarc{(3.5,1.0833)}{(3.75,1.5)}
  \path (2,0.375);
  \blackbox{(0.5,0.375)}{2}{1}{}{0.25}
  \directarc{(1.25,0.5625)}{(1.875,0.5)}
  \directarc{(2.625,0.5575)}{(3,0.6666)}
  \loopright{(2.375,1.125)}{(2,0.875)}{7}
  \loopleft{(2,0.875)}{(1.875,0.625)}{7}
  \path (1.75,1);
  \blackbox{(0.5,0.375)}{2}{2}{}{0.25}
  \directarc{(1.25,0.875)}{(1.625,1.125)}
  \directarc{(1.25,1.1875)}{(1.625,1.25)}
  \directarc{(2.375,1.25)}{(3,1.0833)}
  \draw [->,snake=snake] (4.5,1.5) -- (5.5,1.5);
  \draw(8,3.25) node{$\omega=\psi\circ(\phi_1,\phi_2)$};
  \path (6,0);
  \blackbox{(4,3)}{4}{5}{}{0.5}
  \path (7.5,2.125);
  \blackbox{(0.5,0.5)}{2}{1}{}{0.25}
  \directarc{(6.25,1.8)}{(7.375,2.2916)}
  \directarc{(6.25,2.4)}{(7.375,2.4584)}
  \directarc{(8.125,2.375)}{(9.75,2.5)}
  \filldraw[black] (7.75,1.9325) circle (2pt);
  \directarc{(6.25,1.8)}{(7.65,1.9325)}
  \directarc{(7.65,1.9325)}{(7.7,1.9325)}
  \directarc{(7.75,1.9325)}{(9.75,2)}
  \path (8,0.375);
  \blackbox{(0.5,0.375)}{2}{1}{}{0.25}
  \directarc{(6.25,0.6)}{(7.875,0.5)}
  \directarc{(8.625,0.5575)}{(9.75,0.5)}
  \directarc{(8.625,0.5575)}{(9.75,1)}
  \loopright{(8.375,1.125)}{(8,0.875)}{7}
  \loopleft{(8,0.875)}{(7.875,0.625)}{7}
  \path (7.75,1);
  \blackbox{(0.5,0.375)}{2}{2}{}{0.25}
  \directarc{(6.25,1.2)}{(7.625,1.125)}
  \directarc{(6.25,1.8)}{(7.625,1.25)}
  \directarc{(8.375,1.25)}{(9.75,1.5)}
 \end{tikzpicture}
\end{align*}
On the top we see a wiring diagram $\psi$ in which each internal box, say $Y(1)$ and $Y(2)$, has a corresponding wiring diagram $\phi_1$ and $\phi_2$ respectively. Dropping them into place and then removing the intermediary boxes leaves a single wiring diagram $\omega$. One can see that every input of $Y(i)$ plays a dual role. Indeed, it is a demand from the perspective of $\psi$, and it is a supply from the perspective of $\phi_i$. Similarly, every output of $Y(i)$ plays a dual role as supply in $\psi$ and demand in $\phi_i$. 

In Announcement \ref{ann:composition in W} we will provide the composition formula for $\mcW$. Namely, we will be given morphisms $\phi_i\taking X_i\to Y(i)$ and $\psi\taking Y\to Z$. Each of these has its own delay nodes, $\Del{\phi_i}$ and $\Del{\psi}$ as well as its own supplier assignments. Write $\phi=\bigotimes_i\phi_i\taking X\to Y$ as in Remark \ref{rem:tensor}. For the reader's convenience, we now summarize the demands and supplies for each of the given morphisms $\phi_i\taking X_i\to Y(i)$ and $\psi\taking Y\to Z$, as well as their (not-yet defined) composition $\omega\taking X\to Z$. Let $\Del{\omega}=\Del{\phi}\amalg\Del{\psi}$.
\begin{align}\label{dia:table of notation}
\begin{tabular}{| l || l | l |}
\bhline
\multicolumn{3}{|c|}{Summary of notation for composition in $\mcW$}\\\bhline
{\bf Morphism}&{$\Dem{-}$}&{$\Sup{-}$}\\\bbhline
$\phi_i$&$\outp{Y(i)}\amalg\inp{X_i}\amalg\Del{\phi_i}$&$\inp{Y(i)}\amalg\outp{X_i}\amalg\Del{\phi_i}$\\\hline
$\phi$&$\outp{Y}\amalg\inp{X}\amalg\Del{\phi}$&$\inp{Y}\amalg\outp{X}\amalg\Del{\phi}$\\\hline
$\psi$&$\outp{Z}\amalg\inp{Y}\amalg\Del{\psi}$&$\inp{Z}\amalg\outp{Y}\amalg\Del{\psi}$\\\hline
$\omega$&$\outp{Z}\amalg\inp{X}\amalg\Del{\omega}$&$\inp{Z}\amalg\outp{X}\amalg\Del{\omega}$\\\bhline
\end{tabular}
\end{align}

\begin{lemma}\label{lemma:pushout lemma}

Suppose given morphisms $X\To{\phi} Y$ and $Y\To{\psi}Z$ in $\mcW$, as above. That is, we are given sets of delay nodes, $\Del{\phi}$ and $\Del{\psi}$, as well as supplier assignments 
$$s_{\phi}\taking\Dem{\phi}\to\Sup{\phi}\hsp\tn{and}\hsp s_{\psi}\taking\Dem{\psi}\to\Sup{\psi}$$
each of which is subject to a non-instantaneity requirement,
\begin{align}\label{dia:non-instantaneity requirement}
s_{\phi}\big|_{\outp{Y}}\ss\outp{X}\amalg\Del{\phi}\hsp\tn{and}\hsp s_{\psi}\big|_{\outp{Z}}\ss\outp{Y}\amalg\Del{\psi}.
\end{align}
Let $s_\omega$ be as in Table \ref{dia:table of notation}. It follows that the diagram below is a pushout
$$
\xymatrix@=35pt{
\Sup{\phi}\ar[rr]^{h}&&\Sup{\omega}\urlimit\\
\inp{Y}\amalg\outp{Y}\ar[u]^{g}\ar[rr]_{e}&&\Sup{\psi}\ar[u]_{f}
}
$$
where 
\begin{align}\label{dia:pushout morphisms}
e&=s_\psi\big|_{\inp{Y}}\amalg\id_{\outp{Y}}\\\nonumber
f&=\id_{\inp{Z}}\amalg s_\phi\big|_{\outp{Y}}\amalg\id_{\Del{\psi}}\\\nonumber
g&=\id_{\inp{Y}}\amalg s_{\phi}\big|_{\outp{Y}}\\\nonumber
h&= (f\circ s_\psi)\big|_{\inp{Y}}\amalg\id_{\outp{X}}\amalg\id_{\Del{\phi}}.
\end{align}
Moreover, each of $e,f,g,$ and $h$ commute with the appropriate functions $\vset$.

\end{lemma}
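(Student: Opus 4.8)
The plan is to compute the relevant pushout in $\Set$ explicitly and match it with $\Sup{\omega}$, deferring the compatibility with $\vset$ to the end where it is routine. Since the forgetful functor $\Set_*\to\Set$ creates the colimits in question and all maps in sight are pointed, it suffices to exhibit the square as a pushout of underlying sets. Recall that the pushout of the span $\Sup{\phi}\From{g}\inp{Y}\amalg\outp{Y}\To{e}\Sup{\psi}$ is the quotient $P:=(\Sup{\phi}\amalg\Sup{\psi})/\!\sim$ by the equivalence relation generated by $g(a)\sim e(a)$. Unwinding the definitions of $e$ and $g$ summand by summand, the generating relations are precisely $a\sim s_\psi(a)$ for $a\in\inp{Y}$ and $s_\phi(a)\sim a$ for $a\in\outp{Y}$. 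First I would confirm the square commutes, i.e.\ $h\circ g=f\circ e$, by evaluating both sides on the summands $\inp{Y}$ and $\outp{Y}$ separately; this is immediate from the formulas (\ref{dia:pushout morphisms}).

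The crux is to show that each $\sim$-class contains exactly one element of $\Sup{\omega}$, under the evident inclusion $\Sup{\omega}=\inp{Z}\amalg\outp{X}\amalg\Del{\phi}\amalg\Del{\psi}\inj\Sup{\phi}\amalg\Sup{\psi}$. Write the coproduct as $W\amalg I$, where $W=\Sup{\omega}$ and $I=\inp{Y}\amalg\outp{Y}$ are the intermediary wires, and define a forward map $\sigma\taking I\to\Sup{\phi}\amalg\Sup{\psi}$ by $\sigma|_{\inp{Y}}=s_\psi$ and $\sigma|_{\outp{Y}}=s_\phi$, so that the generating relations read $a\sim\sigma(a)$. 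The one place where the hypotheses do real work is the non-instantaneity requirement (\ref{dia:non-instantaneity requirement}), which gives $\sigma(\outp{Y})\ss\outp{X}\amalg\Del{\phi}\ss W$: thus iterating $\sigma$ from $a\in\outp{Y}$ reaches $W$ in one step, while from $a\in\inp{Y}$ it reaches $W$ in at most two steps (either $s_\psi(a)\in\inp{Z}\amalg\Del{\psi}\ss W$ directly, or $s_\psi(a)\in\outp{Y}$, and then $\sigma$ carries it into $W$). In particular $\sigma$ admits no cycles, and its functional graph is a disjoint union of trees each rooted at a single element of $W$; these trees are exactly the connected components of the relation graph, so every $\sim$-class has a unique representative in $W$.

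With the representative count established, the bijection is formal. Since the square commutes, the copairing $[h,f]\taking\Sup{\phi}\amalg\Sup{\psi}\to\Sup{\omega}$ respects $\sim$ and descends to a map $\bar q\taking P\to\Sup{\omega}$. On $W$ both $h$ and $f$ restrict to identity inclusions, so $\bar q\circ\iota=\id_{\Sup{\omega}}$ for the class map $\iota\taking\Sup{\omega}\to P$; conversely, tracing the chain analysis above shows that $\bar q$ sends each class to its unique $W$-representative (using that $f$ is the identity on $\inp{Z}\amalg\Del{\psi}$ and equals $s_\phi$ on $\outp{Y}$, and that $h|_{\inp{Y}}=f\circ s_\psi$), whence $\iota\circ\bar q=\id_P$. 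Thus $\bar q$ is a bijection and the square is a pushout.

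Finally, each of $e,f,g,h$ is assembled from identity maps and from restrictions of $s_\phi$ and $s_\psi$, all of which commute with $\vset$ by the value-compatibility requirement of Announcement \ref{ann:morphisms in W}; hence so do $e,f,g,h$, and the pushout is one of pointed sets. The main obstacle is the uniqueness of the $W$-representative, which is precisely where non-instantaneity rules out the runaway chains that would otherwise merge distinct elements of $\Sup{\omega}$; everything else is bookkeeping.
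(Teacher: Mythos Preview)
Your proof is correct. The approach differs from the paper's, though both are elementary verifications that hinge on the non-instantaneity requirement in the same way.

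The paper verifies the universal property directly: given a cocone $(h',f')$ over the span, it writes down the unique mediating map $\alpha:=f'\big|_{\inp{Z}\amalg\Del{\psi}}\amalg h'\big|_{\outp{X}\amalg\Del{\phi}}$ by reading off the summands on which $f$ and $h$ are identities, and then checks the two remaining equalities $\alpha\circ f\big|_{\outp{Y}}=f'\big|_{\outp{Y}}$ and $\alpha\circ h\big|_{\inp{Y}}=h'\big|_{\inp{Y}}$, the first of which requires non-instantaneity for $s_\phi$. Your argument instead computes the pushout explicitly as the quotient $(\Sup{\phi}\amalg\Sup{\psi})/\!\sim$ and shows that each equivalence class meets $\Sup{\omega}$ in exactly one point, via the ``chain termination'' analysis of the functional graph of $\sigma$. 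The paper's route is slightly shorter (two component checks rather than a forest analysis), while yours gives a clearer picture of why $\Sup{\omega}$ is the right set of representatives. The $\vset$-compatibility argument is identical in both.

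One minor remark: the opening and closing comments about the forgetful functor $\Set_*\to\Set$ and the pushout being ``one of pointed sets'' are unnecessary and slightly misplaced. The lemma concerns a pushout of finite sets; the value-set assignments $\vset$ are auxiliary data that the maps are asked to respect, not a pointed structure on the sets $\Sup{\phi},\Sup{\psi}$ themselves. This does not affect your argument, which is carried out entirely in $\Set$ anyway.
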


\begin{proof}

We first show that the diagram commutes; here are the calculations on each component:
\begin{align*}
&f\circ e\big|_{\inp{Y}}=f\circ s_\psi\big|_{\inp{Y}}=h\circ g\big|_{\inp{Y}}\\
&f\circ e\big|_{\outp{Y}}=s_\phi\big|_{\outp{Y}}=h\circ g\big|_{\outp{Y}}.
\end{align*}

We now show that the diagram is a pushout. Suppose given a set $Q$ and a commutative solid-arrow diagram (i.e. with $h'\circ g=f'\circ e$):
$$
\xymatrix{
&&&Q\\
\inp{Y}\amalg\outp{X}\amalg\Del{\phi}\ar@/^1pc/[rrru]^{h'}\ar[rr]^{h}&&\inp{Z}\amalg\outp{X}\amalg\Del{\phi}\amalg\Del{\psi}\urlimit\ar@{-->}[ur]_\alpha\\
\inp{Y}\amalg\outp{Y}\ar[u]^{g}\ar[rr]_{e}&&\inp{Z}\amalg\outp{Y}\amalg\Del{\psi}\ar[u]_{f}\ar@/_2pc/[uur]_{f'}
}
$$
Looking at components on which $f$ and $h$ are identities, we see that if we want the equations $\alpha\circ f=f'$ and $\alpha\circ h=h'$ to hold, there is at most one way to define $\alpha\taking\Sup{\omega}\to Q$. Namely,
$$\alpha:=f'\big|_{\inp{Z}\amalg\Del{\psi}}\amalg h'\big|_{\outp{X}\amalg\Del{\phi}}.$$
To see that this definition works, it remains to check that $\alpha\circ f\big|_{\outp{Y}}=f'\big|_{\outp{Y}}$ and that $\alpha\circ h\big|_{\inp{Y}}=h'\big|_{\inp{Y}}$. For the first we use a non-instantaneity requirement (\ref{dia:non-instantaneity requirement}) to calculate:
\begin{align*}
\alpha\circ f\big|_{\outp{Y}}
=\alpha\circ s_\phi\big|_{\outp{Y}}
&=\alpha\big|_{\outp{X}\amalg\Del{\phi}}\circ s_\phi\big|_{\outp{Y}}\\
&=h'\circ s_\phi\big|_{\outp{Y}}\\
&=h'\circ g\big|_{\outp{Y}}=f'\circ e\big|_{\outp{Y}}=f'\big|_{\outp{Y}}
\end{align*}
Now we have shown that $\alpha\circ f=f'$ and the second calculation follows:
\begin{align*}
\alpha\circ h\big|_{\inp{Y}}
=\alpha\circ f\circ s_\psi\big|_{\inp{Y}}
=f'\circ s_\psi\big|_{\inp{Y}}
=f'\circ e\big|_{\inp{Y}}
=h'\circ g\big|_{\inp{Y}}
=h'\big|_{\inp{Y}}
\end{align*}

Each of $e,f,g,h$ commute with the respective functions $\vset$ because each is built solely out of identity functions and supplier assignments. This completes the proof.

\end{proof}

\begin{announcement}[Composition formula for $\mcW$]\label{ann:composition in W}
Let $m,n\in\Ob(\Fin)$ be finite sets and let $t\taking m\to n$ be a function. Let $Z\in\Ob(\mcW)$ be a black box, let $Y\taking n\to\Ob(\mcO)$ be an $n$-indexed set of black boxes, and let $X\taking m\to\Ob(\mcO)$ be an $m$-indexed set of black boxes.  For each element $i\in n$, write $m_i:=t^\m1(i)$ for the pre-image of $i$ under $t$, and write $X_i=X\big|_{m_i}\taking m_i\to\Ob(\mcO)$ for the restriction of $X$ to $m_i$. Then the composition formula 
$$
\circ\taking\mcW_n(Y;Z)\times\prod_{i\in n}\mcW_{m_i}(X_i;Y(i))\too\mcW_{m}(X;Z),
$$
is defined as follows.

Suppose that we are given morphisms $\phi_i\taking X_i\to Y(i)$ for each $i\in n$, which we gather into a morphism $\phi=\bigotimes_i\phi_i\taking X\to Y$ as in Remark \ref{rem:tensor}, and that we are also given a morphism $\psi\taking Y\to Z$. Then we have finite sets of delay nodes $\Del{\phi}$ and $\Del\psi$, and supplier assignments 
$$s_\phi\taking\Dem{\phi}\to\Sup{\phi}\hsp\tn{and}\hsp s_\psi\taking\Dem{\psi}\to\Sup{\psi}$$
as in Announcement \ref{ann:morphisms in W}.

We are tasked with defining a morphism $\omega:=\psi\circ\phi\taking X\to Z$. The set of demand wires and supply wires for $\omega$ are given in Table (\ref{dia:table of notation}). Thus our job is to define a set $\Del{\omega}$ and a supplier assignment $s_\omega\taking\Dem{\omega}\to\Sup{\omega}$.

We put $\Del{\omega}=\Del{\phi}\amalg\Del{\psi}$. It suffices to find a function
$$s_\omega\taking\outp{Z}\amalg\inp{X}\amalg\Del{\omega}\too\inp{Z}\amalg\outp{X}\amalg\Del{\omega},$$
which satisfies the two requirements of being a supplier assignment. We first define the function by making use of the following diagram, where the pushout is as in Lemma \ref{lemma:pushout lemma}:
\begin{align}\label{dia:pushout for supplier assignment}
\xymatrix{
\inp{X}\amalg\Del{\phi}\ar[r]^-{s_\phi\big|_{\inp{X}\amalg\Del{\phi}}}&\Sup{\phi}\ar[r]^h&\Sup{\omega}\urlimit\\
&\inp{Y}\amalg\outp{Y}\ar[u]^g\ar[r]_e&\Sup{\psi}\ar[u]_f\\
&&\outp{Z}\amalg\Del{\psi}\ar[u]_{s_\psi\big|_{\outp{Z}\amalg\Del{\psi}}}
}
\end{align}
Thus we can define a function
\begin{align}\label{dia:composition in W}
s_\omega=h\circ s_\phi\big|_{\inp{X}\amalg\Del{\phi}}\amalg f\circ s_\psi\big|_{\outp{Z}\amalg\Del{\psi}}.
\end{align}

We need to show that $s_\omega$ satisfies the two requirements of being a supplier assignment (see Announcement \ref{ann:morphisms in W}). 
\begin{enumerate}
\item The fact that $s_\omega$ commutes with the appropriate functions $\vset$ follows from the fact that $s_\phi, s_\psi, f,$ and $h$ do so (by Lemma \ref{lemma:pushout lemma}).
\item The fact that the non-instantaneity requirement holds for $s_\omega$, i.e. that $s_\omega(\outp{Z})\ss\outp{X}\amalg\Del{\omega}$, follows from the fact that it holds for $s_\psi$ and $s_\psi$ (see (\ref{dia:non-instantaneity requirement})), as follows.
\begin{align*}
s_\omega(\outp{Z})
&=f\circ s_\psi(\outp{Z})\\
&\ss f(\outp{Y}\amalg\Del{\psi})\\
&=s_\phi(\outp{Y})\amalg\Del{\psi}\\
&\ss\outp{X}\amalg\Del{\phi}\amalg\Del{\psi}=\outp{X}\amalg\Del{\omega}.
\end{align*}
\end{enumerate}

\end{announcement}

\subsection{Running example to ground ideas and notation regarding $\mcW$}\label{sec:ground W}

In this section we will discuss a few objects of $\mcW$ (i.e. black boxes), a couple morphisms of $\mcW$ (i.e. wiring diagrams), and a composition of morphisms. We showed objects and morphisms in more generality above (see Examples \ref{ex:black box} and \ref{ex:cookies}). Here we concentrate on a simple case, which we will take up again in Section \ref{sec:ground P} and which will eventually result in a propagator that outputs the Fibonacci sequence. First, we draw three objects, $X,Y,Z\in\Ob(\mcW)$.
\begin{align}\label{dia:grounding objects}
\begin{tikzpicture}
\path (0,0);
  \blackbox{(1.5,1.5)}{2}{1}{$X$}{.5}
  \draw (-.55,1) node{$a_X$};
  \draw (-.55,.5) node{$b_X$};
  \draw (2.05,.75) node{$c_X$};
\path (4,0);
  \blackbox{(1.5,1.5)}{1}{1}{$Y$}{.5}
  \draw (6.05,.75) node{$c_Y$};
  \draw (3.55,.75) node{$a_Y$};
\path (8,0);
  \blackbox{(1.5,1.5)}{0}{1}{$Z$}{.5}  
    \draw (10.05,.75) node{$c_Z$};
\end{tikzpicture}
\end{align}
These objects are not complete until the pointed sets associated to each wire are specified. Let $N:=(\NN,1)$ be the set of natural numbers with basepoint 1, and put
$$\vset(a_X)=\vset(b_X)=\vset(c_X)=\vset(a_Y)=\vset(c_Y)=\vset(c_Z)=N.$$
Now we draw two morphisms, i.e. wiring diagrams, $\phi\taking X\to Y$ and $\psi\taking Y\to Z$:
\begin{align}\label{dia:grounding morphisms}
\begin{tikzpicture}
\path (0,0);
  \blackbox{(4,3)}{1}{1}{$Y$}{0.5}
  \draw (-.25,1.75) node{$a_Y$};
  \draw (4.25,1.75) node{$c_Y$};
\path (1.5,1);
  \blackbox{(1,1)}{2}{1}{$X$}{0.5}  
  \draw (1.3,1.85) node{$a_X$};
  \draw (1.3,1.15) node{$b_X$};
  \draw (2.75,1.75) node{$c_X$};
 \directarc{(0.25,1.5)}{(1.25,1.66)}
 \directarc{(2.75,1.5)}{(3.75,1.5)}
 \draw (3.00,1.5) .. controls (3.5,1.5) and (3.5,.5) .. (2,.5);
 \draw (1.25,1.33) .. controls (.75,1.33) and (.75,.5) .. (2,.5);
 \draw (2,3.5) node{$X\To{\phi} Y$};
\end{tikzpicture}
\hspace{.75in}
\begin{tikzpicture}
\path (0,0);
  \blackbox{(4,3)}{0}{1}{$Z$}{0.5}
    \draw (4.25,1.75) node{$c_Z$};
\path (0.75,0.75);
  \blackbox{(2,1.5)}{1}{1}{$Y$}{0.5}  
  \draw (.5,1.75) node{$a_Y$};
  \draw (3.0,1.75) node{$c_Y$};
  \draw (3.3,1.2) node{$d_\psi$};
 \directarc{(3,1.5)}{(3.75,1.5)}
 \filldraw[black] (3.25,1.5) circle (2pt);
 \draw (3.25,1.5) .. controls (4.25,1.5) and (4.25,.325) .. (1.5,.325);
 \draw (.5,1.5) .. controls (0,1.5) and (0,.325) .. (1.5,.325);
 \draw (2,3.5) node{$Y\To{\psi} Z$};
\end{tikzpicture}
\end{align}
To clarify the notion of inputs, outputs, supplies, and demands, we provide two tables that lay out those sets in the case of (\ref{dia:grounding morphisms}). 
\begin{center}
\begin{tabular}{| l || l | l |}
\bhline
\multicolumn{3}{|c|}{Objects shown above}\\\bhline
{\bf Object}&$\inp{-}$&$\outp{-}$\\\bbhline
$X$&$\{a_X,b_X\}$&$\{c_X\}$\\\hline
$Y$&$\{a_Y\}$&$\{c_Y\}$\\\hline
$Z$&$\{\}$&$\{c_Z\}$\\\bhline
\end{tabular}
\hspace{.5in}
\begin{tabular}{| l || l | l | l |}
\bhline
\multicolumn{4}{|c|}{Morphisms shown above}\\\bhline
{\bf Morphism}&$\Del{-}$&$\Dem{-}$&$\Sup{-}$\\\bbhline
$\phi$&$\{\}$&$\{c_Y,a_X,b_X\}$&$\{a_Y,c_X\}$\\\hline
$\psi$&$\{d_\psi\}$&$\{c_Z,a_Y,d_\psi\}$&$\{c_Y,d_\psi\}$\\\bhline
\end{tabular}\\
\end{center}
To specify the morphism $\phi\taking X\to Y$ (respectively $\psi\taking Y\to Z$), we are required not only to provide a set of delay nodes $\Del{\phi}$, which we said was $\Del{\phi}=\emptyset$ (respectively, $\Del{\psi}=\{d_\psi\}$), but also a supplier assignment function $s_\phi\taking\Dem{\phi}\to\Sup{\phi}$ (resp., $s_\psi\taking\Dem{\psi}\to\Sup{\psi}$). Looking at the picture of $\phi$ (resp. $\psi$) above, the reader can trace backward to see how every demand wire is attached to some supply wire. Thus, the supplier assignment $s_\phi$ for $\phi\taking X\to Y$ is
$$c_Y\mapsto c_X,\hsp a_X\mapsto a_Y,\hsp b_X\mapsto c_X,$$
and the supplier assignment $s_\psi$ for $\psi\taking Y\to Z$ is
$$c_Z\mapsto d_\psi,\hsp a_Y\mapsto d_\psi,\hsp d_\psi\mapsto c_Y.$$

We now move on to the composition of $\psi$ and $\phi$. The idea is that we ``plug the $\phi$ diagram into the $Y$-box of the $\psi$ diagram, then erase the $Y$-box". We follow this in two steps below: on the left, we shrink down a copy of $\phi$ and fit it into the $Y$-box of $\psi$. On the right, we erase the $Y$-box:
\begin{center}
 \begin{tikzpicture}
  \draw (2,3.5) node{$X\To{\phi} Y\To{\psi} Z$};
  \path (0,0);
  \blackbox{(4,3)}{0}{1}{$Z$}{0.5}
  \path (0.75,0.75);
  \dashblackbox{(2,1.5)}{1}{1}{$Y$}{0.5}  
  \directarc{(3,1.5)}{(3.75,1.5)}
  \filldraw[black] (3.25,1.5) circle (2pt);
  \draw (3.25,1.5) .. controls (4.25,1.5) and (4.25,.325) .. (1.5,.325);
  \draw (.5,1.5) .. controls (0,1.5) and (0,.325) .. (1.5,.325);
  \path (1.5,1.25);
  \blackbox{(0.5,0.5)}{2}{1}{$X$}{0.25}  
  \directarc{(1,1.5)}{(1.375,1.5833)}
  \directarc{(2.125,1.5)}{(2.5,1.5)}
  \draw (2.25,1.5) .. controls (2.5,1.5) and (2.5,1) .. (1.75,1);
  \draw (1.375,1.42) .. controls (1.125,1.42) and (1.125,1) .. (1.75,1);
  \draw [->,snake=snake] (4.625,1.5) -- (5.625,1.5);
  \draw (8,3.5) node{$X\To{\psi\circ\phi} Z$};
  \path (6,0);
  \blackbox{(4,3)}{0}{1}{$Z$}{0.5}
  \directarc{(8.875,1.5)}{(9.75,1.5)}
  \draw [->] (8.125,1.5) -- (8.875,1.5);
  \filldraw[black] (9,1.5) circle (2pt);
  \draw (9.25,1.5) .. controls (10.25,1.5) and (10.25,.325) .. (7.5,.325);
  \draw (7.375,1.5833) .. controls (6,1.5833) and (6,.325) .. (7.5,.325);
  \path (7.5,1.25);
  \blackbox{(0.5,0.5)}{2}{1}{$X$}{0.25}  
  \draw (8.25,1.5) .. controls (8.5,1.5) and (8.5,1) .. (7.75,1);
  \draw (7.375,1.42) .. controls (7.125,1.42) and (7.125,1) .. (7.75,1);
 \end{tikzpicture}
\end{center}
The pushout (\ref{dia:pushout for supplier assignment}) ensures that wires of $Y$ connect wires inside (i.e. from $\phi$) to wires outside (i.e. from $\psi$). In other words, when we erase box $Y$, we do not erase the connections it made for us. We compute the pushout of the diagram 
$$
\{a_Y,c_X\}\From{a_Y\mapsto a_Y,\;\;c_Y\mapsto c_X}\{a_Y,c_Y\}\To{a_Y\mapsto d_\psi,\;\;c_Y\mapsto c_Y}\{c_Y,d_\psi\},
$$
defining $\Sup{\omega}$, to be isomorphic to $\{d_\psi,c_X\}.$ The supplier assignment $s_\omega\taking\Dem{\omega}=\{c_Z,a_X,b_X,d_\psi\}\to\{d_\psi,c_Z\}=\Sup{\omega}$ is given by 
\begin{align}\label{dia:compute composition example}
c_Z\mapsto d_\psi,\hsp a_X\mapsto d_\psi,\hsp b_X\mapsto c_X,\hsp d_\psi\mapsto c_X.
\end{align}
We take this example up again in Section \ref{sec:ground P}, where we show that installing a ``plus" function into box $X$ yields the Fibonacci sequence.

\subsection{Proof that the operad requirements are satisfied by $\mcW$}\label{sec:operad requirements}

We need to show that the announced operad $\mcW$ satisfies the requirements set out by Definition \ref{def:operad}. There are two such requirements: the first says that composing with the identity morphism has no effect, and the second says that composition is associative.

\begin{proposition}

The identity law holds for the announced structure of $\mcW$.

\end{proposition}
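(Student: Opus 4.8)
The plan is to verify the two equations of the identity law separately for an arbitrary morphism $\psi\taking Y\to Z$: the right identity $\psi\circ(\id_{Y(1)},\ldots,\id_{Y(n)})=\psi$ and the left identity $\id_Z\circ\psi=\psi$. In each case I would unwind the composition formula of Announcement \ref{ann:composition in W}, which builds $\omega=\psi\circ\phi$ from the pushout of Lemma \ref{lemma:pushout lemma}, and show that inserting the identity wiring diagram (Announcement \ref{ann:identity in W})—whose delay-node set is empty and whose supplier assignment is the identity—forces the pushout square to degenerate, so that the formula for $s_\omega$ collapses back onto the original supplier assignment.

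For the right identity, set $\phi=\bigotimes_i\id_{Y(i)}$. By Remark \ref{rem:tensor} this has $\Del{\phi}=\emptyset$, $X=Y$, and $s_\phi=\id_{\inp{Y}\amalg\outp{Y}}$. The key observation is that the left vertical map $g=\id_{\inp{Y}}\amalg s_\phi\big|_{\outp{Y}}$ of the pushout square is then the identity, hence an isomorphism; since the pushout of an isomorphism is an isomorphism, the map $f\taking\Sup{\psi}\to\Sup{\omega}$ is an isomorphism, and in fact equals the identity once we match $\Sup{\omega}=\inp{Z}\amalg\outp{Y}\amalg\Del{\psi}=\Sup{\psi}$. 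Tracking the formula $s_\omega=h\circ s_\phi\big|_{\inp{X}\amalg\Del{\phi}}\amalg f\circ s_\psi\big|_{\outp{Z}\amalg\Del{\psi}}$ and using $h\big|_{\inp{Y}}=(f\circ s_\psi)\big|_{\inp{Y}}=s_\psi\big|_{\inp{Y}}$, each summand reduces to the corresponding restriction of $s_\psi$, so $s_\omega=s_\psi$ on $\Dem{\omega}=\Dem{\psi}$.

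For the left identity, set $\psi=\id_Z$, so $Y=Z$, $\Del{\psi}=\emptyset$, and $s_\psi$ is the identity. The symmetric observation now applies: the lower horizontal map $e=s_\psi\big|_{\inp{Y}}\amalg\id_{\outp{Y}}$ is the identity on $\inp{Z}\amalg\outp{Z}$, so the pushout again degenerates and $h$ is the identity on $\Sup{\phi}=\Sup{\omega}$. Feeding this into the same formula, the first summand becomes $s_\phi\big|_{\inp{X}\amalg\Del{\phi}}$ and the second becomes $f\big|_{\outp{Z}}=s_\phi\big|_{\outp{Z}}$, which together recover $s_\phi$ on $\Dem{\omega}=\Dem{\phi}$.

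The routine part is the bookkeeping: matching the disjoint-union decompositions of $\Dem{-}$ and $\Sup{-}$ in Table \ref{dia:table of notation} under the identifications $X=Y$ (resp. $Y=Z$) and $\Del{\omega}=\Del{\psi}$ (resp. $\Del{\phi}$), and confirming the value-set assignments agree—immediate, since every map in sight is built from identities and the given supplier assignments, which commute with $\vset$ by Lemma \ref{lemma:pushout lemma}. The only genuine subtlety I anticipate is that a morphism of $\mcW$ is, by the footnote to Announcement \ref{ann:morphisms in W}, an isomorphism class of the data $(\Del{\psi},\vset,s_\psi)$; so strictly the two sides are equal as isomorphism classes, and I should check that the canonical comparison bijection of delay-node sets and the induced bijections on supply and demand sets commute with all the structure. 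Once this is acknowledged, both equalities hold on the nose, and there is no deeper obstacle beyond verifying that the degenerate pushout's structure maps are literally identities.
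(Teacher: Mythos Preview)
Your proposal is correct and follows essentially the same approach as the paper: both arguments hinge on observing that, upon inserting an identity morphism, one leg of the pushout square of Lemma~\ref{lemma:pushout lemma} becomes the identity, so the pushout degenerates and the resulting supplier assignment collapses back to the original. The paper draws the two degenerate pushout diagrams explicitly and then remarks that ``one checks using (\ref{dia:pushout morphisms}) that the induced supplier assignments are also in agreement,'' whereas you spell out that computation in more detail; you also make the isomorphism-class point (the footnote to Announcement~\ref{ann:morphisms in W}) explicit, which the paper invokes only in passing.
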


\begin{proof}

Let $X_1,\ldots,X_n$ and $Y$ be black boxes and let $\phi\taking X_1,\ldots,X_n\to Y$ be a morphism. We need to show that the following equations hold:
$$\phi\circ(\id_{x_1},\ldots,\id_{x_n})\qeq\phi\hsp\tn{and}\hsp\id_y\circ\phi\qeq\phi.$$

We are given a set $\Del{\phi}$ and a function $\vset\taking\Del{\phi}\to\Ob(\Set)$. Let $\id_X=\bigotimes_{i\in n}\id_{X_i}$, and form $\inp{X}$ and $\outp{X}$ as in Remark \ref{rem:tensor}. Thus we have
$$\Sup{\phi}=\inp{Y}\amalg\outp{X}\amalg\Del{\phi}\hsp\tn{and}\hsp\Dem{\phi}=\outp{Y}\amalg\inp{X}\amalg\Del{\phi}$$
and a supplier assignment $s_\phi\taking\Dem{\phi}\to\Sup{\phi}$.
For each $i\in n$ we have $\Sup{\id_{X_i}}=\Dem{\id_{X_i}}$, and the supplier assignments are the identity, so we have
$$\Sup{\id_{X}}=\Dem{\id_{X}}=\inp{X}\amalg\outp{X}$$ 
The supplier assignment $s_{\id_{X}}$ is the identity function. Similarly, $\Sup{\id_{Y}}=\Dem{\id_Y}=\inp{Y}\amalg\outp{Y}$, and the supplier assignment $s_{\id_{Y}}$ is the identity function.

Let $\omega=\phi\circ(\id_{X_1},\ldots,\id_{X_n})$ and $\omega'=\id_Y\circ\phi$. Then the relevant pushouts become
$$
\xymatrix@=30pt{
\inp{X}\ar[r]^-{\id\big|_{\inp{X}}}&\Sup{\id_X}\ar[r]^{s_\phi\big|{\inp{X}}\amalg\id\big|_{\outp{X}}}&\Sup{\omega}\urlimit\\
&\inp{X}\amalg\outp{X}\ar@{=}[u]\ar[r]&\Sup{\phi}\ar[u]\\
&&\outp{Y}\amalg\Del{\phi}\ar[u]_{s_\phi\big|_{\outp{Y}\amalg\Del{\phi}}}
}
$$
$$
\xymatrix@=30pt{
\inp{X}\amalg\Del{\phi}\ar[r]^{s_\phi\big|_{\inp{X}\amalg\Del{\phi}}}&\Sup{\phi}\ar[r]&\Sup{\omega'}\urlimit\\
&\inp{Y}\amalg\outp{Y}\ar@{=}[r]\ar[u]&\Sup{\id_Y}\ar[u]_{\id\big|_{\inp{Y}}\amalg s_\phi\big|_{\outp{Y}}}\\
&&\outp{Y}\ar[u]_{\id\big|_{\outp{Y}}}
}
$$
The pushout of an isomorphism is an isomorphism so we have isomorphisms $\Sup{\phi}\iso\Sup{\omega}$ and $\Sup{\phi}\iso\Sup{\omega'}$. 
\ffootnote{-3pt}
{Note that a morphism (e.g. $\omega$) in $\mcW$ are defined only up to isomorphism class of tuples $(\Del{\omega},\vset,s_\omega)$, see Announcement \ref{ann:morphisms in W}.} 
In both the case of $\omega$ and $\omega'$, one checks using (\ref{dia:pushout morphisms}) that the induced supplier assignments are also in agreement (up to isomorphism), $s_{\omega}=s_\phi=s_{\omega'}.$

\end{proof}

\begin{proposition}

The associativity law holds for the announced structure of $\mcW$.

\end{proposition}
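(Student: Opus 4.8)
The plan is to reduce the associativity law to two comparisons: of the delay-node sets and of the supplier assignments of the two triple composites. Using the tensor-product repackaging of Remark \ref{rem:tensor}, it suffices to treat three composable morphisms $W\To{\chi}X\To{\phi}Y\To{\psi}Z$ and to prove that $(\psi\circ\phi)\circ\chi$ and $\psi\circ(\phi\circ\chi)$ agree as morphisms of $\mcW$, i.e.\ up to the isomorphism-of-tuples equivalence recorded in Announcement \ref{ann:morphisms in W}; the reindexing bookkeeping of Definition \ref{def:operad}(2) is absorbed into the associativity of disjoint unions of the fibers of $m\To{s}n\To{t}p$. Since $\Del{\omega}=\Del{\phi}\amalg\Del{\psi}$ for any composite, both triple composites have delay set canonically isomorphic to $\Del{\chi}\amalg\Del{\phi}\amalg\Del{\psi}$, so the only real content is the equality of supplier assignments.

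For the supplier assignments I would argue that both composites are governed by a single colimit. Write $A=\inp{X}\amalg\outp{X}$ and $B=\inp{Y}\amalg\outp{Y}$ for the two interface sets. By Lemma \ref{lemma:pushout lemma}, $\Sup{\phi\circ\chi}$ is the pushout of $\Sup{\chi}\From{g_1}A\To{e_1}\Sup{\phi}$ and $\Sup{\psi\circ\phi}$ is the pushout of $\Sup{\phi}\From{g}B\To{e}\Sup{\psi}$, with all legs given by the formulas (\ref{dia:pushout morphisms}). I claim both triple composites compute the colimit of the single zig-zag diagram
$$\Sup{\chi}\From{g_1}A\To{e_1}\Sup{\phi}\From{g}B\To{e}\Sup{\psi}.$$
The key bookkeeping step is to verify that the comparison maps used in the second pushout of each bracketing factor through the first: that the map $B\to\Sup{\phi\circ\chi}$ appearing in $\psi\circ(\phi\circ\chi)$ equals $f_1\circ g$, where $f_1\taking\Sup{\phi}\to\Sup{\phi\circ\chi}$ is the pushout leg, and dually that the map $A\to\Sup{\psi\circ\phi}$ appearing in $(\psi\circ\phi)\circ\chi$ equals $h\circ e_1$, where $h\taking\Sup{\phi}\to\Sup{\psi\circ\phi}$ is the pushout leg. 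Each identity is a direct unwinding of the composition formula (\ref{dia:composition in W}) restricted to $\outp{Y}$ (resp.\ $\inp{X}$). Granting these two identities, the pasting lemma for pushouts identifies each iterated pushout with the colimit of the zig-zag above, yielding a canonical isomorphism $\Sup{(\psi\circ\phi)\circ\chi}\iso\Sup{\psi\circ(\phi\circ\chi)}$.

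It then remains to check that the two supplier assignments $s\taking\outp{Z}\amalg\inp{W}\amalg\Del{\omega}\to\Sup{\omega}$ agree under this isomorphism. I would verify this componentwise on $\outp{Z}$, on $\inp{W}$, and on each of $\Del{\chi},\Del{\phi},\Del{\psi}$, in each case expanding (\ref{dia:composition in W}) twice and rewriting every occurrence of an intermediate pushout leg in terms of the legs $\Sup{\chi},\Sup{\phi},\Sup{\psi}\to(\text{colimit})$ of the common zig-zag. Because both bracketings resolve a demand wire by following the same chain of supplier-assignment arrows $s_\chi,s_\phi,s_\psi$ into the common colimit, the resulting maps coincide; the non-instantaneity hypotheses (\ref{dia:non-instantaneity requirement}) are exactly what guarantee that the pushouts of Lemma \ref{lemma:pushout lemma} exist at each stage, so they remain in force throughout.

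I expect the main obstacle to be the componentwise verification of the supplier assignments, rather than the colimit identification. The difficulty is that the composition formula (\ref{dia:composition in W}) nests the ``reach-through'' expressions $f\circ s_\psi$ and $h\circ s_\phi$; under iteration these become doubly nested, and one must carefully track a demand wire that is routed through a delay node or an output of one intermediate box into a supply of another. Organizing this so that the two nesting orders visibly produce the same element — ideally by checking that both agree with the legs of the zig-zag colimit rather than comparing the two expressions directly — is where the care is required.
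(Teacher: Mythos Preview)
Your proposal is correct and follows essentially the same route as the paper. The paper uses your morphism $\tau$ in place of $\chi$, writes down the four pushout squares for the two bracketings, verifies exactly your two ``key bookkeeping'' identities (in its notation, $e_{\psi\circ\phi,\tau}=h_{\psi,\phi}\circ e_{\phi,\tau}$ and $g_{\psi,\phi\circ\tau}=f_{\phi,\tau}\circ g_{\psi,\phi}$), pastes the squares into a single $2\times 2$ pushout diagram, and invokes the pasting lemma; your zig-zag colimit is just another name for that iterated pushout, and your componentwise check of the supplier assignment is what the paper abbreviates as ``easy to check using the formulas (\ref{dia:composition in W}) and (\ref{dia:pushout morphisms}).'' One small correction: the non-instantaneity requirement is not what makes the pushouts exist (they always do in $\Set$); rather it is used inside Lemma \ref{lemma:pushout lemma} to identify the given square as a pushout and to ensure the map $f$ lands where claimed.
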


\begin{proof}

Suppose we are given morphisms $\tau\taking W\to X$, $\phi\taking X\to Y$ and $\psi\taking Y\to Z$. We must check that $(\psi\circ\phi)\circ\tau=\psi\circ(\phi\circ\tau)$. With notation as in Lemma \ref{lemma:pushout lemma}, pushout square defining $\phi\circ\tau$ and then $\psi\circ(\phi\circ\tau)$ are these:
$$
\xymatrix{
\Sup{\tau}\ar[r]^{h_{\phi,\tau}}&\Sup{\phi\circ\tau}\urlimit\\
\inp{X}\amalg\outp{X}\ar[u]^{g_{\phi,\tau}}\ar[r]_-{e_{\phi,\tau}}&\Sup{\phi}\ar[u]_{f_{\phi,\tau}}
}
\hspace{.7in}
\xymatrix{
\Sup{\phi\circ\tau}\ar[r]^{h_{\psi,\phi\circ\tau}}&\Sup{\psi\circ(\phi\circ\tau)}\urlimit\\
\inp{Y}\amalg\outp{Y}\ar[r]_-{e_{\psi,\phi\circ\tau}}\ar[u]^{g_{\psi,\phi\circ\tau}}&\Sup{\psi}\ar[u]_{f_{\psi,\phi\circ\tau}}
}
$$
whereas the pushout square defining $\psi\circ\phi$ and then $(\psi\circ\phi)\circ\tau$ are these:
$$
\xymatrix{
\Sup{\phi}\ar[r]^{h_{\psi,\phi}}&\Sup{\psi\circ\phi}\urlimit\\
\inp{Y}\amalg\outp{Y}\ar[r]_-{e_{\psi,\phi}}\ar[u]^{g_{\psi,\phi}}&\Sup{\psi}\ar[u]_{f_{\psi,\phi}}
}
\hspace{.7in}
\xymatrix{
\Sup{\tau}\ar[r]^{h_{\psi\circ\phi,\tau}}&\Sup{(\psi\circ\phi)\circ\tau}\urlimit\\
\inp{X}\amalg\outp{X}\ar[u]^{g_{\psi\circ\phi,\tau}}\ar[r]_-{e_{\psi\circ\phi,\tau}}&\Sup{\psi\circ\phi}\ar[u]_{f_{\psi\circ\phi,\tau}}
}
$$
One checks directly from the formulas (\ref{dia:pushout morphisms}) that $e_{\psi\circ\phi,\tau}=h_{\psi,\phi}\circ e_{\phi,\tau}$ as functions $\inp{X}\amalg\outp{X}\to\Sup{\psi\circ\phi}$, and that $g_{\psi,\phi\circ\tau}=f_{\phi,\tau}\circ g_{\psi,\phi}$ as functions $\inp{Y}\amalg\outp{Y}\to\Sup{\phi\circ\tau}$.

We combine them into the following pushout diagram:
$$\xymatrix{
\Sup{\tau}\ar[r]^{h_{\phi,\tau}}&\Sup{\phi\circ\tau}\ar[r]^{h_{\psi,\phi\circ\tau}}\urlimit&\Sup{\psi\circ\phi\circ\tau}\urlimit\\
\inp{X}\amalg\outp{X}\ar[u]^{g_{\phi,\tau}}\ar[r]_-{e_{\phi,\tau}}&\Sup{\phi}\ar[u]^{f_{\phi,\tau}}\ar[r]_{h_{\psi,\phi}}&\Sup{\psi\circ\phi}\ar[u]_{f_{\psi\circ\phi,\tau}}\urlimit\\
&\inp{Y}\amalg\outp{Y}\ar[r]_-{e_{\psi,\phi}}\ar[u]^{g_{\psi,\phi}}&\Sup{\psi}\ar[u]_{f_{\psi,\phi}}
}
$$
The pasting lemma for pushout squares ensures that the set labeled $\Sup{\psi\circ\phi\circ\tau}$ is isomorphic to $\Sup{\psi\circ(\phi\circ\tau)}$ and to $\Sup{(\psi\circ\phi)\circ\tau}$, so these are indeed isomorphic to each other. It is also easy to check using the formulas provided in (\ref{dia:composition in W}) and (\ref{dia:pushout morphisms}) that the supplier assignments 
$$\Dem{\psi\circ\phi\circ\tau}=\outp{Z}\amalg\inp{W}\amalg\Del{\tau}\amalg\Del{\phi}\amalg\Del{\psi}\too\Sup{\psi\circ\phi\circ\tau}$$ 
agree regardless of the order of composition. This proves the result.

\end{proof}

\section{$\mcP$, the algebra of propagators on $\mcW$}\label{sec:algebra}

In this section we will introduce our algebra of propagators on $\mcW$. This is where form meets function: the form called ``black box" is a placeholder for a propagator, i.e. a function, that carries input streams to output streams, and the form called ``wiring diagram" is a placeholder for a circuit that links propagators together to form a larger propagator. 

To formalize these ideas we introduce the mathematical notion of operad algebra in Section \ref{sec:algebra def}. In Section \ref{sec:lists streams props} we discuss some preliminaries on lists and streams, and define our notion of historical propagator. In Section \ref{sec:algebra announcements} we announce our algebra of these propagators and in Section \ref{sec:ground P} we ground it in our running example. Finally in Section \ref{sec:algebra requirements} we prove that the announced structure really satisfies the requirements of being an algebra.

\subsection{Definition and basic examples of algebras}\label{sec:algebra def}

In this section we give the formal definition for algebras over an operad. 

\begin{definition}

Let $\mcO$ be an operad. An {\em $\mcO$-algebra}, denoted $F\taking\mcO\to\Sets$, is defined as follows: One announces some constituents (A. map on objects, B. map on morphisms) and proves that they satisfy some requirements (1. identity law, 2. composition law). Specifically,
\begin{enumerate}[\hsp A.]
\item one announces a function $\Ob(F)\taking\Ob(\mcO)\to\Ob(\Sets)$.
\item for each object $y\in\Ob(\mcO)$, finite set $n\in\Ob(\Fin)$, and $n$-indexed set of objects $x\taking n\to\Ob(\mcO)$, one announces a function 
$$F_n\taking\mcO_n(x;y)\to\Hom_\Sets(Fx;Fy).$$
\end{enumerate}
As in B. above, we often denote $\Ob(F)$, and also each $F_n$, simply by $F$. 

These constituents (A,B) must satisfy the following requirements:
\begin{enumerate}[\hsp 1.]
\item For each object $x\in\Ob(\mcO)$, the equation $F(\id_x)=\id_{Fx}$ holds.
\item Let $s\taking m\to n$ be a morphism in $\Fin$. Let $z\in\Ob(\mcO)$ be an object, let $y\taking n\to\Ob(\mcO)$ be an $n$-indexed set of objects, and let $x\taking m\to\Ob(\mcO)$ be an $m$-indexed set of objects. Then, with notation as in Definition \ref{def:operad}, the following diagram of sets commutes:
\begin{align}\label{dia:operad functor on composition}
\xymatrix{
\mcO_n(y;z)\times\prod_{i\in n}\mcO_{m_i}(x_i;y(i))\ar[d]_-F\ar[r]^-\circ
&
\mcO_m(x;z)\ar[d]^F\\
\Hom_\Sets(Fy;Fz)\times\prod_{i\in n}\Hom_\Sets(Fx_i;Fy(i))\ar[r]_-\circ
&
\Hom_\Sets(Fx;Fz)
}
\end{align}
\end{enumerate}

\end{definition}

\begin{example}\label{ex:commutative monoids}

Let $\mcE$ be the commutative operad of Example \ref{ex:commutative operad}. An $\mcE$-algebra $S\taking\mcE\to\Sets$ consists of a set $M\in\Ob(\Set)$, and for each natural number $n\in\NN$ a morphism $\mu_n\taking M^n\to M$. It is not hard to see that, together, the morphism $\mu_2\taking M\times M\to M$ and the element $\mu_0\taking\singleton\to M$ give $M$ the structure of a commutative monoid. Indeed, the associativity and unit axioms are encoded in the axioms for operads and their morphisms. The commutativity of multiplication arises by applying the commutative diagram (\ref{dia:operad functor on composition}) in the case $s\taking \{1,2\}\to \{1,2\}$ is the non-identity bijection, as discussed in Remark \ref{rem:symmetry}.

\end{example}

\subsection{Lists, streams, and historical propagators}\label{sec:lists streams props}

In this section we discuss some background on lists. We also develop our notion of historical propagator, which formalizes the idea that a machine's output at time $t_0$ can depend only on what has happened previously, i.e. for time $t<t_0$. While strictly not necessary for the development of this paper, we also discuss the relation of historical propagators to streams.

Given a set $S$, an {\em $S$-list} is a pair $(t,\ell)$, where $t\in\NN$ is a natural number and $\ell\taking\{1,2,\ldots,t\}\to S$ is a function. We denote the set of $S$-lists by $\List(S)$. We call $t$ the {\em length} of the list; in particular a list may be empty because we may have $t=0$. Note that there is a canonical bijection
$$\List(S)\iso\coprod_{t\in\NN}S^t.$$
We sometimes denote a list simply by $\ell$ and write $|\ell|$ to denote its length; that is we have the component projection $|\cdot|\taking\List(A)\to\NN$. We typically write-out an $S$-list as $\ell=[\ell(1),\ell(2),\ldots,\ell(t)]$, where each $\ell(i)\in S$. We denote the empty list by $[\;]$. Given a function $f\taking S\to S'$, there is an induced function $\List(f)\taking\List(S)\to\List(S')$ sending $(t,\ell)$ to $(t,f\circ\ell)$; in the parlance of computer science $\List(f)$ is the function that ``maps $f$ over $\ell$". 

Given sets $X_1,\ldots,X_k\in\Ob(\Set)$, an element in $\List(\prod_{1\leq i\leq k}X_i)$ is a list of $k$-tuples. Given sets $A$ and $B$ there is a bijection
$$\zipwith\taking\List(A)\times_\NN\List(B)\Too{\iso}\List(A\times B),$$
where on the left we have formed the fiber product of the diagram $\List(A)\To{|\cdot|}\NN\From{|\cdot|}\List(B)$. We call this bijection {\em zipwith}, following the terminology from modern functional programming languages. The idea is that an $A$-list $\ell_A$ can be combined with a $B$-list $\ell_B$, as long as they have the same length $|\ell_A|=|\ell_B|$; the result will be an $(A\times B)$-list $\ell_A\zipwith\ell_B$ again of the same length.  We will usually abuse this distinction and freely identify $\List(A\times B)\iso\List(A)\times_\NN\List(B)$ with its image in $\List(A)\times\List(B)$. For example, we may consider the $\NN\times\NN$-list
\[[(1,2),(3,4),(5,6)]=[1,3,5]\zipwith[2,4,6]\]
as an element of $\List(\NN)\times\List(\NN)$. Hopefully this will not cause confusion.

Let $\List_{\geq 1}(S)\ss\List(S)$ denote the set $\amalg_{t\geq 1}S^t$. We write $\partial_S\taking\List_{\geq1}(S)\to\List(S)$ to denote the function that drops off the last entry. More precisely, for any integer $t\geq 1$ if we consider $\ell$ as a function $\ell\taking\{1,2,\ldots,t\}\to S$, then the list $\partial_S\ell$ is given by pre-composition with the subset consisting of the first $t-1$ elements, 
$$\{1,2,\ldots,t-1\}\inj\{1,2,\ldots,t\}\To{\ell}S.$$ 
For example we have $\partial[0,1,4,9,16]=[0,1,4,9]$.

\begin{definition}\label{def:historical}

Let $R,S$ be pointed sets and let $n\in\NN$. A {\em $n$-historical propagator $f$ from $R$ to $S$} is a function $f\taking\List(R)\to\List(S)$ satisfying the following conditions:
\begin{enumerate}
\item If a list $\ell\in\List(R)$ has length $|\ell|=t$, then $|f(\ell)|=t+n$, 
\item If $\ell\in\List(R)$ is a list of length $t\geq 1$, then 
$$\partial_Sf(\ell)=f(\partial_R\ell).$$
\end{enumerate}
We denote the set of $n$-historical propagators from $R$ to $S$ by $\Hist^n(R,S)$. If $f$ is $n$-historical for some $n\geq 0$ we say that $f$ is {\em historical}.

\end{definition}

We usually drop the subscript from the symbol $\partial_-$, writing e.g. $\partial f(\ell)=f(\partial\ell)$.

\begin{example}\label{ex:delay}

Let $S$ be a pointed set and let $n\in\NN$ be a natural number. Define an $n$-historical propagator $\delta^n\in\Hist^n(S,S)$ as follows for $\ell\in\List(S)$: 
$$\delta^n(\ell)(i)=
\begin{cases}
*&\tn{ if }1\leq i\leq n\\
\ell(i-n)&\tn{ if }n+1\leq i\leq t+n
\end{cases}
$$
We call $\delta^n$ the {\em $n$-moment delay function}. For example if $n=3, S=\{a,b,c,d\}\amalg\{*\}$, and $\ell=[a,a,b,*,d]\in S^5$ then $\delta^3(S)=[*,*,*,a,a,b,*,d]\in S^8$.

\end{example}

The following Lemma describes the behavior of historical functions.

\begin{lemma}\label{lemma:little facts on historicality}

Let $S,S',S'',T,T'\in\Set_*$ be pointed sets.
\begin{enumerate}
\item Let $f\taking S\to T$ be a function. The induced function $\List(f)\taking\List(S)\to\List(T)$ is $0$-historical.
\item Given $n$-historical propagators $q\in\Hist^n(S,S')$ and $r\in\Hist^n(T,T')$, there is an induced $n$-historical propagator $q\times r\in\Hist^n(S\times T,S'\times T')$.
\item Given $q\in\Hist^m(S,S')$ and $q'\in\Hist^{n}(S',S'')$, then $q'\circ q\taking\List(S)\to\List(S'')$ is $(m+n)$-historical.
\item If $n\geq 1$ is an integer and $q\in\Hist^n(S,S')$ is $n$-historical then $\partial q\taking\List(S)\to\List(S')$ is $(n-1)$-historical.
\end{enumerate}

\end{lemma}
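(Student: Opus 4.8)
The plan is to verify, for each of the four constructions, the two defining conditions of Definition~\ref{def:historical}: the length condition (1) and the commutation-with-$\partial$ condition (2). In every case condition (1) is a routine length count, so the substance lies in condition (2), which in each part reduces to the observation that $\partial$ interacts compatibly with the relevant operation on lists. For part (1), the map $\List(f)$ sends a list of length $t$ to a list of length $t$, giving $n=0$; and since restricting the composite $f\circ\ell$ to its first $t-1$ entries equals $f$ composed with the restriction $\partial_S\ell$, condition (2) holds on the nose.

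For part (2) I would first define $q\times r$ using the zipwith isomorphism $\List(S\times T)\iso\List(S)\times_\NN\List(T)$: decompose an input $\ell=\ell_S\zipwith\ell_T$ into two lists of equal length, apply $q$ and $r$ separately, and reassemble via $q(\ell_S)\zipwith r(\ell_T)$. This is legitimate because $q$ and $r$ are both $n$-historical, so their outputs again share a common length $t+n$ and can be zipped. Condition (1) is then immediate. The crux of condition (2) is the identity $\partial(\ell_S\zipwith\ell_T)=\partial\ell_S\zipwith\partial\ell_T$, i.e. that dropping the last entry commutes with zipping; granting this, condition (2) follows by applying the historicality of $q$ and $r$ componentwise.

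Parts (3) and (4) are then short chases. For part (3) the lengths add to $t+(m+n)$, and condition (2) is obtained by writing $\partial(q'\circ q)(\ell)=\partial\,q'(q(\ell))=q'(\partial\,q(\ell))=q'(q(\partial\ell))$, where the middle step uses that $q(\ell)$ has length $t+m\geq1$ so that the historicality of $q'$ applies, and the last step uses the historicality of $q$. For part (4) I would set $(\partial q)(\ell):=\partial\bigl(q(\ell)\bigr)$, which is well-defined since $q(\ell)$ has length $t+n\geq1$; the output length is $t+(n-1)$, so condition (1) holds, and condition (2) follows directly from the historicality of $q$, the hypothesis $n\geq1$ guaranteeing that the iterated $\partial$ appearing in the verification is always applied to a nonempty list.

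The main obstacle I anticipate is the bookkeeping in part (2): making precise the definition of $q\times r$ through the zipwith identification (which the excerpt treats somewhat informally) and verifying that $\partial$ commutes with zipwith. Beyond that, the recurring point requiring care across all parts is tracking that the lists to which $\partial$ is applied are nonempty, so that $\partial$ is defined at all; this is automatic from the length condition (1) together with the standing hypotheses ($n\geq0$ in general, $n\geq1$ in part (4)).
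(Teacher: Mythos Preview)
Your proposal is correct and follows essentially the same approach as the paper: for each part you verify the length condition and then check commutation with $\partial$, with part (1) reducing to associativity of composition, part (2) using the identification $(S\times T)^t\cong S^t\times T^t$ (the paper's phrasing of your zipwith argument), and parts (3) and (4) being the same short chases you give. Your extra care about nonemptiness and about $\partial$ commuting with zipwith is more explicit than the paper, but the substance is identical.
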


\begin{proof}
We show each in turn.
\begin{enumerate}
\item Let $\ell\in\List(S)$ be a list of length $t$. Clearly, $\List(f)$ sends $\ell$ to a list of length $t$. If $t\geq 1$ then the fact that $\partial\List(f)(\ell)=\List(f)(\partial\ell)$ follows by associativity of composition in $\Set$. That is, $\List(f)(\ell)$ is the right-hand composition and $\partial\ell$ is the left-hand composition below:
$$\{1,\ldots,t-1\}\inj\{1,\ldots,t\}\To{\ell}S\To{f}T.$$
\item On the length $t$ component we use the function $(S\times T)^t=S^t\times T^t\To{q\times r}S^{t+n}\times T^{t+n}=(S\times T)^{t+n}$. As necessary, we have 
$$\partial\circ(q\times r)=\partial q\times\partial r=q\partial\times r\partial=(q\times r)\circ\partial.$$
\item This is straightforward; for example the second condition is checked
$$\partial q'(q(\ell))=q'(\partial q(\ell))=q'(q(\partial\ell)).$$
\item On lengths we indeed have $|\partial q(\ell)|=|q(\ell)|-1=|\ell|+n-1$. If $|\ell|=t\geq 1$ then $\partial(\partial q)(\ell)=\partial(\partial q(\ell))=\partial q(\partial\ell)$ because $q$ is historical.
\end{enumerate}
\end{proof}

\begin{definition}

Let $S$ be a pointed set. An {\em $S$-stream} is a function $\sigma\taking\NN_{\geq 1}\to S$. We denote the set of $S$-streams by $\Strm{S}$. 

For any natural number $t\in\NN$, let $\sigma\strst{t}\in\List(S)$ denote the list of length $t$ corresponding to the composite $\{1,2,\ldots,t\}\inj\NN_{\geq 1}\To{\sigma}S$ and call it {\em the $t$-restriction of $S$}. 

\end{definition}

\begin{lemma}

Let $S$ be a pointed set, let $\{*\}$ be a pointed set with one element, and let $n\in\NN$ be a natural number. There is a bijection 
$$\Hist^n(\{*\},S)\To{\iso}\Strm{S}.$$

\end{lemma}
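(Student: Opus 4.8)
The plan is to construct the bijection explicitly in both directions and verify they are mutually inverse. The key observation is that an $n$-historical propagator $f\taking\List(\{*\})\to\List(S)$ is almost entirely rigid: since $\{*\}$ is a one-element pointed set, there is exactly one list of each length in $\List(\{*\})$. Write $\ell_t$ for the unique list in $\{*\}^t$. Thus the data of $f$ amounts to choosing, for each $t\in\NN$, the list $f(\ell_t)\in\List(S)$, which by condition (1) of Definition \ref{def:historical} must have length exactly $t+n$. So a historical propagator from $\{*\}$ is nothing but a compatible family of $S$-lists of lengths $n,n+1,n+2,\ldots$, where ``compatible'' is the condition (2) that $\partial_S f(\ell_t)=f(\ell_{t-1})$ (using $\partial_{\{*\}}\ell_t=\ell_{t-1}$).

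First I would define the forward map $\Phi\taking\Hist^n(\{*\},S)\to\Strm{S}$. Given $f$, the compatibility condition (2) says the lists $f(\ell_t)$ form a coherent tower under the truncation maps $\partial_S$: each $f(\ell_{t+1})$ restricts to $f(\ell_t)$ on its first $t+n$ entries. I would define a stream $\sigma=\Phi(f)\taking\NN_{\geq1}\to S$ by setting $\sigma(k):=f(\ell_t)(k)$ for any $t$ with $t+n\geq k$ (equivalently $t\geq k-n$); condition (2) guarantees this is independent of the choice of such $t$, so $\sigma$ is well-defined. Conversely, I would define $\Psi\taking\Strm{S}\to\Hist^n(\{*\},S)$ by sending a stream $\sigma$ to the propagator $f_\sigma$ with $f_\sigma(\ell_t):=\sigma\strst{t+n}$, the $(t+n)$-restriction of $\sigma$. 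One checks that $f_\sigma$ is $n$-historical: condition (1) holds since $|\sigma\strst{t+n}|=t+n$, and condition (2) holds because truncating a restriction $\sigma\strst{t+n}$ by one entry yields $\sigma\strst{t+n-1}=f_\sigma(\ell_{t-1})$.

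The bulk of the work is checking $\Phi$ and $\Psi$ are mutually inverse, which is routine once the definitions are set. For $\Phi\circ\Psi=\id$: starting from $\sigma$, we have $\Phi(f_\sigma)(k)=f_\sigma(\ell_t)(k)=\sigma\strst{t+n}(k)=\sigma(k)$ for any valid $t$, recovering $\sigma$. For $\Psi\circ\Phi=\id$: starting from $f$, the propagator $f_{\Phi(f)}$ sends $\ell_t$ to $\Phi(f)\strst{t+n}$, whose $k$th entry is $\Phi(f)(k)=f(\ell_t)(k)$ for all $k\leq t+n$; since $f(\ell_t)$ has length exactly $t+n$, this agrees with $f(\ell_t)$ entry-for-entry, so $f_{\Phi(f)}=f$.

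I do not anticipate a genuine obstacle here, as the proof is essentially bookkeeping driven by the triviality of $\List(\{*\})$. The one point requiring care is the well-definedness of $\Phi(f)$: verifying that $f(\ell_t)(k)$ truly does not depend on the choice of $t\geq k-n$, which follows by a short induction on $t$ using condition (2). Beyond that, the only bookkeeping subtlety is keeping the index shift by $n$ consistent across both directions of the correspondence and in the restriction notation $\sigma\strst{t+n}$.
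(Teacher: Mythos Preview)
Your proposal is correct and takes essentially the same approach as the paper: both recognize that an $n$-historical propagator out of $\{*\}$ is precisely a compatible tower of $S$-lists of lengths $n,n+1,n+2,\ldots$, which is exactly the data of a stream. The only difference is presentational---the paper packages this as the statement that $\Strm{S}\cong\lim_{t}\Hom(\{1,\ldots,t+n\},S)$ and invokes the construction of limits in $\Set$, whereas you unpack that limit explicitly by writing down the maps $\Phi,\Psi$ and checking they are mutually inverse.
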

 
\begin{proof}

For any natural number $t\in\NN$, let $\disc{t}=\{1,2,\ldots,t\}\in\Ob(\Set)$. Let $[\NN]$ be the poset (considered as a category) with objects $\{\disc{t}\|t\in\NN\}$, ordered by inclusion of subsets. For any $n\in\NN$ there is a functor $[\NN]\to\Set$ sending $\ul{t}\in\Ob([\NN])$ to $\{1,2,\ldots,t+n\}\in\Ob(\Set)$.

For any $n\in\NN$, there is a bijection $\NN\iso\colim_{t\in[\NN]}\{1,2,\ldots,t+n\}.$
Thus we have a bijection
$$\Strm{S}=\Hom_\Set(\NN_{\geq 1},S)\iso\lim_{t\in[\NN]}\Hom_\Set(\{1,2,\ldots,t+n\},S).$$ 

On the other hand, an $n$-historical function $f\taking\List(\{*\})\to\List(S)$ acts as follows. For each $t\in\NN$ and list $[*,\ldots,*_t]$ of length $t$, it assigns a list $f([*,\ldots,*_t])\in\List(S)$ of length $t+n$, i.e. a function $\{1,\ldots,t+n\}\to S$, such that $f([*,\ldots,*_{t-1}])$ is the restriction to the subset $\{1,\ldots,t+n-1\}$. 

The fact that these notions agree follows from the construction of limits in the category $\Set$.

\end{proof}

Below we define an awkward-sounding notion of {\em $n$-historical stream propagator}. The idea is that a function carrying streams to streams is $n$-historical if, for all $t\in\NN$, its output up to time $t+n$ depends only on its input up to time $t$. In Proposition \ref{prop:historicality aligns} we show that this notion of historicality for streams is equivalent to the notion for lists given in Definition \ref{def:historical}.

\begin{definition}

Let $S$ and $T$ be pointed sets, and let $n\in\NN$ be a natural number. A function $f\taking\Strm{S}\to\Strm{T}$ is called an {\em $n$-historical stream propagator} if, given any natural number $t\in\NN$ and any two streams $\sigma,\sigma'\in\Strm{S}$, if $\sigma\strst{t}=\sigma'\strst{t}$ then $f(\sigma)\strst{t+n}=f(\sigma')\strst{t+n}$. Let $\Hist_{strm}^n(S,T)$ denote the set of $n$-historical stream propagators $\Strm{S}\to\Strm{T}$.

\end{definition}

\begin{proposition}\label{prop:historicality aligns}

Let $S$ and $T$ be pointed sets. There is a bijection 
$$\Hist^n(S,T)\To{\iso}\Hist^n_{strm}(S,T).$$

\end{proposition}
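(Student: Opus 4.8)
The plan is to construct the bijection explicitly in both directions, reading condition (2) of Definition \ref{def:historical} as a coherence condition for an inverse-limit presentation of streams, and using the fact that $S$ and $T$ are pointed to turn finite lists into streams by padding with the basepoint $*$.

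First I would define the forward map $f\mapsto\hat f$ carrying an $n$-historical propagator $f\in\Hist^n(S,T)$ to a stream propagator. Given $\sigma\in\Strm{S}$, its restrictions $\sigma\strst{t}$ form a compatible family under $\partial$, since $\partial(\sigma\strst{t})=\sigma\strst{t-1}$. By condition (1) each $f(\sigma\strst{t})$ is a list of length $t+n$, and by condition (2) these cohere: $\partial f(\sigma\strst{t})=f(\partial\sigma\strst{t})=f(\sigma\strst{t-1})$. Hence they glue to a single stream $\hat f(\sigma)\in\Strm{T}$ determined by $\hat f(\sigma)(i)=f(\sigma\strst{t})(i)$ for any $t$ with $t+n\geq i$, which is well defined precisely because of the coherence. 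I would then check $\hat f$ is $n$-historical: if $\sigma\strst{t}=\sigma'\strst{t}$ then $f(\sigma\strst{t})=f(\sigma'\strst{t})$ on the nose, and since $\hat f(\sigma)\strst{t+n}=f(\sigma\strst{t})$ we conclude $\hat f(\sigma)\strst{t+n}=\hat f(\sigma')\strst{t+n}$.

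For the backward map $g\mapsto\check g$ I would pad and truncate. A list $\ell$ of length $t$ extends to a stream $\bar\ell\in\Strm{S}$ with $\bar\ell(i)=\ell(i)$ for $i\leq t$ and $\bar\ell(i)=*$ for $i>t$; given $g\in\Hist_{strm}^n(S,T)$ set $\check g(\ell)=g(\bar\ell)\strst{t+n}$, a list of length $t+n$, so condition (1) holds. For condition (2), I would note that $\bar\ell$ and $\overline{\partial\ell}$ agree on $[1,t-1]$, i.e. $\bar\ell\strst{t-1}=\overline{\partial\ell}\strst{t-1}$; applying the stream-historicality of $g$ at parameter $t-1$ gives $g(\bar\ell)\strst{t+n-1}=g(\overline{\partial\ell})\strst{t+n-1}$, which is exactly the identity $\partial\check g(\ell)=\check g(\partial\ell)$.

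Finally I would verify that the two maps are mutually inverse. For $\widehat{\check g}=g$, since $\overline{\sigma\strst{t}}$ agrees with $\sigma$ on $[1,t]$, stream-historicality yields $g(\overline{\sigma\strst{t}})\strst{t+n}=g(\sigma)\strst{t+n}$, so $\widehat{\check g}(\sigma)$ and $g(\sigma)$ agree up to every length, hence everywhere. For $\check{\widehat f}=f$, evaluating $\widehat f(\bar\ell)$ with the index $s=|\ell|$ gives $\widehat f(\bar\ell)(i)=f(\bar\ell\strst{s})(i)=f(\ell)(i)$ for all $i\leq|\ell|+n$, so truncating to length $|\ell|+n$ recovers $f(\ell)$. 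The main obstacle — indeed the only genuinely subtle point — is verifying condition (2) for $\check g$: one must see that overwriting the final entry of a length-$t$ list by the basepoint cannot disturb $g$'s output before time $t+n$, and this is exactly where the $n$-step lag built into the definition of $\Hist_{strm}^n$ is used. Everything else is index bookkeeping, with condition (2) of Definition \ref{def:historical} serving as the coherence that makes the limit defining a stream behave correctly.
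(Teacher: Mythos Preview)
Your proof is correct and follows essentially the same approach as the paper: both construct the forward map by gluing the coherent family $f(\sigma\strst{t})$ into a stream and the backward map by padding a list with the basepoint and truncating. You supply more detail than the paper does (the paper simply asserts ``one checks directly'' that the two maps are mutually inverse), but the constructions and the verifications are the same.
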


\begin{proof}

We construct two functions $\alpha\taking\Hist^n(S,T)\to\Hist^n_{strm}(S,T)$ and $\beta\taking\Hist^n_{strm}(S,T)\to\Hist^n(S,T)$ that are mutually inverse.

Given an $n$-historical function $f\taking\List(S)\to\List(T)$ and a stream $\sigma\in\Strm{S}$, define the stream $\alpha(f)(\sigma)\taking\NN_{\geq 1}\to T$ to be the function whose $(t+n)$-restriction (for any $t\in\NN$) is given by
$$\alpha(f)(\sigma)\strst{t+n}=f(\sigma\strst{t}).$$
Because $f$ is historical, this construction is well defined.

Given an $n$-historical stream propagator $F\taking\Strm{S}\to\Strm{T}$ and a list $\ell\in\List(S)$ of length $|\ell|=t$, let $\ell_*\in\Strm{S}$ denote the stream $\NN_{\geq 1}\to S$ given on $i\in\NN_{\geq 1}$ by 
$$\ell_*(i)=
\begin{cases}
\ell(i)&\tn{ if }1\leq i\leq t\\
*&\tn{ if }i\geq t+1.
\end{cases}
$$
Now define the list $\beta(F)(\ell)\in\List(T)$ by 
$$\beta(F)(\ell)=F(\ell_*)\strst{t+n}.$$
One checks directly that for all $F\in\Hist_{strm}^n(S,T)$ we have $\alpha\circ\beta(F)=F$ and that for all $f\in\Hist^n(S,T)$ we have $\beta\circ\alpha(f)=f$.

\end{proof}

The above work shows that the notion of historical propagator is the same whether one considers it as acting on lists or on streams. Throughout the rest of this paper we work solely  with the list version. However, we sometimes say the word ``stream" (e.g. ``a propagator takes a stream of inputs and returns a stream of outputs") for the image it evokes.
 
 \subsection{The announced structure of the propagator algebra $\mcP$}\label{sec:algebra announcements}

In this section we will announce the structure of our $\mcW$-algebra of propagators, which we call $\mcP$. That is, we must specify
\begin{itemize}
 \item the set $\mcP(Y)$ of allowable ``fillers" for each black box $Y\in\Ob(\mcW)$,
 \item how a wiring diagram $\psi\taking Y_1,\ldots,Y_n\to Z$ and a filler for each $Y_i$ serves to produce a filler for $Z$.
\end{itemize}
In this section we will explain in words and then formally announce mathematical definitions.  In Section \ref{sec:operad requirements} we will prove that the announced structure has the required properties.

As mentioned above, the idea is that each black box is a placeholder for (i.e. can be filled with) those propagators which carry the specified local input streams to the specified local output streams. Each wiring diagram with propagators installed in each interior black box will constitute a new propagator for the exterior black box, which carries the specified global input streams to the specified global output streams. We now go into more detail and make these ideas precise.

\subsubsection{Black boxes are filled by historical propagators}

Let $Z=(\inp{Z},\outp{Z},\vset)$ be an object in $\mcW$. Recall that each element $w\in\inp{Z}$ is called an input wire, which carries a set $\vset(w)$ of possible values, and that element $w'\in\outp{Z}$ is called an output wire, which also carries a set $\vset(w')$ of possible values. This terminology is suggestive of a machine, which we call a {\em historical propagator} (or {\em propagator} for short), which takes a list of values on each input wire, processes it somehow, and emits a list of values on each output wire. The propagator's output at time $t_0$ can depend on the input it received for time $t<t_0$, but not on input that arrives later.

\begin{announcement}[$\mcP$ on objects]\label{ann:P on objects}

Let $Z=(\inp{Z},\outp{Z},\vset)$ be an object in $\mcW$. For any subset $I\ss\inp{Z}\amalg\outp{Z}$ we define 
$$\vset_I=\prod_{i\in I}\vset(i).$$
In particular, if $I=\emptyset$ then $\vset_I$ is a one-element set.

We define $\mcP(Z)\in\Ob(\Set)$ to be the set of {\em 1-historical propagators of type $Z$},
$$\mcP(Z):=\Hist^1(\vset_{\inp{Z}},\vset_{\outp{Z}}).$$

\end{announcement}

Consider the propagator below, which has one input wire and one output wire, say both carrying integers.
\begin{center}
\begin{tikzpicture}
\path(0,0);
\blackbox{(1.5,1.5)}{1}{1}{$``\Sigma"$}{.5};
\end{tikzpicture}
\end{center}
The name $``\Sigma"$ suggests that this propagator takes a list of integers and returns their running total. But for it to be 1-historical, its input up to time $t$ determines its output up to time $t+1$. Thus for example it might send an input list $\ell:=[1,3,5,7,10]$ of length $5$ to the output list $``\Sigma"(\ell)=[0,1,4,9,16,26]$ of length $6$.

\begin{remark}\label{rem:tensor2}

As in Remark \ref{rem:tensor} the following notation is convenient. Given a finite set $n\in\Ob(\Fin)$ and black boxes $Y_i\in\Ob(\mcW)$ for $i\in n$, we can form $Y=\bigotimes_{i\in n}Y_i$, with for example $\inp{Y}=\amalg_{i\in n}\inp{Y_i}$. Similarly, given a $1$-historical propagator $g_i\in\mcP(Y_i)$ for each $i\in n$ we can form a 1-historical propagator $g:=\bigotimes_{i\in n}g_i\in\Hist^1(\vset_{\inp{Y}},\vset_{\outp{Y}})$ simply by $g=\prod_{i\in n}g_i$.

\end{remark}

\subsubsection{Wiring diagrams shuttle value streams between propagators}

Let $Z\in\Ob(\mcW)$ be a black box, let $n\in\Ob(\Fin)$ be a finite set, and let $Y\taking n\to\Ob(\mcW)$ be an $n$-indexed set of black boxes. A morphism $\psi\taking Y\to Z$ in $\mcW$ is little more than a supplier assignment $s_\psi\taking\Dem{\psi}\to\Sup{\psi}$. In other words, it connects each demand wire to a supply wire carrying the same set of values. Therefore, if a propagator is installed in each black box $Y(i)$, then $\psi$ tells us how to take each value stream being produced by some propagator and feed it into the various propagators that it supplies.

\begin{announcement}[$\mcP$ on morphisms]\label{ann:P on morphisms}

Let $Z\in\Ob(\mcW)$ be a black box, let $n\in\Ob(\Fin)$ be a finite set, let $Y\taking n\to\Ob(\mcW)$ be an $n$-indexed set of black boxes, and let $\psi\taking Y\to Z$ be a morphism in $\mcW$. We must construct a function
$$\mcP(\psi)\taking\mcP(Y(1))\times\cdots\times\mcP(Y(n))\to\mcP(Z).$$
That is, given a historical propagator $g_i\in\Hist^1(\vset_{\inp{Y(i)}},\vset_{\outp{Y(i)}})$ for each $i\in n$, we need to produce a historical propagator $\mcP(\psi)(g_1,\ldots,g_n)\in\Hist^1(\vset_{\inp{Z}},\vset_{\outp{Z}}).$ Define $g\in\Hist^1(\vset_{\inp{Y}},\vset_{\outp{Y}})$ by $g:=\bigotimes_{i\in n}g_i$, as in Remark \ref{rem:tensor2}. Let $\inDem{\psi}=\inp{Y}\amalg\Del{\psi}$ and $\inSup{\psi}=\outp{Y}\amalg\Del{\psi},$ denote the set of internal demands of $\psi$ and the set of internal supplies of $\psi$, respectively.
 
We will define $\mcP(\psi)(g)$ by way of five helper functions:
\begin{align*}
S_\psi&\in\Hist^0(\vset_{\Sup{\psi}},\vset_{\Dem{\psi}}),\\
S'_\psi&\in\Hist^0(\vset_{\Sup{\psi}},\vset_{\inDem{\psi}}),\\
S''_\psi&\in\Hist^0(\vset_{\inSup{\psi}},\vset_{\outp{Z}}),\\
E_{\psi,g}&\in\Hist^1(\vset_{\inDem{\psi}},\vset_{\inSup{\psi}}),\\
C_{\psi,g}&\in\Hist^0(\vset_{\inp{Z}},\vset_{\Sup{\psi}}),
\end{align*}
where we will refer to the $S_\psi,S'_\psi,S''_\psi$ as ``shuttle", $E_{\psi,g}$ as ``evaluate", and $C_{\psi,g}$ as ``cascade".  We will abbreviate by $\vinp{Z}$ the set $\List(\vset_{\inp{Z}})$, and similarly for $\vSup{\psi},$   $\vinDem{\psi},$ etc.

By Announcement \ref{ann:morphisms in W}, a morphism $\psi\taking Y\to Z$ in $\mcW$ is given by a tuple $(\Del{\psi},\vset,s_\psi)$, where in particular we remind the reader of a commutative diagram
$$\xymatrix{\Dem{\psi}\ar[d]_{s_\psi}\ar[dr]^{\vset}\\\Sup{\psi}\ar[r]_{\vset}&\Set_*}$$
where we require $s_\psi(\outp{Z})\ss\inSup{\psi}$.  The function $s_\psi\taking\Dem{\psi}\to\Sup{\psi}$ induces the coordinate projection function $\pi_{s_\psi}\taking\vset_{\Sup{\psi}}\to\vset_{\Dem{\psi}}$ (see Section \ref{sec:notation}). Applying the functor $\List$ gives a $0$-historical function (see Lemma \ref{lemma:little facts on historicality}), $\List(\pi_{s_\psi})$ which we abbreviate as 
$$S_\psi\taking\vSup{\psi}\to\vDem{\psi}.$$
This is the function that shuttles a list of tuples from where they are supplied directly along a wire to where they are demanded. We define a commonly-used projection, 
$$S'_\psi:=\pi_{\vinDem{\psi}}\circ S_\psi\taking\vSup{\psi}\to\vinDem{\psi}.$$ 
The purpose of defining the set $\inDem{\psi}$ of internal demands above is that the supplier assignment sends $\outp{Z}$ into it, i.e. we have $s_\psi\big|_{\outp{Z}}\taking\outp{Z}\to\inSup{\psi}$ by the non-instantaneity requirement. It induces $\pi_{s_\psi\big|_{\outp{Z}}}\taking\vset_{\inSup{\psi}}\to\vset_{\outp{Z}}$. Applying $\List$ gives a 0-historical function $\List(\pi_{s_\psi\big|_{\outp{Z}}})$ which we abbreviate as
$$S''\taking\vinSup{\psi}\to\voutp{Z}.$$
Thus $S'$ and $S''$ first shuttle from supply lines to all demand lines, and then focus on only a subset of them. 
Let $\delta_\psi^1\in\Hist^1(\vset_{\Del{\psi}},\vset_{\Del{\psi}})$ be the 1-moment delay. Note that if $\Del{\psi}=\emptyset$ then $\delta_\psi^1\taking\{*\}\to\{*\}$ carries no information and can safely be ignored.

We now define the remaining helper functions:
\begin{align}\label{dia:helper functions}
E_{\psi,g}&:=(g\times\delta_\psi^1),\\\nonumber
C_{\psi,g}(\ell)&:=
\begin{cases}
[\;]&\tn{ if }|\ell|=0\\
(\ell, E_{\psi,g}\circ S'_\psi\circ C_{\psi,g}(\partial\ell))&\tn{ if }|\ell|\geq 1.
\end{cases}
\end{align}
The last is an inductive definition, which we can rewrite for $|\ell|\geq 1$ as 
$$C_{\psi,g}=\big(\id_{\vinp{Z}}\times(E_{\psi,g}\circ S'_\psi\circ C_{\psi,g}\circ\partial)\big)\circ\Delta,$$
where $\Delta:\vinp{Z}\to\vinp{Z}\times\vinp{Z}$ is the diagonal map.  Intuitively it says that a list of length $t$ on the input wires will produces a list of length $t$ on all supply wires. By Lemma \ref{lemma:little facts on historicality} $E_{\psi,g}$ is 1-historical and $C_{\psi,g}$ is $0$-historical.

We are ready to define the 1-historical function
\begin{align}\label{dia:formula for P on morphisms}
\mcP(\psi)(g)=S''_\psi\circ E_{\psi,g}\circ S'_\psi\circ C_{\psi,g}.
\end{align}

\end{announcement}

\begin{remark}
 The definitions of $S'_\psi$ and $E_{\psi,g}$ above implicitly make use of the ``zipwith" functions 
 \[\zipwith\taking\vinp{Z}\times_\NN\vinDem{\psi}\Too{\iso}\vDem{\psi}\quad\text{and}\quad\zipwith\taking\vinp{Y}\times_\NN\vDel{\psi}\Too{\iso}\vinDem{\psi},\]
 respectively.  In section \ref{sec:algebra requirements} we will make similar abuses in the calculations; however, when commutative diagrams are given, the zipwith is made ``explicit" by writing an equality between products of streams and streams of products when we mean that$\zipwith$should be applied to a product of streams. 
\end{remark}

\subsection{Running example to ground ideas and notation regarding $\mcP$}\label{sec:ground P}

In this section we compose elementary morphisms and apply them to a simple ``addition" propagator to construct a propagator that outputs the Fibonacci sequence. Let $X,Y, Z\in\Ob(\mcW)$ and $\phi\taking X\to Y$ and $\psi\taking Y\to Z$ be as in (\ref{dia:grounding objects}) and (\ref{dia:grounding morphisms}). Let $N=(\NN,1)\in\Set_*$ denote the set of natural numbers with basepoint 1. We recall the shapes of $X, Y$, and $Z$ here, but draw them with different labels:
\begin{center}
\begin{tikzpicture}
\path (0,0);
  \blackbox{(1.5,1.5)}{2}{1}{$\pls$}{.5}
  \draw (-.55,1) node{$a_X$};
  \draw (-.55,.5) node{$b_X$};
  \draw (2.05,.75) node{$c_X$};
\path (4,0);
  \blackbox{(1.5,1.5)}{1}{1}{$``1+\Sigma"$}{.5}
  \draw (6.05,.75) node{$c_Y$};
  \draw (3.55,.75) node{$a_Y$};
\path (8,0);
  \blackbox{(1.5,1.5)}{0}{1}{$``Fib"$}{.5}  
    \draw (10.05,.75) node{$c_Z$};
\end{tikzpicture}
\end{center}

We have replaced the symbol $X$ with the symbol $\pls$ because we are about to define an $X$-shaped propagator $\pls\in\mcP(X)$. Given an incoming list of numbers on wire $a_X$ and another incoming list of numbers on wire $b_X$, it will create a list of their sums and output that on $c_X$. More precisely, we take $\pls\taking\List(N\times N)\to\List(N)$ to be the 1-historical propagator defined as follows. Suppose given a list $\ell\in\List(N\times N)$ of length $t$, say
$$\ell=\big[\ell_{a}(1),\ell_{a}(2),\ldots,\ell_{a}(t),\big]\zipwith\big[\ell_{b}(1),\ell_{b}(2),\ldots,\ell_{b}(t)\big]$$
Define $\pls(\ell)\in\List(N)$ to be the list whose $n$th entry (for $1\leq n\leq t+1$) is
$$\pls(\ell)(n)=
\begin{cases}
1&\tn{ if }n=1\\
\ell_{a}(n-1)+\ell_{b}(n-1)&\tn{ if }2\leq n\leq t+1
\end{cases}
$$
So for example $``+"([4,5,6,7]\zipwith[1,1,3,7])=[1,5,6,9,14]$.

We will use only this $\pls$ propagator to build our Fibonacci sequence generator. To do so, we will use wiring diagrams $\phi$ and $\psi$, whose shapes we recall here from (\ref{dia:grounding morphisms}) above.
\begin{center}
\begin{tikzpicture}
\path (0,0);
  \blackbox{(4,3)}{1}{1}{$``1+\Sigma"$}{0.5}
  \draw (-.25,1.75) node{$a_Y$};
  \draw (4.25,1.75) node{$c_Y$};
\path (1.5,1);
  \blackbox{(1,1)}{2}{1}{$\pls$}{0.5}  
  \draw (1.3,1.85) node{$a_X$};
  \draw (1.3,1.15) node{$b_X$};
  \draw (2.75,1.75) node{$c_X$};
 \directarc{(0.25,1.5)}{(1.25,1.66)}
 \directarc{(2.75,1.5)}{(3.75,1.5)}
 \draw (3.00,1.5) .. controls (3.5,1.5) and (3.5,.5) .. (2,.5);
 \draw (1.25,1.33) .. controls (.75,1.33) and (.75,.5) .. (2,.5);
 \draw (2,3.5) node{$``1+\Sigma"=\mcP(\phi)(\pls)$};
\end{tikzpicture}
\hspace{.75in}
\begin{tikzpicture}
\path (0,0);
  \blackbox{(4,3)}{0}{1}{$``Fib"$}{0.5}
    \draw (4.25,1.75) node{$c_Z$};
\path (0.75,0.75);
  \blackbox{(2,1.5)}{1}{1}{$``1+\Sigma"$}{0.5}  
  \draw (.5,1.75) node{$a_Y$};
  \draw (3.0,1.75) node{$c_Y$};
  \draw (3.3,1.2) node{$d_\psi$};
 \directarc{(3,1.5)}{(3.75,1.5)}
 \filldraw[black] (3.25,1.5) circle (2pt);
 \draw (3.25,1.5) .. controls (4.25,1.5) and (4.25,.325) .. (1.5,.325);
 \draw (.5,1.5) .. controls (0,1.5) and (0,.325) .. (1.5,.325);
 \draw (2,3.5) node{$``Fib"=\mcP(\psi)(``1+\Sigma")$};
\end{tikzpicture}
\end{center}
The $Y$-shaped propagator $``1+\Sigma"=\mcP(\phi)(\pls)\in\mcP(Y)$ will have the following behavior: given an incoming list of numbers on wire $a_Y$, it will return a list of their running totals, plus 1. More precisely $``1+\Sigma"\taking\List(N)\to\List(N)$ is the 1-historical propagator defined as follows. Suppose given a list $\ell\in\List(N)$ of length $t$, say $\ell=[\ell_1,\ell_2,\ldots,\ell_t]$. Then $``1+\Sigma"(\ell)$ will be the list whose $n$th entry (for $1\leq n\leq t+1$) is 
\begin{align}\label{dia:1+sigma}
``1+\Sigma"(\ell)(n)=1+\sum_{i=1}^{n-1}\ell_i.
\end{align}
But this is not by fiat---it is calculated using the formula given in Announcement \ref{ann:P on morphisms}. We begin with the following table.\label{delme}
\begin{align*}\small
 \begin{array}{| l || l | l | l |}
 \bhline
 \multicolumn{4}{|c|}{\tn{Calculating }``1+\Sigma"}\\\bhline
 &&&\\%
 {\ell\in\ol{a_Y}}&C_{\phi,\pls}(\ell)\in\ol{\{a_Y,c_X\}}&S'_\phi C_{\phi,\pls}(\ell)\in\ol{\{a_X,b_X\}}&E_{\phi,\pls}S'_\phi C_{\phi,\pls}(\ell)\in\ol{c_X}\\\bbhline
 [\;]&[\;]&[\;]&[1]\\\hline
 [\ell_1]&[\ell_1]\tbzipwith[1]&[\ell_1]\tbzipwith[1]&[1,1+\ell_1]\\\hline
 [\ell_1,\ell_2]&[\ell_1,\ell_2]\tbzipwith[1,1+\ell_1]&[\ell_1,\ell_2]\tbzipwith[1,1+\ell_1]&[1,1+\ell_1,1+\ell_1+\ell_2]\\\hline
 [\ell_1,\ell_2,\ell_3]&[\ldots,\ell_3]\tbzipwith[\ldots,1+\ell_1+\ell_2]&[\ldots,\ell_3]\tbzipwith[\ldots,1+\ell_1+\ell_2]&[\ldots,1+\ell_1+\ell_2+\ell_3]\\\hline
 [\ell_1,\ldots,\ell_t]&[\ldots,\ell_t]\tbzipwith[\ldots,1+\sum_{i=1}^{t-1}\ell_i]&[\ldots,\ell_t]\tbzipwith[\ldots,1+\sum_{i=1}^{t-1}\ell_i]&[\ldots,1+\sum_{i=1}^t\ell_i]\\\bhline
 \end{array}
 \end{align*}
 where the last row can be established by induction. The ellipses ($\ldots$) in the later boxes indicate that the beginning part of the sequence is repeated from the row above, which is a consequence of the fact that the formulas in Announcement \ref{ann:P on morphisms} are historical. We need only calculate 
\begin{align*}
``1+\Sigma"(\ell)=\mcP(\phi)(\pls)(\ell)&=S''_\phi\circ E_{\phi,\pls}\circ S'_\phi\circ C_{\phi,\pls}(\ell)\\
&=\left[1,1+\ell_1,1+\ell_1+\ell_2,\ldots,1+\sum_{i=1}^t\ell_i\right],
\end{align*}
just as in (\ref{dia:1+sigma}).

The $Z$-shaped propagator $``Fib"=\mcP(\psi)(``1+\Sigma")\in\mcP(Z)$ will have the following behavior: with no inputs, it will output the Fibonacci sequence 
$$``Fib"()=[1,1,2,3,5,8,13\ldots].$$
Again, this is calculated using the formula given in Announcement \ref{ann:P on morphisms}. We note first that since $\inp{Z}=\emptyset$ we have $\vset_{\inp{Z}}=\{*\}$, so $\ol{\inp{Z}}=\List(\vset_{\inp{Z}})=\List(\{*\})$.

As above we provide a table that shows the calculation given the formula in Announcement \ref{ann:P on morphisms}. 
\begin{align*}
 \begin{array}{| l || l | l | l |}
 \bhline
 \multicolumn{4}{|c|}{\tn{Calculating }``Fib"}\\\bhline
 {}&C_{\psi,``1+\Sigma"}(\ell)&S'_\psi C_{\psi,``1+\Sigma"}(\ell)&E_{\psi,``1+\Sigma"}S'_\psi C_{\psi,``1+\Sigma"}(\ell)\\
 \ell\in\ol{\emptyset}&\hspace{.4in}\in\ol{\{c_Y,d_\psi\}}&\hspace{.55in}\in\ol{\{a_Y,d_\psi\}}&\hspace{1.1in}\in\ol{\{c_Y,d_\psi\}}\\\bbhline
 [\;]&[\;]&[\;]&[1]\tbzipwith[1]\\\hline
 [*]&[1]\tbzipwith[1]&[1]\tbzipwith[1]&[1,2]\tbzipwith[1,1]\\\hline
 [*,*]&[1,2]\tbzipwith[1,1]&[1,1]\tbzipwith[1,2]&[1,2,3]\tbzipwith[1,1,2]\\\hline
 [*,*,*]&[1,2,3]\tbzipwith[1,1,2]&[1,1,2]\tbzipwith[1,2,3]&[1,2,3,5]\tbzipwith[1,1,2,3]\\\hline
 [*,*,*,*]&[1,2,3,5]\tbzipwith[1,1,2,3]&[1,1,2,3]\tbzipwith[1,2,3,5]&[1,2,3,5,8]\tbzipwith[1,1,2,3,5]\\\bhline
 \end{array}
 \end{align*}
In the case of a list $\ell\in\List(\{*\})$ of length $t$, we have 
$$``Fib"(n)=\mcP(\psi)(``1+\Sigma")(\ell)=\left[1,1,2,3,\ldots,1+\sum_{i=1}^{t-2}``Fib"(i)\right].$$
Thus we have achieved our goal. Note that, while unknown to the authors, the fact that $``Fib"(t)=1+\sum_{i=1}^{t-2}``Fib"(i)$ was known at least as far back as 1891, \cite{Luc}. For us it appeared not by any investigation, but merely by cordoning off part of our original wiring diagram for $``Fib"$,
\begin{center}
 \begin{tikzpicture}
  \path (0,0);
  \blackbox{(4,3)}{0}{1}{$``Fib"$}{0.5}
  \path (0.75,0.75);
  \dashblackbox{(2,1.5)}{1}{1}{}{0.5}  
  \directarc{(3,1.5)}{(3.75,1.5)}
  \filldraw[black] (3.25,1.5) circle (2pt);
  \draw (3.25,1.5) .. controls (4.25,1.5) and (4.25,.325) .. (1.5,.325);
  \draw (.5,1.5) .. controls (0,1.5) and (0,.325) .. (1.5,.325);
  \path (1.5,1.25);
  \blackbox{(0.5,0.5)}{2}{1}{+}{0.25}  
  \directarc{(1,1.5)}{(1.375,1.5833)}
  \directarc{(2.125,1.5)}{(2.5,1.5)}
  \draw (2.25,1.5) .. controls (2.5,1.5) and (2.5,1) .. (1.75,1);
  \draw (1.375,1.42) .. controls (1.125,1.42) and (1.125,1) .. (1.75,1);
\end{tikzpicture}
\end{center}

Above in (\ref{dia:compute composition example}) we computed the supplier assignment for the composition WD, $\omega:=\psi\circ\phi\taking X\to Z$. In case the above tables were unclear, we make one more attempt at explaining how propagators work by showing a sequence of images with values traversing the wires of $\omega$ applied to $\pls$. The wires all start with the basepoint on their supply sides, at which point it is shuttled to the demand sides. It is then processed, again giving values on the supply sides that are again shuttled to the demand sides. This is repeated once more.

\begin{center}
\begin{tikzpicture}
  \path (0,0);
  \blackbox{(4,3)}{0}{1}{$``Fib"$--Supply (iter. 1)}{0.5}
  \directarc{(2.875,1.5)}{(3.75,1.5)}
  \draw [->] (2.125,1.5) -- (2.875,1.5);
  \filldraw[black] (3,1.5) circle (2pt);
  \draw (3.25,1.5) .. controls (4.25,1.5) and (4.25,.325) .. (1.5,.325);
  \draw (1.375,1.5833) .. controls (0,1.5833) and (0,.325) .. (1.5,.325);
  \path (1.5,1.25);
  \blackbox{(0.5,0.5)}{2}{1}{\tiny$+$}{0.25}  
  \draw (2.25,1.5) .. controls (2.5,1.5) and (2.5,1) .. (1.75,1);
  \draw (1.375,1.42) .. controls (1.125,1.42) and (1.125,1) .. (1.75,1);
  \draw (2.2,1.65) node{\tiny$1$};
  \draw (3.2,1.65) node{\tiny$1$};
\end{tikzpicture}
\hspace{.2in}
\begin{tikzpicture}
  \path (0,0);
  \blackbox{(4,3)}{0}{1}{$``Fib"$--Demand (iter. 1)}{0.5}
  \directarc{(2.875,1.5)}{(3.75,1.5)}
  \draw [->] (2.125,1.5) -- (2.875,1.5);
  \filldraw[black] (3,1.5) circle (2pt);
  \draw (3.25,1.5) .. controls (4.25,1.5) and (4.25,.325) .. (1.5,.325);
  \draw (1.375,1.5833) .. controls (0,1.5833) and (0,.325) .. (1.5,.325);
  \path (1.5,1.25);
  \blackbox{(0.5,0.5)}{2}{1}{\tiny$+$}{0.25}  
  \draw (2.25,1.5) .. controls (2.5,1.5) and (2.5,1) .. (1.75,1);
  \draw (1.375,1.42) .. controls (1.125,1.42) and (1.125,1) .. (1.75,1);
  \draw (1.4,1.75) node{\tiny$1$};
  \draw (1.4,1.25) node{\tiny$1$};
  \draw (2.85,1.7) node{\tiny$1$};
  \draw (3.85,1.7) node{\tiny$1$};
\end{tikzpicture}
\hspace{.2in}
\begin{tikzpicture}
  \path (0,0);
  \blackbox{(4,3)}{0}{1}{$``Fib"$--Supply (iter. 2)}{0.5}
  \directarc{(2.875,1.5)}{(3.75,1.5)}
  \draw [->] (2.125,1.5) -- (2.875,1.5);
  \filldraw[black] (3,1.5) circle (2pt);
  \draw (3.25,1.5) .. controls (4.25,1.5) and (4.25,.325) .. (1.5,.325);
  \draw (1.375,1.5833) .. controls (0,1.5833) and (0,.325) .. (1.5,.325);
  \path (1.5,1.25);
  \blackbox{(0.5,0.5)}{2}{1}{\tiny$+$}{0.25}  
  \draw (2.25,1.5) .. controls (2.5,1.5) and (2.5,1) .. (1.75,1);
  \draw (1.375,1.42) .. controls (1.125,1.42) and (1.125,1) .. (1.75,1);
  \draw (2.2,1.65) node{\tiny$2$};
  \draw (3.2,1.65) node{\tiny$1$};
\end{tikzpicture}
\\
\vspace{.2in}
\begin{tikzpicture}
  \path (0,0);
  \blackbox{(4,3)}{0}{1}{$``Fib"$--Demand (iter. 2)}{0.5}
  \directarc{(2.875,1.5)}{(3.75,1.5)}
  \draw [->] (2.125,1.5) -- (2.875,1.5);
  \filldraw[black] (3,1.5) circle (2pt);
  \draw (3.25,1.5) .. controls (4.25,1.5) and (4.25,.325) .. (1.5,.325);
  \draw (1.375,1.5833) .. controls (0,1.5833) and (0,.325) .. (1.5,.325);
  \path (1.5,1.25);
  \blackbox{(0.5,0.5)}{2}{1}{\tiny$+$}{0.25}  
  \draw (2.25,1.5) .. controls (2.5,1.5) and (2.5,1) .. (1.75,1);
  \draw (1.375,1.42) .. controls (1.125,1.42) and (1.125,1) .. (1.75,1);
  \draw (1.4,1.75) node{\tiny$1$};
  \draw (1.4,1.25) node{\tiny$2$};
  \draw (2.85,1.7) node{\tiny$2$};
  \draw (3.85,1.7) node{\tiny$1$};
\end{tikzpicture}
\hspace{.2in}
\begin{tikzpicture}
  \path (0,0);
  \blackbox{(4,3)}{0}{1}{$``Fib"$--Supply (iter. 3)}{0.5}
  \directarc{(2.875,1.5)}{(3.75,1.5)}
  \draw [->] (2.125,1.5) -- (2.875,1.5);
  \filldraw[black] (3,1.5) circle (2pt);
  \draw (3.25,1.5) .. controls (4.25,1.5) and (4.25,.325) .. (1.5,.325);
  \draw (1.375,1.5833) .. controls (0,1.5833) and (0,.325) .. (1.5,.325);
  \path (1.5,1.25);
  \blackbox{(0.5,0.5)}{2}{1}{\tiny$+$}{0.25}  
  \draw (2.25,1.5) .. controls (2.5,1.5) and (2.5,1) .. (1.75,1);
  \draw (1.375,1.42) .. controls (1.125,1.42) and (1.125,1) .. (1.75,1);
  \draw (2.2,1.65) node{\tiny$3$};
  \draw (3.2,1.65) node{\tiny$2$};
\end{tikzpicture}
\hspace{.2in}
\begin{tikzpicture}
  \path (0,0);
  \blackbox{(4,3)}{0}{1}{$``Fib"$--Demand (iter. 3)}{0.5}
  \directarc{(2.875,1.5)}{(3.75,1.5)}
  \draw [->] (2.125,1.5) -- (2.875,1.5);
  \filldraw[black] (3,1.5) circle (2pt);
  \draw (3.25,1.5) .. controls (4.25,1.5) and (4.25,.325) .. (1.5,.325);
  \draw (1.375,1.5833) .. controls (0,1.5833) and (0,.325) .. (1.5,.325);
  \path (1.5,1.25);
  \blackbox{(0.5,0.5)}{2}{1}{\tiny$+$}{0.25}  
  \draw (2.25,1.5) .. controls (2.5,1.5) and (2.5,1) .. (1.75,1);
  \draw (1.375,1.42) .. controls (1.125,1.42) and (1.125,1) .. (1.75,1);
  \draw (1.4,1.75) node{\tiny$2$};
  \draw (1.4,1.25) node{\tiny$3$};
  \draw (2.85,1.7) node{\tiny$3$};
  \draw (3.85,1.7) node{\tiny$2$};
\end{tikzpicture}
\end{center}
One sees the first three elements of the Fibonacci sequence $[1,1,2]$, as demanded, emerging from the output wire.

\subsection{Proof that the algebra requirements are satisfied by $\mcP$}\label{sec:algebra requirements}

Below we prove that $\mcP$, as announced, satisfies the requirements necessary for it to be a $\mcW$-algebra. Unfortunately, the proof is quite technical and not very enlightening. Given a composition $\omega=\psi\circ\phi$, there is a correspondence between the wires in $\omega$ with the wires in $\psi$ and $\phi$, as laid out in Announcement \ref{ann:composition in W}. The following proof essentially amounts to checking that, under this correspondence, the way Announcement \ref{ann:P on morphisms} instructs us to shuttle information along the wires of $\omega$ is in agreement with the way it instructs us to shuttle information along the wires of $\psi$ and $\phi$.

\begin{theorem}\label{th:main}
The function $\mcP\taking\Ob(\mcW)\to\Ob(\Sets)$ defined in Announcement \ref{ann:P on objects} and the function $\mcP\taking\mcW(Y;Z)\to\Hom_\Sets(\mcP(Y);\mcP(Z))$ given in Announcement \ref{ann:P on morphisms} satisfy the requirements for $\mcP$ to be a $\mcW$-algebra.
\end{theorem}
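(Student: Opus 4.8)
The requirements to verify are the two displayed in the definition of an $\mcO$-algebra: the \emph{identity law} $\mcP(\id_Z)=\id_{\mcP(Z)}$ for every $Z\in\Ob(\mcW)$, and the \emph{composition law}, i.e. commutativity of the diagram (\ref{dia:operad functor on composition}). For the identity law, recall from Announcement \ref{ann:identity in W} that $\Del{\id_Z}=\emptyset$ and $s_{\id_Z}$ is the identity, so $\inDem{\id_Z}=\inp{Z}$, $\inSup{\id_Z}=\outp{Z}$, and the $1$-moment delay carries no information. Tracing the helper functions of Announcement \ref{ann:P on morphisms}, the shuttle $S'_{\id_Z}$ projects $\vSup{\id_Z}$ onto its $\vinp{Z}$ summand, the evaluate function $E_{\id_Z,g}$ reduces to $g$ itself, and $S''_{\id_Z}$ is the identification $\vinSup{\id_Z}\iso\voutp{Z}$. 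The cascade $C_{\id_Z,g}$ records the input list on the wires $\inp{Z}$, which $S'_{\id_Z}$ reads off; substituting into (\ref{dia:formula for P on morphisms}) collapses the composite to $\mcP(\id_Z)(g)=g$, as required.

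For the composition law I would first separate the ``tensor'' part from the genuinely binary part. By Remarks \ref{rem:tensor} and \ref{rem:tensor2}, forming $\phi=\bigotimes_i\phi_i$ corresponds under $\mcP$ to the product $\prod_i\mcP(\phi_i)$, since every set of delays, supplies, and demands of $\phi$ is a disjoint union over $i\in n$ and every helper function is the matching product. Hence commutativity of (\ref{dia:operad functor on composition}) reduces to the single binary statement $\mcP(\psi\circ\phi)=\mcP(\psi)\circ\mcP(\phi)$ for composable $X\To{\phi}Y\To{\psi}Z$.

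The heart of the proof is this binary identity. Writing $\omega=\psi\circ\phi$ and $g=\mcP(\phi)(f)$ for $f\in\mcP(X)$, I would expand both $\mcP(\omega)(f)=S''_\omega\circ E_{\omega,f}\circ S'_\omega\circ C_{\omega,f}$ and $\mcP(\psi)(g)=S''_\psi\circ E_{\psi,g}\circ S'_\psi\circ C_{\psi,g}$, and invoke the pushout of Lemma \ref{lemma:pushout lemma}, which identifies $\Sup{\omega}\iso\inp{Z}\amalg\outp{X}\amalg\Del{\phi}\amalg\Del{\psi}$ and expresses the supplier assignment (\ref{dia:composition in W}) through the maps $h$ and $f$ appearing in (\ref{dia:pushout morphisms}). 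The plan is to translate each helper for $\omega$ through this identification and show it factors as the helper for $\psi$ ``wrapped around'' the helpers for $\phi$: shuttling a value-list across $\omega$ equals first shuttling into $\phi$ along $s_\phi$, evaluating, and then shuttling out into $\psi$ along $s_\psi$, which is exactly what the pushout maps encode. Since $\Del{\omega}=\Del{\phi}\amalg\Del{\psi}$, the evaluate function $E_{\omega,f}$ (the product of $f$ with the $1$-moment delay on $\Del{\omega}$) reorganizes into the nested evaluation $E_{\psi,g}$ with $g$ built from $E_{\phi,f}$ inside. I would then prove the resulting cascade identity by induction on the length $t=|\ell|$ of $\ell\in\vinp{Z}$: the base case $t=0$ is immediate since every cascade returns the empty list, and for the inductive step the recursive clause of $C$ in (\ref{dia:helper functions}) expresses the length-$t$ value through $E\circ S'\circ C$ applied to $\partial\ell$, so historicality (Lemma \ref{lemma:little facts on historicality}) and the inductive hypothesis force the supply-wire values of $\omega$ at each time to agree with those produced by running $\phi$ and $\psi$ in the nested manner.

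The main obstacle is exactly this bookkeeping of feedback across the erased $Y$-box: one must check that merging the delays $\Del{\phi}$ and $\Del{\psi}$ into a single recursion for $C_{\omega,f}$ yields the same time-indexed values as the two-stage recursion (an inner $C_{\phi,f}$ determining $g$, an outer $C_{\psi,g}$), and that the pushout identification of $\Sup{\omega}$ intertwines $S'_\omega$ and $S''_\omega$ with the corresponding shuttles of $\phi$ and $\psi$. The non-instantaneity requirement (\ref{dia:non-instantaneity requirement}) is what makes the induction well-founded: because no output is supplied instantaneously by an input, every feedback path passes through a delay node, so the value on any supply wire at time $t$ depends only on values at times $<t$ and the two unrollings of the recursion provably coincide. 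Once the wire correspondence is pinned down via Lemma \ref{lemma:pushout lemma}, each verification is a diagram chase together with the length induction, requiring no idea beyond historicality and non-instantaneity.
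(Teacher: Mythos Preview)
Your approach matches the paper's: both handle the identity law by unwinding the helpers (the paper computes $C_{\id_Z,f}(\ell)=(\ell,f(\partial\ell))$ by a short induction, then reads off the result), and both reduce the composition law to the binary case and establish by induction on $|\ell|$ the key equality
\[
S'_\omega\circ C_{\omega,f}=(S'_\phi\times\id)\circ(C_{\phi,f}\times\id)\circ S'_\psi\circ C_{\psi,g},
\]
assembling it with the easier helper identities $E_{\omega,f}=E_{\phi,f}\times\delta^1_\psi$ and $S''_\omega=S''_\psi\circ(S''_\phi\times\id)$ into one commutative diagram.

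One correction: your claim that ``every feedback path passes through a delay node'' is wrong---the morphism $\phi$ in Section~\ref{sec:ground W} has a feedback loop through the box $X$ with no delay node at all. The recursion for $C$ is well-founded simply because $E$ is $1$-historical (Lemma~\ref{lemma:little facts on historicality}), so $C(\ell)$ depends only on $C(\partial\ell)$; non-instantaneity plays no role there. Where non-instantaneity is actually used is in the identity $S''_\omega=S''_\psi\circ(S''_\phi\times\id)$, since it forces $s_\psi(\outp{Z})\subseteq\outp{Y}\amalg\Del{\psi}$ and then $s_\phi(\outp{Y})\subseteq\outp{X}\amalg\Del{\phi}$, matching the formula (\ref{dia:composition in W}) for $s_\omega|_{\outp{Z}}$. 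Also be aware that the inductive step is heavier than your sketch suggests: the paper needs a three-way case split on each $j\in\inDem{\omega}$ (according to whether $j\in\Del{\psi}$, or $j\in\inDem{\phi}$ with $s_\phi(j)\in\inp{Y}$, or $s_\phi(j)\in\inSup{\phi}$), together with two preparatory lemmas relating $C_{\omega,f}$ separately to $C_{\psi,g}$ and to $C_{\phi,f}$ before the main induction can close.
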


\begin{proof}

We must show that both the identity law and the composition law hold. This will require several technical lemmas, which for the sake of flow we have included within the current proof.

We begin with the identity law. Let $Z=(\inp{Z},\outp{Z},\vset_Z)$ be an object. The supplier assignment for $\id_Z\taking Z\to Z$ is given by the identity function
$$s_{\id_Z}\taking\outp{Z}\amalg\inp{Z}\Too{\id}\inp{Z}\amalg\outp{Z}.$$
Let $f\in\mcP(Z)=\Hist^1(\vset_{\inp{Z}},\vset_{\outp{Z}})$ be a historical propagator. We need to show that $\mcP(\id_Z)(f)=f$. 

Recall the maps 
\begin{tabbing}
\hsp\=$S_{\id_Z}\taking\vSup{\id_Z}\to\vDem{\id_Z},$
\hsp\=$S'_{\id_Z}\taking\vSup{\id_Z}\to\vinDem{\id_Z},$
\hsp\=$S''_{\id_Z}\taking\vinSup{\id_Z}\to\voutp{Z},$\\
\>$E_{\id_Z,f}\taking\vinp{Z}\to\vinSup{\id_Z},$
\>$C_{\id_Z,f}\taking\vinp{Z}\to\vSup{\id_Z},$
\end{tabbing}
from Announcement \ref{ann:P on morphisms}, where $\inSup{\id_Z}=\outp{Z}$.  
\begin{lemma}
 Suppose given a list $\ell\in\vinp{Z}$. We have 
 $$C_{\id_Z,f}(\ell)=
 \begin{cases}
 [\;]&\tn{ if }|\ell|=0,\\
 \big(\ell,f(\partial\ell)\big)&\tn{ if }|\ell|\geq 1.
 \end{cases}
 $$
\end{lemma}
\begin{proof}
 We work by induction. The result holds trivially for the empty list. Thus we may assume that the result holds for $\partial\ell$ (i.e. that $C_{\id_Z,f}(\partial\ell)=(\partial\ell,f(\partial\partial\ell)$ holds) and deduce that it holds for $\ell$. Note that $S'_{\id_Z}([\;])=[\;]$ and $E_{\id_Z,f}([\;])=f([\;])$. By the formulas (\ref{dia:helper functions}) we have  
 \begin{align*}
  C_{\id_Z,f}(\ell)
  &=\big(\id_{\vinp{Z}}\times(E_{\id_Z,f}\circ S'_{\id_Z}\circ C_{\id_Z,f}\circ\partial)\big)\circ\Delta(\ell)\\
  &=\big(\id_{\vinp{Z}}\times(E_{\id_Z,f}\circ S'_{\id_Z}\circ C_{\id_Z,f}\circ\partial)\big)(\ell,\ell)\\
  &=\big(\id_{\vinp{Z}}(\ell),E_{\id_Z,f}\circ S'_{\id_Z}\circ C_{\id_Z,f}\circ\partial(\ell)\big)\\
  &=\big(\ell,E_{\id_Z,f}\circ S'_{\id_Z}(\partial\ell,f(\partial\partial\ell))\big)\\
  &=\big(\ell,E_{\id_Z,f}\circ \pi_{\vinDem{\id_Z}}\circ S_{\id_Z}(\partial\ell,f(\partial\partial\ell))\big)\\
  &=\big(\ell,E_{\id_Z,f}\circ \pi_{\vinDem{\id_Z}}(\partial\ell,f(\partial\partial\ell))\big)\\
  &=\big(\ell,E_{\id_Z,f}(\partial\ell)\big)\\
  &=\big(\ell,f(\partial\ell)\big).\\
 \end{align*}
\end{proof}
Expanding the definition of $\mcP(\id_Z)(f)(\ell)$ we now complete the proof that the identity law holds for $\mcP$:
\begin{align*}
 \mcP(\id_Z)(f)(\ell)
 &=S''_{\id_Z}\circ E_{\id_Z,f}\circ S'_{\id_Z}\circ C_{\id_Z,f}(\ell)\\
 &=S''_{\id_Z}\circ E_{\id_Z,f}\circ S'_{\id_Z}(\ell,f(\partial\ell))\\
 &=S''_{\id_Z}\circ E_{\id_Z,f}\circ \pi_{\vinDem{\id_Z}}\circ S_{\id_Z}(\ell,f(\partial\ell))\\
 &=S''_{\id_Z}\circ E_{\id_Z,f}\circ \pi_{\vinDem{\id_Z}}(\ell,f(\partial\ell))\\
 &=S''_{\id_Z}\circ E_{\id_Z,f}(\ell)\\
 &=S''_{\id_Z}\big(f(\ell)\big)\\
 &=f(\ell).\\
\end{align*}

We now move on to the composition law. Let $s\taking m\to n$ be a morphism in $\Fin$. Let $Z\in\Ob(\mcW)$ be a black box, let $Y\taking n\to\Ob(\mcW)$ be an $n$-indexed set of black boxes, and let $x\taking m\to\Ob(\mcW)$ be an $m$-indexed set of black boxes. We must show that the following diagram of sets commutes:
$$
\xymatrix{
\mcW_n(Y;Z)\times\prod_{i\in n}\mcW_{m_i}(X_i;Y(i))\ar[d]_{\mcP}\ar[r]^{\circ_\mcW}&\mcW_m(X;Z)\ar[d]^\mcP\\
\Sets_n(\mcP(Y);\mcP(Z))\times\prod_{i\in n}\Sets_{m_i}(\mcP(X_i);\mcP(Y(i)))\ar[r]_-{\circ_\Sets}&\Sets_m(\mcP(X);\mcP(Z))
}
$$
Suppose given $\psi\taking Y\to Z$ and $\phi_i\taking X_i\to Y(i)$ for each $i$, and let $\phi=\bigotimes_i\phi_i\taking X\to Y$. We can trace through the diagram to obtain $\mcP(\psi)\circ_\Sets\mcP(\phi)$ and $\mcP(\psi\circ_\mcW\phi)$, both in $\Sets_m(\mcP(X);\mcP(Z)))$ and we want to show they are equal as functions. From here on, we drop the subscripts on $\circ_-$, i.e. we want to show $\mcP(\psi)\circ\mcP(\phi)=\mcP(\psi\circ\phi).$

Let $\omega=\psi\circ\phi$.  An element $f\in\mcP(X)=\Hist^1(\vset_{\inp{X}},\vset_{\outp{X}})$ is a 1-historical propagator, $f\taking\vinp{X}\to\voutp{X}$. We are required to show that the following equation holds in $\mcP(Z)$:
\begin{equation}\label{eq:main}
 \mcP(\psi)\circ\mcP(\phi)(f)\qeq\mcP(\omega)(f).
\end{equation}

Expanding using the definition \eqref{dia:formula for P on morphisms} of $\mcP(\psi)\circ\mcP(\phi)(f)$ and $\mcP(\omega)(f)$ we see that this translates into proving the commutativity of the following diagram:
$$\xymatrix{
\vinSup{\psi}\ar@{}[ddrrrr]|?\ar[rr]^{S''_\psi}&&\voutp{Z}&&\vinSup{\omega}\ar[ll]_{S''_\omega}
\\%
\vinDem{\psi}\ar[u]_{E_{\psi,g}}&&&&\vinDem{\omega}\ar[u]^{E_{\omega,f}}
\\%
\vSup{\psi}\ar[u]_{S'_\psi}&&\vinp{Z}\ar[ll]_{C_{\psi,g}}\ar[rr]^{C_{\omega,f}}&&\vSup{\omega}\ar[u]^{S'_\omega}
}
$$
where we abbreviated $g=\mcP(\phi)(f)$. To do so, we must prove some technical results (Lemmas \ref{lem:production lemma 1}, \ref{lem:production lemma 2}, and \ref{lem:induction step}) which assert the equality of various demand and supply streams flowing on the composed wiring diagram $\omega=\psi\circ\phi$. 

The ultimate proof of \eqref{eq:main} will be inductive in nature. That is, to prove that the result holds for a nonempty list $\ell$ of length $t\ge1$, we will assume that it holds for the list $\partial\ell$ of length $t-1$. More precisely, to prove \eqref{eq:main} we will need to know the following equality of functions $\vinp{Z}\to\vinDem{\omega}$
\begin{align}
S'_\omega\circ C_{\omega,f}=(S'_\phi\times\id)\circ (C_{\phi,f}\times\id)\circ S'_\psi\circ C_{\psi,g}
\end{align}
and this is proven by induction on the length of $\ell\in\vinp{Z}$.  The base of the induction is clear after recalling that definition \eqref{dia:helper functions} gives $C_{\omega,f}([\;])=[\;]$, $C_{\phi,f}([\;])=[\;]$ and $C_{\psi,g}([\;])=[\;]$, and that $S'_\psi$ and $s'_\omega$ are 0-historical.

The next three lemmas carry out the induction step and assume the following induction hypothesis regarding the equality of functions $\vinp{Z}\to\vinDem{\omega}$
\begin{align}\label{dia:induction hypothesis}
S'_\omega\circ C_{\omega,f}\circ\partial=(S'_\phi\times\id)\circ (C_{\phi,f}\times\id)\circ S'_\psi\circ C_{\psi,g}\circ\partial.
\end{align}

\begin{lemma}\label{lem:production lemma 1}
If we assume that equation (\ref{dia:induction hypothesis}) holds then the following diagram commutes:
$$\xymatrix{
\vinp{Z}\ar[rr]^{C_{\omega,f}}\ar[dd]_{C_{\psi,\mcP(\phi)(f)}}&&\vSup{\omega}\ar@{=}[d]
\\%
&&\vinp{Z}\times\vinSup{\phi}\times\vDel{\psi}\ar[d]^{\id\times S''_\phi\times\id}
\\%
\vSup{\psi}&&\vinp{Z}\times\voutp{Y}\times\vDel{\psi}\ar@{=}[ll]
}
$$
in other words, we have the following equality between functions $\vinp{Z}\to\vSup{\psi}$:
\begin{equation}\label{eq:cascade equality}
 C_{\psi,\mcP(\phi)(f)}=(\id\times S''_\phi\times\id)\circ C_{\omega,f}.
\end{equation}
\end{lemma}

\begin{proof}

For convenience we will abbreviate $g=\mcP(\phi)(f)$.  It follows from our induction hypothesis (\ref{dia:induction hypothesis}), the internal square in the following diagram (when composed with $(\id\times\partial)\circ\Delta\taking\vinp{Z}\to\vinp{Z}\times\vinp{Z}$) commutes:
$$
\scriptsize\xymatrix@=13pt{
\vinp{Z}\ar[dr]^{(\id\times\partial)\circ\Delta}\ar[rrrr]^{C_{\omega,f}}\ar[dddd]_{C_{\psi,g}}&&&&\vSup{\omega}\ar@{=}[d]
\\
&\vinp{Z}\times\vinp{Z}\ar[r]^{\id\times C_{\omega,f}}\ar[d]_{\id\times C_{\psi,g}}&\vinp{Z}\times\vSup{\omega}\ar[r]^{\id\times S'_\omega}&\vinp{Z}\times\vinDem{\omega}\ar@{=}[d]\ar[r]^{\id\times E_{\omega,f}}&\vinp{Z}\times\vinSup{\omega}\ar@{=}[d]
\\
&\vinp{Z}\times\vSup{\psi}\ar[d]_{\id\times S'_\psi}&&\vinp{Z}\times\vinDem{\phi}\times\vDel{\psi}\ar[r]^{\id\times E_{\phi,f}\times\delta^1_\psi}&\vinp{Z}\times\vinSup{\phi}\times\vDel{\psi}\ar[dd]_{\id\times S''_\phi\times\id}
\\
&\vinp{Z}\times\vinDem{\psi}\ar@{=}[r]\ar[d]_{\id\times E_{\psi,g}}&\vinp{Z}\times\vinp{Y}\times\vDel{\psi}\ar[r]^{\id\times C_{\phi,f}\times\id}&\vinp{Z}\times\vSup{\phi}\times\vDel{\psi}\ar[u]^{\id\times S'_\phi\times \id}&
\\
\vSup{\psi}\ar@{=}[r]&\vinp{Z}\times\vinSup{\psi}\ar@{=}[rrr]&&&\vinp{Z}\times\voutp{Y}\times\vDel{\psi}
}
$$

 The top square and left square commute by definition of $C_{\omega,f}$ and $C_{\psi,f}$ respectively, see (\ref{dia:helper functions}). The square $E_{\omega,f}=E_{\phi,f}\times\delta^1_\psi$ commutes also by definition (\ref{dia:helper functions}). The commutativity of the bottom-right corner of the diagram translates into the following identity between functions $\vinDem{\psi}\to\voutp{Y}\times\vDel{\psi}$:
\[E_{\psi,\mcP(\phi)(f)}=(S''_\phi\times\id)\circ (E_{\phi,f}\times\delta^1_\psi)\circ (S'_\phi\times\id)\circ (C_{\phi,f}\times\id).\]
 But this is a direct consequence of the definitions $E_{\psi,\mcP(\phi)(f)}=\mcP(\phi)(f)\times\delta^1_\psi$ and $\mcP(\phi)(f)=S''_\phi\circ E_{\phi,f}\circ S'_\phi\circ C_{\phi,f}$.  It follows that the outer square commutes.

 \end{proof}
 
  \begin{lemma}\label{lem:production lemma 2}
   If we assume that equation (\ref{dia:induction hypothesis}) holds, then so does the following equality of functions $\vinp{Z}\to\vinSup{\phi}$:\;
   \ffootnote{-2pt}
   {It is possible for one to draw a diagram representing this equation as we did in the preceding lemma, however we did not find such a diagram enlightening in this case.}
   \begin{align}\label{dia:what we want}
   \pi_{\vinSup{\phi}}\circ C_{\omega,f}=\pi_{\vinSup{\phi}}\circ C_{\phi,f}\circ\pi_{\vinp{Y}}\circ S'_\psi\circ C_{\psi,g}.
   \end{align}
  \end{lemma}
  \begin{proof}
\comment{gross picture
$$
\scriptsize\xymatrix@=13pt{
\vinp{Z}\ar[dr]^{(\id\times\partial)\circ\Delta}\ar[rrrr]^{C_{\omega,f}}\ar[dddd]_{C_{\psi,g}}&&&&\vSup{\omega}\ar@{=}[d]
\\
&\vinp{Z}\times\vinp{Z}\ar[r]^{\id\times C_{\omega,f}}\ar[d]_{\id\times C_{\psi,g}}&\vinp{Z}\times\vSup{\omega}\ar[r]^{\id\times S'_\omega}&\vinp{Z}\times\vinDem{\omega}\ar@{=}[d]\ar[r]^{\id\times E_{\omega,f}}&\vinp{Z}\times\vinSup{\omega}\ar@{=}[d]
\\
&\vinp{Z}\times\vSup{\psi}\ar[d]_{\id\times S'_\psi}&&\vinp{Z}\times\vinDem{\phi}\times\vDel{\psi}\ar[r]^{\id\times E_{\phi,f}\times\delta^1_\psi}&\vinp{Z}\times\vinSup{\phi}\times\vDel{\psi}\ar[dd]_{\pi}
\\
&\vinp{Z}\times\vinDem{\psi}\ar@{=}[r]\ar[d]_{\id\times E_{\psi,g}}&\vinp{Z}\times\vinp{Y}\times\vDel{\psi}\ar[r]^{\id\times C_{\phi,f}\times\id}&\vinp{Z}\times\vSup{\phi}\times\vDel{\psi}\ar[u]^{\id\times S'_\phi\times \id}\ar[dl]_{\pi}&
\\
\vSup{\psi}\ar@{=}[r]\ar[d]^{S'_\psi}&\vinp{Z}\times\vinSup{\psi}&\vSup{\phi}\ar[r]^{S'_\phi}&\vinDem{\phi}\ar[r]^{E_{\phi,f}}&\vinSup{\phi}
\\
\vinDem{\psi}\ar[d]_{\pi}&\vinp{Y}\times\vinp{Y}\ar[r]^{\id\times C_{\phi,f}}&\vinp{Y}\times\vSup{\phi}\ar[u]_{\pi}\ar[r]^{\id\times S'_\phi}&\vinp{Y}\times\vinDem{\phi}\ar[r]^{\id\times E_{\phi,f}}&\vinp{Y}\times\vinSup{\phi}\ar@{=}[d]\ar[u]_{\pi}
\\
\vinp{Y}\ar[ur]_{(\id\times\partial)\circ\Delta}\ar[rrrr]_{C_{\phi,f}}&&&&\vSup{\phi}
}
$$}
  
   We will use the following three ``forgetful" equations,
   \begin{align}
    \label{dia:forgetful1}E_{\phi,f}\circ\pi_{\vinDem{\phi}}\circ S'_\omega&=\pi_{\vinSup{\phi}}\circ(E_{\phi,f}\times\delta_\psi^1)\circ S'_\omega,\\\label{dia:forgetful2}
    \pi_{\vinSup{\phi}}\circ E_{\omega,f}\circ S'_\omega\circ C_{\omega,f}\circ\partial&=\pi_{\vinSup{\phi}}\circ\big(\id_{\vinp{Z}}\times(E_{\omega,f}\circ S'_\omega\circ C_{\omega,f}\circ\partial)\big)\circ\Delta,\\\label{dia:forgetful3}
    E_{\phi,f}\circ S'_\phi\circ C_{\phi,f}\circ\partial&=\pi_{\vinSup{\phi}}\circ\big(\id_{\vinp{Y}}\times(E_{\phi,f}\circ S'_\phi\circ C_{\phi,f}\circ\partial)\big)\circ\Delta,\\\label{dia:forgetful4}
    S'_\phi\circ C_{\phi,f}\circ\pi_{\inp{Y}}&=\pi_{\vinDem{\phi}}\circ (S'_\phi\times\id)\circ(C_{\phi,f}\times\id).  
   \end{align}
    which are ``obvious" in the sense that they are simply a matter of tracking coordinate projections.
   The proof will go as follows. We apply $E_{\phi,f}\circ\pi_{\vinDem{\psi}}$ to both sides of the assumed equality (\ref{dia:induction hypothesis}) and simplify. On the left-hand side we use (\ref{dia:forgetful1}) then the fact that by definition we have 
   \begin{align}\label{dia:E fact}
   E_{\omega,f}=E_{\phi,f}\times\delta^1_\psi,
   \end{align}
  then (\ref{dia:forgetful2}), then the definition of $C_{\omega,f}$ which we reproduce here:
   \begin{align}\label{dia:P fact}
   C_{\omega,f}=\big(\id_{\vinp{Z}}\times(E_{\omega,f}\circ S'_\omega\circ C_{\omega,f}\circ\partial)\big)\circ\Delta
   \end{align}
   to obtain the following equality of functions $\vinp{Z}\to\vinSup{\phi}$:
   \begin{align*}
    &E_{\phi,f}\circ\pi_{\vinDem{\phi}}\circ S'_\omega\circ C_{\omega,f}\circ\partial\\
    &\quad=^{(\ref{dia:forgetful1})}\pi_{\vinSup{\phi}}\circ(E_{\phi,f}\times\delta_\psi^1)\circ S'_\omega\circ C_{\omega,f}\circ\partial\\
    &\quad=^{(\ref{dia:E fact})}\pi_{\vinSup{\phi}}\circ E_{\omega,f}\circ S'_\omega\circ C_{\omega,f}\circ\partial\\
    &\quad=^{(\ref{dia:forgetful2})}\pi_{\vinSup{\phi}}\circ \big(\id_{\vinp{Z}}\times(E_{\omega,f}\circ S'_\omega\circ C_{\omega,f}\circ\partial)\big)\circ\Delta\\
    &\quad=^{(\ref{dia:P fact})}\pi_{\vinSup{\phi}}\circ C_{\omega,f}.\\
   \end{align*}
   On the right hand side we use (\ref{dia:forgetful4}), then commute the $\partial$, then apply (\ref{dia:forgetful3}),
   and then the definition of $C_{\phi,f}$ which we reproduce here:
   \begin{align}\label{dia:P fact 2}
   C_{\phi,f}=\big(\id_{\vinp{Y}}\times(E_{\phi,f}\circ S'_\phi\circ C_{\phi,f}\circ\partial)\big)\circ\Delta
   \end{align}
   to obtain the following equality of functions $\vinp{Z}\to\vinSup{\phi}$:
   \begin{align*}
    \quad&E_{\phi,f}\circ\pi_{\vinDem{\phi}}\circ (S'_\phi\times\id)\circ (C_{\phi,f}\times\id)\circ S'_\psi\circ C_{\psi,g}\circ\partial\\
    &\quad=^{(\ref{dia:forgetful4})}E_{\phi,f}\circ S'_\phi\circ C_{\phi,f}\circ\pi_{\vinp{Y}}\circ S'_\psi\circ C_{\psi,g}\circ\partial\\
    &\quad=E_{\phi,f}\circ S'_\phi\circ C_{\phi,f}\circ\partial\circ\pi_{\vinp{Y}}\circ S'_\psi\circ C_{\psi,g}\\\
    &\quad=^{(\ref{dia:forgetful3})}\pi_{\vinSup{\phi}}\circ \big(\id_{\vinp{Y}}\times(E_{\phi,f}\circ S'_\phi\circ C_{\phi,f}\circ\partial)\big)\circ\Delta\circ\pi_{\vinp{Y}}\circ S'_\psi\circ C_{\psi,g}\\
    &\quad=^{(\ref{dia:P fact 2})}\pi_{\vinSup{\phi}}\circ C_{\phi,f}\circ\pi_{\vinp{Y}}\circ S'_\psi\circ C_{\psi,g}.\\
   \end{align*}
   Combining these computations with the induction hypothesis (\ref{dia:induction hypothesis}) gives the result:
   \begin{align*}
    \pi_{\vinSup{\phi}}\circ C_{\omega,f}
    &=E_{\phi,f}\circ\pi_{\vinDem{\phi}}\circ S'_\omega\circ C_{\omega,f}\circ\partial\\
    &=E_{\phi,f}\circ\pi_{\vinDem{\phi}}\circ (S'_\phi\times\id)\circ (C_{\phi,f}\times\id)\circ S'_\psi\circ C_{\psi,g}\circ\partial\\
    &=\pi_{\vinSup{\phi}}\circ C_{\phi,f}\circ\pi_{\vinp{Y}}\circ S'_\psi\circ C_{\psi,g}.
   \end{align*}
  \end{proof}
  
  \begin{lemma}[Main Induction Step]\label{lem:induction step}
   If we assume that equation (\ref{dia:induction hypothesis}), reproduced here
   \begin{align}\tag{\ref{dia:induction hypothesis}}
    S'_\omega\circ C_{\omega,f}\circ\partial=(S'_\phi\times\id)\circ (C_{\phi,f}\times\id)\circ S'_\psi\circ C_{\psi,g}\circ\partial,
   \end{align}
   holds, then equation (\ref{dia:induction hypothesis}) holds without the precomposed $\partial$, i.e. we have the following equality of functions $\vinp{Z}\to \vinDem{\omega}$:
   \begin{align*}
    S'_\omega\circ C_{\omega,f}=(S'_\phi\times\id)\circ (C_{\phi,f}\times\id)\circ S'_\psi\circ C_{\psi,g}.
   \end{align*}
  \end{lemma}
  \begin{proof}
   To keep the notation from becoming too cluttered we adopt the following convention: an identity map written as the right hand term of a product will always mean $\id_{\vDel{\psi}}$, while an identity map written as the left hand term of a product will mean one of $\id_{\vinp{Y}}$, $\id_{\voutp{Y}}$, $\id_{\vinp{Z}}$, or $\id_{\voutp{Z}}$, which one should be clear from the context.

 The proof will be by cases, we show for each $j\in\inDem{\omega}$ that 
  \[\pi_j\circ (S'_\phi\times\id\big)\circ (C_{\phi,f}\times\id)\circ S'_\psi\circ C_{\psi,g}=\pi_j\circ S'_\omega\circ C_{\omega,f},\]
  i.e. we show that the two ways of producing internal demand streams agree by checking wire by wire. Since $\inDem{\omega}=\inDem{\phi}\amalg\Del{\psi}$, there are three main cases to consider: $j\in\Del{\psi}$, $j\in\inDem{\phi}$ with $s_\phi(j)\in\inp{Y}$, and $j\in\inDem{\phi}$ with $s_\phi(j)\in\inSup{\phi}$. We go through these in turn below.  Most of the necessary equalities will use that shuttling streams between outputs and inputs does not change the value stream.
 \begin{enumerate}
  \item Suppose $j\in\Del{\psi}$.  We use Lemma~\ref{lem:production lemma 1} and the fact that the right hand identity maps are $\id_{\vDel{\psi}}$ to see 
  \begin{align*}
   &\pi_j\circ (S'_\phi\times\id)\circ (C_{\phi,f}\times\id)\circ S'_\psi\circ C_{\psi,g}\\
   &\quad=^{(\ref{eq:cascade equality})}\pi_j\circ (S'_\phi\times\id)\circ (C_{\phi,f}\times\id)\circ S'_\psi\circ(\id\times S''_\phi\times\id)\circ C_{\omega,f}\\
   &\quad=\pi_j\circ S'_\psi\circ(\id\times S''_\phi\times\id)\circ C_{\omega,f}\\
   \tag{$*$}&\quad=\pi_{s_\psi(j)}\circ(\id\times S''_\phi\times\id)\circ C_{\omega,f}.
  \end{align*}
  Now there are two cases depending on what has supplied wire $j$.
  \begin{itemize}
   \item Suppose $s_\psi(j)\in\inp{Z}\amalg\Del{\psi}$.  Notice that in this case \eqref{dia:composition in W} gives $s_\psi(j)=s_\omega(j)$.  Then ($*$) above becomes
   \begin{align*}
    \pi_{s_\psi(j)}\circ(\id_{\vinp{Z}}\times S''_\phi\times\id_{\vDel{\psi}})\circ C_{\omega,f}
    &=\pi_{s_\psi(j)}\circ C_{\omega,f}=\pi_{s_\omega(j)}\circ C_{\omega,f}\\
    &=\pi_j\circ S'_\omega\circ C_{\omega,f}.\\
   \end{align*}
   \item Suppose $s_\psi(j)\in\outp{Y}$.  In this case \eqref{dia:composition in W} gives $s_\phi\circ s_\psi(j)=s_\omega(j)$.  Because $S''_\phi=\pi_{s_\phi\big|_{\outp{Y}}}\taking\vinSup{\phi}\to\voutp{Y}$, we see that  ($*$) simplifies as
   \begin{align*}
    \pi_{s_\psi(j)}\circ(\id\times S''_\phi\times\id)\circ C_{\omega,f}
    &=\pi_{s_\psi(j)}\circ S''_\phi\circ\pi_{\vinSup{\phi}}\circ C_{\omega,f}\\
    &=\pi_{s_\phi\circ s_\psi(j)}\circ C_{\omega,f}=\pi_{s_\omega(j)}\circ C_{\omega,f}\\
    &=\pi_j\circ S'_\omega\circ C_{\omega,f}.\\
   \end{align*}
  \end{itemize}
  
  \item Suppose $j\in\inDem{\phi}$ and $s_\phi(j)\in\inp{Y}$.  We will use Lemma~\ref{lem:production lemma 1} and the equation
  \[\pi_j\circ (S'_\phi\times\id)=\pi_{s_\phi(j)}.\]
  We will also use the fact that $\pi_{s_\phi(j)}\circ (C_{\phi,f}\times\id)=\pi_{s_\phi(j)}$, which holds because $s_\phi(j)\in\inp{Y}$ and $C_{\phi,f}$ is the identity on $\vinp{Y}$. With these in hand we compute:
  \begin{align*}
   &\pi_j\circ (S'_\phi\times\id)\circ (C_{\phi,f}\times\id)\circ S'_\psi\circ C_{\psi,g}\\
   &\quad=^{(\ref{eq:cascade equality})}\pi_j\circ (S'_\phi\times\id)\circ (C_{\phi,f}\times\id)\circ S'_\psi\circ(\id\times S''_\phi\times\id)\circ C_{\omega,f}\\
   &\quad=\pi_{s_\phi(j)}\circ (C_{\phi,f}\times\id)\circ S'_\psi\circ(\id\times S''_\phi\times\id)\circ C_{\omega,f}\\
   &\quad=\pi_{s_\phi(j)}\circ S'_\psi\circ(\id\times S''_\phi\times\id)\circ C_{\omega,f},\\
   \tag{$**$}&\quad=\pi_{s_\psi\circ s_\phi(j)}\circ(\id\times S''_\phi\times\id)\circ C_{\omega,f},\\
  \end{align*}
  There are again two cases to consider depending on what has supplied wire $j$: 
 \begin{itemize}
  \item Suppose $s_\psi\circ s_\phi(j)\in\inp{Z}\amalg\Del{\psi}$.  Then we get 
  \[\pi_{s_\psi\circ s_\phi(j)}\circ(\id_{\vinp{Z}}\times S''_\phi\times\id_{\vDel{\psi}})=\pi_{s_\psi\circ s_\phi(j)}.\]
  Now \eqref{dia:composition in W} implies the identity $s_\psi\circ s_\phi(j)=s_\omega(j)$ and thus ($**$) becomes
  \begin{align*}
   \pi_{s_\psi\circ s_\phi(j)}\circ(\id\times S''_\phi\times\id)\circ C_{\omega,f}
   &=\pi_{s_\psi\circ s_\phi(j)}\circ C_{\omega,f}=\pi_{s_\omega(j)}\circ C_{\omega,f}\\
   &=\pi_j\circ S'_\omega\circ C_{\omega,f}.\\
  \end{align*}
  \item Suppose $s_\psi\circ s_\phi(j)\in\outp{Y}$.  Then notice that by \eqref{dia:composition in W} we have $s_\omega(j)=s_\phi\circ s_\psi\circ s_\phi(j)$ and ($**$) simplifies as
  \begin{align*}
   \pi_{s_\psi\circ s_\phi(j)}\circ(\id\times S''_\phi\times\id)\circ C_{\omega,f}
   &=\pi_{s_\psi\circ s_\phi(j)}\circ S''_\phi\circ\pi_{\vinSup{\phi}}\circ C_{\omega,f}\\
   &=\pi_{s_\phi\circ s_\psi\circ s_\phi(j)}\circ C_{\omega,f}=\pi_{s_\omega(j)}\circ C_{\omega,f}\\
   &=\pi_j\circ S'_\omega\circ C_{\omega,f}.\\
  \end{align*}
 \end{itemize}
 
  \item Suppose $j\in\inDem{\phi}$ and $s_\phi(j)\in\inSup{\phi}$.  As usual we have $\pi_j\circ S'_\phi=\pi_{s_\phi(j)}$, but noting that $\vset_j=\vset_{s_\phi(j)}$, the assumptions on $j$ imply that we have
  \[\pi_j\circ S'_\phi=\pi_{s_\phi(j)}\circ\pi_{\vinSup{\phi}}.\]
  In this case \eqref{dia:composition in W} gives $s_\omega(j)=s_\phi(j)$ and thus by Lemma \ref{lem:production lemma 2}, 
 \begin{align*}
  &\pi_j\circ (S'_\phi\times\id)\circ (C_{\phi,f}\times\id)\circ S'_\psi\circ C_{\psi,g}\\
  &\quad=\pi_j\circ S'_\phi\circ C_{\phi,f}\circ \pi_{\vinp{Y}}\circ S'_\psi\circ C_{\psi,g}\\
  &\quad=\pi_{s_\phi(j)}\circ\pi_{\vinSup{\phi}}\circ C_{\phi,f}\circ \pi_{\vinp{Y}}\circ S'_\psi\circ C_{\psi,g}\\
  &\quad=^{(\ref{dia:what we want})}\pi_{s_\phi(j)}\circ\pi_{\vinSup{\phi}}\circ C_{\omega,f}\\
  &\quad=\pi_{s_\omega(j)}\circ C_{\omega,f}\\
  &\quad=\pi_j\circ S'_\omega\circ C_{\omega,f}.\\
  \end{align*}
  
 \end{enumerate}
  \end{proof}
  
  To complete the proof of Theorem~\ref{th:main} recall that we have been given morphisms $\phi\taking X\to Y$ and $\psi\taking Y\to Z$ and $\omega=\psi\circ\phi$ in $\mcW$ with notation as in Announcement \ref{ann:composition in W}.  These have corresponding supplier assignments $s_\phi, s_\psi$, and $s_\omega$. Abbreviate $g=\mcP(\phi)(f)\taking\vinp{Y}\to\voutp{Y}$. Consider the following diagram of sets:

$$\xymatrix{
\vinSup{\psi}\ar[rr]^{S''_\psi}&&\voutp{Z}&&\vinSup{\omega}\ar[ll]_{S''_\omega}
\\%
&\voutp{Y}\times\vDel{\psi}\ar@{=}[ul]&&\vinSup{\phi}\times\vDel{\psi}\ar[ll]_{S''_\phi\times\id}\ar@{=}[ur]
\\%
\vinDem{\psi}\ar[uu]_{E_{\psi,g}}\ar@{=}[dr]&&&&\vinDem{\omega}\ar[uu]^{E_{\omega,f}}
\\%
&\vinp{Y}\times\vDel{\psi}\ar[uu]^{g\times\delta^1_\psi}\ar[r]^{C_{\phi,f}\times\id}&\vSup{\phi}\times\vDel{\psi}\ar[r]^-{S'_\phi\times\id}&\vinDem{\phi}\times\vDel{\psi}\ar[uu]_{E_{\phi,f}\times\delta^1_\psi}\ar@{=}[ru]
\\%
\vSup{\psi}\ar[uu]_{S'_\psi}&&\vinp{Z}\ar[ll]_{C_{\psi,g}}\ar[rr]^{C_{\omega,f}}&&\vSup{\omega}\ar[uu]^{S'_\omega}
}
$$

Recall that our goal was to show that the outermost square commutes.  We will see that each inner square is commutative in the sense that the following equations hold:
\begin{align*}
S''_\omega=S''_\psi\circ(S''_\phi\times\id)&\taking\vinSup{\phi}\times\vDel{\psi}\too\voutp{Z}
\\%
E_{\psi,g}=g\times\delta^1_\psi&\taking \vinDem{\psi}\too\voutp{Y}\times\vDel{\psi}
\\%
g\times\delta^1_\psi=(S''_\phi\times\id)\circ(E_{\phi,f}\times\delta^1_\psi)\circ(S'_\phi\times\id)\circ(C_{\phi,f}\times\id)&\taking\vinp{Y}\times\vDel{\psi}\to\voutp{Y}\times\vDel{\psi}
\\%
E_{\phi,f}\times\delta^1_\psi=E_{\omega,f}&\taking\vinDem{\phi}\times\vDel{\psi}\too\vinSup{\omega}
\\%
S'_\omega\circ C_{\omega,f}=(S'_\phi\times\id)\circ (C_{\phi,f}\times\id)\circ S'_\psi\circ C_{\psi,g}&\taking\vinp{Z}\too\vinDem{\omega}
\\%
\end{align*}\normalsize

The first follows from Lemma \ref{lemma:pushout lemma}, especially (\ref{dia:pushout morphisms}), and Announcement \ref{ann:composition in W}, especially (\ref{dia:composition in W}). The next three follow directly from definitions (\ref{dia:helper functions}).  The last equality has been proven in Lemma~\ref{lem:induction step}.

It follows that the equation below holds for functions $\vinp{Z}\too\voutp{Z}$:
$$\mcP(\omega)(f)=S''_\omega\circ E_{\omega,f}\circ S'_\omega\circ C_{\omega,f}=S''_\psi\circ E_{\psi,g}\circ S'_\psi\circ C_{\psi,g}=\mcP(\psi)(g)=\mcP(\psi)\circ\mcP(\phi)(f)
$$
Indeed, the left-hand equality and the second-to-last equality are by definition of $\mcP$ on morphisms, as given in (\ref{dia:formula for P on morphisms}). The second equality is found by a diagram chase using the six equations above.   
\end{proof}

\section{Future work}\label{sec:future work}

The authors hope that this work can be put to use rather directly in modeling and design applications. The relationship between the operad $\mcW$ and its algebra $\mcP$ is quite explicitly a relationship between form and function. The ability to zoom in and out, i.e. to change levels of abstraction with ease is a facility which we believe is essential to any good theory of the brain, computer programs, cyber-physical systems, etc. 

Below we will discuss some possibilities for future work. We see three major directions in which to go. The first is to connect this work to other work on wiring diagrams. The second is to consider applications, e.g. to computer science and cognitive neuroscience. The third is to investigate the notion of dependency, or {\em cause and effect}, in our formalism. We discuss these in turn below.

\subsection{Connecting to other work on wiring diagrams}

While wiring diagrams have been useful in engineering for many years, there are a few mathematical approaches that should connect to our own, including \cite{AADF}, \cite{BB}, \cite{DL}, and \cite{Sp2}. 

The work by \cite{AADF} studies dynamics inside of strongly connected (transitive) networks of identical units.  Their main aim is to relate the dynamics on the network to properties of the underlying network architecture.  The underlying network should be viewed as analogous to a morphism $\psi$ in $\mcW$, while the dynamics lying over the network should be viewed as analogous to the morphism $\mcP(\psi)$.  The cells in their networks are considered to have internal states which collude with the inputs to produce the output of a cell.  There exists an algebra over $\mcW$ of ``propagators with internal states" and a retract from this algebra to $\mcP$, which should allow the transfer of results of \cite{AADF} to our framework.  Arguably one of the main aims of \cite{AADF} is to introduce a notion of inflation for these networks.  A careful comparison, see for example \cite[Figure 15]{AADF} and \cite[Figure 29]{AADF}, reveals that their inflation procedure is a special case of the composition of morphisms in $\mcW$ where the black boxes being inserted into a wiring diagram come from a special class called inflations.

In \cite{BB}, the authors investigate reaction networks and in particular stochastic Petri nets. There, various species (e.g. chemicals or populations) interact in prescribed ways, and the dynamics of their changing populations are studied. A similar but more complex situation is studied in \cite{DL}. Both of these papers work with continuous time processes, whereas we work with discrete time processes. Still, we plan to investigate the relationship between these ideas in the future.

The only other place, other than the present paper, where operads are explicitly mentioned in the context of wiring diagrams seems to be \cite{Sp2}, where the author studies systems of interacting relations using an operad $\mcT$. One might think that an operad functor would appropriately relate it to the present operad $\mcW$, but that does not appear to be the case because of the delay nodes that exist in $\mcW$ but not $\mcT$. Instead, these two operads need to be compared via a third, in which delay nodes do not occur, but wires are still directed. We hope to make this precise in the future.

\subsection{Applications, e.g. to computer science and cognitive neuroscience}

The authors' primary purposes in the above work was to formalize what we considered fundamental principles in the relation of form and function in both computers and brains. On the operad/form level we are speaking of hierarchical chunking; on the algebra/function level we are speaking of historical propagators. 

One can ask several interesting questions at this point. For example, can we create from $\mcW$ and $\mcP$ a viable computer programming language? We would hope that the propagators given by {\em computable functions} are closed in, i.e. form a subalgebra of, $\mcP$. But perhaps one could ask for more as well. For example, if each transistor in a computer acts like a NOR gate, one could ask whether or not the subalgebra generated by NOR gates is Turing complete. We conjecture that something like this is true. If so, we believe our language will provide a simple, grounded, and useful perspective on the actual operation of computers. 

There are also many interesting questions on the neuroscience side that motivated this work.  These essentially amount to a question of ``what".  What is a neuron?  What is a brain?  What is the relationship between the actions of individual neurons and the brain as a whole?  It is easy to imagine that a neuron is simply a black box where we assign certain multisets of neurotransmitters to each input and output, the historical propagators would then record activity patterns of discretized neurons.  If this turns out to be the case then the distinction between neuron and brain becomes blurred, each is simply a black box with some specified inputs and outputs.  From this perspective the questions of how the activity of individual neurons relates to the activity of a functional brain region or of the entire brain becomes subsumed by the operad formalism where we can think of each as a different choice of chunking within a single (massively complex) wiring diagram representing the connections occurring within an entire brain.  Deep questions regarding precisely how the actions of neurons in one part of the brain influence the activity in other areas will rely on the work of neuroscientists' understanding of the precise wiring pattern of the brain and remain to be understood.  We will speak more on these questions of dependency within our formalism in the next section.

\subsection{Investigating the notion of dependency}

Given a propagator with $m$-inputs and $n$-outputs, one may ask about the relation of {\em dependency} between them. When one says that the outcome of a process is dependent on the inputs, this should mean that changing the inputs will cause a change in the outputs. 

In one form or another, the ability to track changes as they propagate through a network of processes is one of the basic questions in almost any field of research. Indeed, concern with notions of {\em cause and effect} is an essential characteristic of human thought. Making mathematical sense of this notion would presumably be immensely valuable. In particular, it should have direct applications to neuroscience and computer programming disciplines. 

It is not clear that there exists a reasonable notion of causality that is {\em algebraic} in nature, i.e. one that can be formulated as a $\mcW$-algebra receiving a morphism from $\mcP$. In that case we may look to other approaches, e.g. that of Bayesian networks as in \cite{Pea} and \cite{Fon}. Whether Bayesian networks also form an algebra on $\mcW$ or a related operad, and how such an algebra compares with $\mcP$ should certainly be investigated.

\bibliographystyle{amsalpha}

\end{document}